\numberwithin{equation}{section}
\newtheoremstyle{mytheorem}
{}
{}
{\it}
{\parindent}
{\bf}
{.}
{ }
{\thmnumber{#2.~}\thmname{#1}\thmnote{~\rm#3}}
\newtheoremstyle{myremark}
{}
{}
{\rm}
{\parindent}
{\bf}
{.}
{ }
{\thmnumber{#2.~}\thmname{#1}\thmnote{~\rm#3}}
\newtheoremstyle{myparagraph}
{}
{}
{\rm}
{\parindent}
{\bf}
{.}
{ }
{\thmnumber{#2.~}\thmname{#1}\thmnote{#3}}
\theoremstyle{mytheorem}
\newtheorem{theorem}[subsection]{Theorem}
\newtheorem{definition}[subsubsection]{Definition}
\newtheorem{lemma}[subsection]{Lemma}
\newtheorem{corollary}[subsection]{Corollary}
\newtheorem{proposition}[subsection]{Proposition}
\newtheorem{question}[subsection]{Question}
\theoremstyle{myremark}
\newtheorem{remark}[subsection]{Remark}
\theoremstyle{myparagraph}
\newtheorem*{parag*}{}
\def\@secnumfont{\sc}
\def\section{\@startsection{section}{1}%
\z@{1.5\linespacing\@plus .2\linespacing}{.7\linespacing}%
{\normalfont\sc\centering}}
\def\ps@headings{\ps@empty
 \def\@evenhead{%
  \setTrue{runhead}%
  \normalfont\footnotesize
  \rlap{\thepage}\hfil
  \def\thanks{\protect\thanks@warning}%
  \leftmark{}{}\hfil}%
 \def\@oddhead{%
  \setTrue{runhead}%
  \normalfont\footnotesize\hfil
  \def\thanks{\protect\thanks@warning}%
  \rightmark{}{}\hfil \llap{\thepage}}%
\let\@mkboth\markboth}
\renewenvironment{proof}[1][\proofname]{\par
  \pushQED{\qed}%
  \normalfont \topsep6\p@\@plus6\p@\relax
  \trivlist
  \itemindent\normalparindent
  \item[\hskip\labelsep
    \bfseries
    #1\@addpunct{.}]\ignorespaces
}{%
  \popQED\endtrivlist\@endpefalse
}
\providecommand{\proofname}{Proof}
\newcommand{\Flat}{\mathbb{F}}
\newcommand{\Mass}{\mathbb{M}}
\newcommand{\TP}{\textbf{TP}}
\newcommand{\OTP}{\textbf{OTP}}
\newcommand{\R}{\mathbb{R}}
\newcommand{\MM}{\mathbb{M}^\alpha}
\newcommand{\N}{\mathbb{N}}
\newcommand{\D}{\mathscr{D}}
\newcommand{\Haus}{\mathscr{H}}
\newcommand{\Leb}{\mathscr{L}}
\newcommand{\M}{\mathscr{M}_+}
\newcommand{\Lip}{\mathrm{Lip}}
\newcommand{\supp}{\mathrm{supp}}
\newcommand{\e}{\varepsilon}
\newcommand{\dV}{d_V\kern-1pt}
\newcommand{\trait}[3]{\vrule width #1ex height #2ex depth #3ex}
\newcommand{\trace}{\mathchoice%
  {\mathbin{\trait{.12}{1.2}{.03}\trait{.8}{0.09}{0.03}}}
  {\mathbin{\trait{.12}{1.2}{.03}\trait{.8}{0.09}{0.03}}}
  {\mathbin{\hskip.15ex\trait{.09}{.84}{0.02}\trait{.56}{.07}{.02}}\hskip.15ex}
  {\mathbin{\trait{.07}{.6}{.01}\trait{.4}{.06}{.01}}}}
\begin{document}

	%
\pagestyle{empty}
\pagestyle{myheadings}
\markboth%
{\underline{\centerline{\hfill\footnotesize%
\textsc{Maria Colombo, Antonio De Rosa and Andrea Marchese}%
\vphantom{,}\hfill}}}%
{\underline{\centerline{\hfill\footnotesize%
\textsc{Stability of optimal traffic paths}%
\vphantom{,}\hfill}}}

	%
\thispagestyle{empty}

~\vskip -1.1 cm

	%

\vspace{1.7 cm}

	%
{\large\bf\centering
Improved stability of optimal traffic paths\\
}

\vspace{.6 cm}

	%
\centerline{\sc Maria Colombo, Antonio De Rosa and Andrea Marchese}

\vspace{.8 cm}

{\rightskip 1 cm
\leftskip 1 cm
\parindent 0 pt
\footnotesize

	%
{\sc Abstract.}
Models involving branched structures are employed to describe several supply-demand systems such as the structure of the nerves of a leaf, the system of roots of a tree and the nervous or cardiovascular systems. 
Given a flow (traffic path) that transports a given measure $\mu^-$ onto a target measure $\mu^+$, along a 1-dimensional network, the transportation cost per unit length is supposed in these models to be proportional to a concave power $\alpha \in (0,1)$ of the intensity of the flow.

In this paper we address an open problem in the book \emph{Optimal transportation networks} by Bernot, Caselles and Morel and we improve the stability for optimal traffic paths in the Euclidean space $\mathbb{R}^d$, with respect to variations of the given measures $(\mu^-,\mu^+)$, which was known up to now only for $\alpha>1-\frac1d$. 
 We prove it for exponents $\alpha>1-\frac1{d-1}$ (in particular, for every $\alpha \in (0,1)$ when $d=2$), for a fairly large class of measures $\mu^+$ and $\mu^-$.

\par
\medskip\noindent
{\sc Keywords: } Transportation network, Branched transport, Irrigation problem, Traffic path, Stability.

\par
\medskip\noindent
{\sc MSC :} 49Q20, 49Q10.
\par
}

\tableofcontents

\section{Introduction}
The branched transport problem is a variant of the classical Monge-Kantorovich problem, where the cost of the transportation does not depend only on the initial and the final spatial distribution of the mass that one wants to transfer, but also on the paths along which the mass particles move. It was introduced to model systems which naturally show ramifications, such as roots systems of trees and leaf ribs, the nervous, the bronchial and the cardiovascular systems, but also to describe other supply-demand distribution networks, like irrigation networks, electric power supply, water distribution, etc. In all of the many different formulations of the problem, the main feature is the fact that the cost functional is designed in order to privilege large flows and to prevent diffusion; indeed the transport actually happens on a 1-dimensional network.

To translate this principle in mathematical terms, one can consider costs which are proportional to a power $\alpha \in (0, 1)$ of the flow. Roughly speaking, it is preferable to transport two positive masses $m_1$ and $m_2$ together, rather than separately, because $(m_1+m_2)^\alpha<m_1^\alpha+m_2^\alpha.$
Obviously the smaller is $\alpha$ and the stronger is the grouping effect.

Different costs and descriptions have been introduced in order to model such problem: one of the first proposals came by Gilbert in \cite{Gilbert}, who considered finite directed weighted graphs $G$ with straight edges $e\in E(G)$ ``connecting'' two discrete measures, and a weight function $w : E(G) \to (0, \infty)$.  The cost of $G$ is defined to be:
\begin{equation}\label{e:gilbertenergy}
\sum_{e\in E(G)} w(e)^\alpha \Haus^1(e),
\end{equation}
where we denoted by $\Haus^1$ the $1$-dimensional Hausdorff measure. Later Xia has extended this model to a continuous framework using Radon vector-valued measures, or, equivalantly, 1-dimensional currents, called in this context ``traffic paths'' (see \cite{Xia}). 

In \cite{MSM,BCM1}, new objects called ``traffic plans'' have been introduced and studied. Roughly speaking, a traffic plan is a measure
on the set of Lipschitz paths, where each path represents the trajectory of a single particle. All these formulations were proved to be equivalent (see \cite{BCM} and references therein) and in particular the link between the last two of them is encoded in a deep result, due to Smirnov, on the structure of acyclic, normal 1-dimensional currents (see Theorem~\ref{s-decompcurr}). 

A rich variety of branched transportation problems can be described through these objects: in all of them existence \cite{Xia,MSM,BCM1,BeCaMo,brabutsan,Pegon} and (partially) regularity theory \cite{xia2,MR2250166,DevSol,DevSolElementary,morsant,xiaBoundary,BraSol} are well-established. It is, instead, a challenging problem to perform numerical simulations.

The main reference on the topic is the book \cite{BCM}, which is an almost up-to-date overview on the results in the field. To witness the current research activity on this topic we refer also to the recent works 
 \cite{marmass}, where currents with coefficients in a normed group are used to propose a rephrasing of the discrete problem which could be considered as a convex problem, to \cite{BranK}, which proves the equivalence of several formulations of the urban planning model, including two different regimes of transportation and to \cite{BranRS}, which provides a new convexification of the 2-dimensional problem, used to perform numerical simulations.
 
Other techniques have been recently introduced, with the aim to tackle this and similar problems numerically. For instance \cite{OuSan} provides a Modica-Mortola-type approximation of the branched transportation problem and in \cite{BCF} the authors introduce a family of approximating energies, modeled on the Ambrosio-Tortorelli functional (see also \cite{BLS}). Numerical simulations with a different aim are implemented in the recent works \cite{massoubo} and \cite{BOO}. Here the novel formulations of the Steiner-tree problem and the Gilbert-Steiner problem, introduced in \cite{marmass1} and \cite{marmass}, are exploited to find numerical calibrations: functional-analytic tools which can be used to prove the minimality of a given configuration.

A natural question of special relevance in view of numerical simulations, is whether the optima are stable with respect to variations of the initial and final distribution of mass. In order to introduce this question more precisely and to state our main result, let us give some informal definitions. More technical definitions will be introduced in Section \ref{s:notation} and used along the paper. Nevertheless, the simplified notation introduced here suffices to formulate the question and our main result.\\ 

Given two finite positive measures $\mu^-,\mu^+$ on the set $X:=\overline{B_R(0)}\subset\R^d$ with  $\mu^-(X)=\mu^+(X)$, a traffic path connecting $\mu^-$ to $\mu^+$ is a vector-valued measure $T=\vec T(\Haus^1\trace E)$, supported on a set $E\subset X$, which is contained in a countable union of curves of class $C^1$, having distributional divergence 
$${\mbox{div}}~T=\mu^+-\mu^-.$$
The $\alpha$-mass of $T$ is defined as the quantity
$$\Mass^\alpha(T):=\int_E |\vec T(x)|^{\alpha}d\Haus^1(x).$$
We say that $T$ is an optimal traffic path, and we write $T\in\OTP(\mu^-,\mu^+)$ if 
$$\Mass^{\alpha}(T)\leq\Mass^\alpha(S), \mbox{ for every traffic path } S \mbox { with } \mbox{div}~S=\mu^+-\mu^-.$$

We address the following question about the stability of optimal traffic paths, raised in \cite[Problem 15.1]{BCM}.

\begin{question}\label{question1} Let $\alpha\leq 1-\frac{1}{d}$.
Let $(\mu^-_n)_{n\in\N},(\mu^+_n)_{n\in\N}$ be finite measures on $X$ and for every $n$ let $T_n\in \OTP(\mu^-_n,\mu^+_n)$, with $\MM(T_n)$ uniformly bounded.
Assume that $T_n$ converges to a vector-valued measure $T$ where ${\rm{div}}~T= \mu^+-\mu^-$ and $\mu^\pm$ are finite measures.
Is it true that $T\in \OTP(\mu^-,\mu^+)$?
\end{question}

The threshold
\begin{equation}
\label{ass:threshold}
\alpha=1-\frac{1}{d}
\end{equation}
appears in several contexts in the literature. Firstly, when $\alpha$ is above this value any two probability measures with compact support in $\R^d$ can be connected with finite cost (see Proposition~\ref{irrigability}). Secondly, above this value the answer to the previous question is positive and the minimum cost between two given measures is continuous with respect to the weak$^*$ convergence of measures (see \cite[Lemma 6.11 and Proposition 6.12]{BCM}). Finally, above the threshold interior regularity holds (see \cite[Theorem 8.14]{BCM}) and actually the stability property plays an important role in the proof of such result. The finiteness of the cost, as well as the continuity of the minimum cost, fails for values of $\alpha$ smaller or equal to the value \eqref{ass:threshold} (see \cite{CDRM2} for an example of failure of continuity). Surprisingly enough, the stability of optimal plans still holds, at least under mild additional assumptions. 
The main result of our paper provides a positive answer to the stability question for $\alpha$ below the critical threshold \eqref{ass:threshold}, when the supports of the limit measures $\mu^\pm$ are disjoint and ``not too big''; nothing is instead assumed on the approximating sequence $(\mu_n^\pm)_{n\in\N}$.

\begin{theorem}\label{thm:main}
		Let $\alpha>1-\frac{1}{d-1}$. Let $A^-,A^+\subset X$ be measurable sets and $\mu^-, \mu^+$ be finite measures on $X$ with $\mu^-(X)=\mu^+(X)$, $\supp(\mu^+)\cap \supp(\mu^-)=\emptyset$,
		\begin{equation}\label{ass:supports}
		\Haus^1(A^-\cup A^+)=0 \qquad \mbox{and} \qquad \mu^-(X \setminus A^-)=\mu^+(X \setminus A^+)=0.
		\end{equation}
		Let $  (\mu^-_n)_{n\in \N}, ( \mu^+_n)_{n\in \N}$ be finite measures on $X$ such that $\mu^-_n(X)=\mu^+_n(X)$ and 
		\begin{equation}\label{hp:supp-n-convergence}
		\mu^\pm _n \rightharpoonup \mu^\pm.
		\end{equation}
		For every $n\in \N$ let $T_n\in \OTP(\mu^-_n,\mu^+_n)$ be an optimal traffic path and assume that  there exists a traffic path $T$ and a constant $C>0$ such that 
		$$	T_n\rightharpoonup T \qquad \mbox{and} \qquad \MM(T_n)\leq C.$$
		Then $T$ is optimal, namely
		$$T\in \OTP(\mu^-,\mu^+).$$
	\end{theorem}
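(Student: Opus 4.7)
My plan is to argue by contradiction: assume $T\notin\OTP(\mu^-,\mu^+)$, so there is a competitor $S$ with $\mathrm{div}\,S=\mu^+-\mu^-$ and $\MM(S)<\MM(T)$. By the standard lower semicontinuity of $\MM$ under weak convergence, $\MM(T)\le\liminf_n\MM(T_n)$, so it suffices to construct, for $n$ large, a competitor $S_n$ with $\mathrm{div}\,S_n=\mu_n^+-\mu_n^-$ and $\limsup_n\MM(S_n)\le\MM(S)$: this would give $\MM(S_n)<\MM(T_n)$ eventually, contradicting the optimality of $T_n$. Because $\alpha<1$ yields the subadditivity $\MM(T_1+T_2)\le\MM(T_1)+\MM(T_2)$, it is enough to find ``correction'' traffic paths $C_n$ with $\mathrm{div}\,C_n=(\mu_n^+-\mu^+)-(\mu_n^--\mu^-)$ and $\MM(C_n)\to 0$, and then take $S_n:=S+C_n$.

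For the construction of $C_n$, I would work separately near $A^+$ and $A^-$, exploiting $\Haus^1(A^\pm)=0$. Fix $\epsilon>0$ and cover each $A^\pm$ by finitely many open balls $\{B_i^\pm\}$ with $\sum_i r_i^\pm<\epsilon$, $\mu^\pm$-negligible boundaries, and with $\bigcup_iB_i^+$ disjoint from $\bigcup_jB_j^-$ (possible because $\supp\mu^+\cap\supp\mu^-=\emptyset$). Weak convergence then gives $m_{n,i}^\pm:=\mu_n^\pm(B_i^\pm)\to m_i^\pm:=\mu^\pm(B_i^\pm)$ and $\mu_n^\pm(X\setminus\bigcup_iB_i^\pm)\to 0$. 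Inside each $B_i^\pm$, I look for a local traffic path sending $\mu_n^\pm|_{B_i^\pm}$ to $\mu^\pm|_{B_i^\pm}$ through a central atom $m_{n,i}^\pm\delta_{x_i^\pm}$, at cost $\lesssim(m_i^\pm)^\alpha r_i^\pm$. Summing over $i$ then yields a total local contribution $\lesssim M^\alpha\epsilon$, where $M$ is a uniform mass bound. The residual atomic discrepancy $\sum_i(m_{n,i}^\pm-m_i^\pm)\delta_{x_i^\pm}$, together with the mass $\mu_n^\pm(X\setminus\bigcup_iB_i^\pm)$ escaped from the balls, has vanishing total variation as $n\to\infty$ for fixed $\epsilon$, and can be absorbed by a naive direct atom-to-atom transport of cost $\lesssim(\mathrm{disc.})^\alpha\,\mathrm{diam}(X)\to 0$. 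A diagonal argument letting $\epsilon\to 0$ slowly then produces $\MM(C_n)\to 0$.

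The main obstacle is the local construction inside $B_i^\pm$: transporting the essentially arbitrary (possibly $d$-dimensional) measure $\mu_n^\pm|_{B_i^\pm}$ to a single Dirac at cost $\lesssim m^\alpha r$. The standard dyadic proof of this bound converges only under the classical irrigability threshold $\alpha>1-\tfrac{1}{d}$, which is \emph{not} assumed here; only the weaker $\alpha>1-\tfrac{1}{d-1}$ is available. The trick must be to replace the full $d$-dimensional dyadic aggregation with a hierarchical construction branching at rate $2^{d-1}$ at each scale---equivalently, routing through a $(d-1)$-dimensional scaffold inside $B_i^\pm$ (say, the $(d-1)$-faces of a dyadic cube decomposition, or a family of concentric spheres)---whose geometric sum converges precisely when $\alpha>1-\tfrac{1}{d-1}$. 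The matching outward step, from $m_i^\pm\delta_{x_i^\pm}$ to $\mu^\pm|_{B_i^\pm}$, is comparatively cheap because $\mu^\pm|_{B_i^\pm}$ is supported on the $\Haus^1$-null set $A^\pm$: a fine covering of its support by small intervals allows the usual $1$-dimensional irrigation bound to close at cost $o_\epsilon(1)$ uniformly in $n$. Making this inward hierarchical construction quantitative, with the correct $(m_i^\pm)^\alpha r_i^\pm$ bound uniformly in $n$, is the heart of the proof; everything else is assembly and a diagonal limit.
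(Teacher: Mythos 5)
Your high-level contradiction scheme is sound, but the local construction that drives it cannot exist in the regime $1-\tfrac1{d-1}<\alpha\le 1-\tfrac1d$, and no scaffold or branching trick can rescue it. Concretely, you propose to transport $\mu_n^\pm\trace B_i^\pm$, an essentially arbitrary (possibly full-dimensional) measure, to a Dirac at the center of $B_i^\pm$ at cost $\lesssim (m_i^\pm)^\alpha r_i^\pm$. But by the non-irrigability theorem of Devillanova and Solimini (\cite[Theorem 1.2]{DevSol}, quoted in the paper just before Theorem~\ref{irrigability}), for $\alpha\le 1-\tfrac1d$ there is \emph{no} traffic path of finite $\alpha$-mass connecting a Dirac delta to Lebesgue measure on a ball --- irrespective of the total mass --- so the bound you need is not merely hard to prove, it is false. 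Your suggested workaround, aggregating along a $(d-1)$-dimensional scaffold with $2^{d-1}$-ary branching, does not help: the mass sits in the $d$-dimensional interior and must first be drained onto the scaffold, and that step already forces the full $2^d$-ary dyadic rate. Indeed, if the correction paths $C_n$ with $\MM(C_n)\to 0$ existed, one would obtain continuity of the minimal transport energy under weak convergence, which the paper notes (citing~\cite{CDRM2}) fails below the critical threshold. Finally, the $\Haus^1$-nullity of $A^\pm$ is not what makes your ``outward step'' cheap: the paper uses that hypothesis only to make $\sum_i r_i^\pm$, and hence $\MM(T\trace\bigcup_i \overline{B_i^\pm})$ and $\MM(T_{opt}\trace\bigcup_i \overline{B_i^\pm})$, arbitrarily small --- not to bound a transport cost from a Dirac to $\mu^\pm$.

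The paper sidesteps your obstacle by never connecting $\mu_n^\pm$ and $\mu^\pm$ directly. Using Smirnov's decomposition (Theorem~\ref{s-decompcurr}), both $T_n$ and $T_{opt}$ are represented as superpositions of Lipschitz curves, which are then \emph{cut} at the spheres $\partial B_i^\pm$: the curves of $T_n$ starting in $B_i^-$ are kept only until they first leave $B_i^-$ (giving $T_n^{sel,-}$), the curves of $T_{opt}$ are kept from first exit of $\bigcup_i B_i^-$ to last entry into $\bigcup_j B_j^+$ (giving $T_{opt}^{restr}$), and symmetrically on the $+$ side. The only transport built from scratch connects the exit distribution $\partial_+T_n^{sel,-}$ to the entry distribution $\partial_- T_{opt}^{restr}$, and both live genuinely on the $(d-1)$-spheres $\partial B_i^-$, which is exactly where Lemma~\ref{irrigationspher} applies at cost $\lesssim\sum_i r_i^-$, requiring only $\alpha>1-\tfrac1{d-1}$. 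The cost of dragging the $d$-dimensional bulk of $\mu_n^\pm$ from the interior of $B_i^\pm$ out to the sphere is never constructed: it is already paid inside $\MM\big(T_n\trace\bigcup_i \overline{B_i^\pm}\big)$, which the paper compares against the competitor via the localized lower semi-continuity (Theorem~\ref{lsc}(1)) and the quasi-additivity estimate (Lemma~\ref{lemma:quasiadditive}), with a further correction (Proposition~\ref{l:small-transport}) for the slight mass mismatch between $\mu_n^\pm(B_i^\pm)$ and $\mu^\pm(B_i^\pm)$. This reuse of $T_n$'s own curves inside the balls --- rather than a fresh local irrigation, which is impossible in this $\alpha$-regime --- is the idea your proposal is missing.
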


\begin{remark}
\begin{enumerate}
\item Notice that in the plane (namely, for $d=2$) our result cover all possible exponents $\alpha\in(0,1)$.
\item The actual notion of traffic path as well as the notion of convergence mentioned in Question \ref{question1} and denoted in Theorem \ref{thm:main} by $T_n\rightharpoonup T$, are slightly different from those used in this introduction (see Subsection \ref{ss:currents}). For our purposes, it is important to observe that the convergence of traffic paths $T_n$ to $T$ implies the convergence of ${\mbox{div }}T_n$ to ${\mbox{div }}T$, weakly in the sense of measures.
\item The assumptions that the supports of $\mu^-$ and $\mu^+$ are disjoint is recurrent in the literature. For example it is assumed in the proof of interior regularity properties of optimal traffic plans (see \cite[Chapter 8]{BCM}). Moreover such hypothesis could be dropped if we assume that either $\mu^-$ or $\mu^+$ are finite atomic measures. However we will not pursue this in the present paper.
\item The restriction that $\mu^\pm$ are supported on $\Haus^1$-null sets is essential for our proof (even though we can relax such assumption in some special case, see \cite{CDRM2}). On the other hand, restrictions on the ``size'' of sets supporting the measures $\mu^\pm$ are recurrent assumptions in previous works (see \cite[Chapter 10]{BCM} and \cite{DevSol}). Requiring \eqref{ass:supports} for supporting Borel sets $A^+$ and $A^-$ rather than for the (closed) supports of $\mu^\pm$, allows one to apply the theorem to more cases; for instance, as soon as the limit measures are supported on any countable set (possibly dense in an open subset of $X$).
\item There is a subtle reason for our choice to use traffic paths, rather than traffic plans, which is related to a known issue about the definition of the cost for traffic plans (see the discussion at the beginning of \cite[Chapter 4]{BCM}). Nevertheless we are able to prove a weaker version of our main result also for traffic plans: roughly speaking one should assume additionally the Hausdorff convergence of the supports of $\mu^\pm_n$ to the supports of $\mu^\pm$. This problem and other versions of the stability results with weaker assumptions on $\mu^\pm$ in some special settings are addressed in \cite{CDRM2}.
\end{enumerate}
\end{remark}

\subsection*{On the structure of the paper}
A few words are worthwhile concerning the organization of the paper. In Section \ref{s:notation} we introduce the main notation and in Section \ref{s:known} we collect some properties of optimal traffic paths which we use extensively through the paper. In particular, in Proposition \ref{p:propr_good_dec} we prove a result about the representation of optimal traffic paths as weighted collections of curves, which paves the way for several new operations on traffic paths introduced in this paper. We conclude Section \ref{s:known} raising the main question on the stability of optimal traffic paths and recalling the results which are already available in the literature. Section \ref{s:sci} requires some explanation: there we prove a result on the lower semi-continuity of the transportation cost. Clearly such property is already used by many other authors. The reason for our attention on that issue is twofold: firstly we want to throw light on a point that is partially overlooked in some previous works (see Remark \ref{remequivalence}), secondly we need a stronger (localized) version of the usual semi-continuity. {Section~\ref{s:ideas} deserves particular attention at a first reading, since it gives a heuristic presentation of the  proof of Theorem \ref{thm:main} and sheds light on several lemmas used therein. We kept the presentation as informal as possible, so that the reader can follow the fundamental ideas of the paper even without being used to the notions and definitions of Section \ref{s:notation}.}
Section \ref{s:prelim} contains several preliminary lemmas, covering results and new techniques which are the ingredients of the proof of the main theorem. Eventually, in Section \ref{s:proof}, we prove Theorem \ref{thm:main}.

\section{Notation and preliminaries}\label{s:notation}
\subsection{Measures  and rectifiable sets}
Given a locally compact separable metric space $Y$, we denote by $\mathscr{M}(Y)$ the set of Radon measures in $Y$, namely the set of (possibly signed) measures on the $\sigma$-algebra of Borel sets of $Y$ that are locally finite and inner regular. We denote also by $\M(Y)$ the subset of positive measures and by $\mathscr P(Y)$ the subset of probability measures, i.e. those poitive measures $\mu$ such that $\mu(Y)=1$. 

We denote by $|\mu|$ the total variation measure associated to $\mu$. The negative and positive part of $\mu$ are the positive measures defined respectively by
$$\frac{|\mu|-\mu}{2} \quad\mbox{and}\quad \frac{|\mu|+\mu}{2}.$$
 For $\mu,\nu\in\M(Y)$, we write $\mu\leq \nu$ in case $\mu(A)\leq \nu(A)$ for every Borel set $A$.
Given a measure \(\mu\) we denote by 
$$\supp (\mu):= \bigcap\{C\subset Y:C \mbox{ is closed and } |\mu|(Y \setminus C)=0\}$$
its \emph{support}. We say that $\mu$ is \emph{supported} on a Borel set $E$ if $|\mu|(Y\setminus E)=0$.
For a  Borel set \(E\),  \(\mu\trace E\) is the  restriction of \(\mu\) to \(E\), i.e. the measure defined by 
$$[\mu\trace E](A)=\mu(E\cap A) \qquad \mbox{for every Borel set $A$.}$$ 
We say that two measures $\mu$ and $\nu$ are mutually singular if there exists a Borel set $E$ such that $\mu=\mu\trace E$ and $\nu=\nu\trace E^c$.

For a measure $\mu\in\mathscr{M}(Y)$ and a Borel map \(\eta :Y\to Z\) between two metric spaces we let \(\eta_\sharp \mu\in\mathscr{M}(Z)\) be the push-forward measure, namely 
$$\eta_\sharp \mu(A):=\mu(\eta^{-1}(A)), \qquad \mbox{for every Borel set }A\subset Z.$$

We use $\Leb^d$ and $\Haus^k$ to denote respectively the $d$-dimensional Lebesgue measure on $\R^d$ and the $k$-dimensional Hausdorff measure, see \cite{SimonLN}.


A  set \(K\subset \R^d\) is said to be \emph{countably \(k\)-rectifiable} (or simply \emph{\(k\)-rectifiable}) if it can be covered, up to an \(\Haus^k\)-negligible set, by countably many $k$-dimensional submanifolds of class \(C^1\). At $\Haus^k$-a.e. point $x$ of a $k$-rectifiable set $E$, a notion of (unoriented) tangent $k$-plane is well-defined: we denote it by ${\rm{Tan}}(E,x)$. 




\subsection{Rectifiable currents}\label{ss:currents} We recall here the basic terminology related to $k$-dimensional rectifiable currents. We refer the reader to the introductory presentation given in the standard textbooks \cite{SimonLN}, \cite{KrantzParks} for further details. The most complete reference remains the treatise \cite{FedererBOOK}. 

A \emph{$k$-dimensional current} $T$ in $\R^d$ is a continuous linear functional on the 
space $\D^k(\R^d)$ of smooth and 
compactly supported differential $k$-forms on $\R^d$. 
Hence the space $\D_k(\R^d)$ of  $k$-dimensional currents in $\R^d$ is endowed with the natural notion of weak$^*$ convergence. For a sequence $(T_n)_{n\in \N}$ of $k$-dimensional currents converging to a current $T$, we use the standard notation $T_n \rightharpoonup T$.
With $\partial T$ we denote the \emph{boundary} of $T$,
that is the $(k-1)$-dimensional current  defined  via 
$$\langle\partial T, \omega \rangle := \langle T, d\omega\rangle \quad\text{ for every } \omega\in \D^{k-1}(\R^d).$$
The \emph{mass} of $T$, denoted by 
$\Mass(T)$, is the supremum of $\langle T, \omega\rangle$ over
all $k$-forms $\omega$ such that $|\omega|\le 1$
everywhere (here with $|\omega|$ we denoted the comass norm of $\omega$). 

By the Radon--Nikod\'{y}m Theorem, a $k$-dimensional current $T$ with finite mass can be identified with the vector-valued measure
$T=\vec{T} \|T\|$ where $\|T\|$ is a finite positive measure
and $\vec{T}$ is a unit $k$-vector field.
Hence, the action of $T$ on a $k$-form
$\omega$ is given by
\[
\langle T, \omega\rangle 
= \int_{\R^d} \langle\omega(x), \vec{T}(x)\rangle d\|T\|(x)
\, .
\]

In particular a $0$-current with finite mass can be identified with a real-valued Radon measure and the mass of the current coincides with the total variation (or mass) of the corresponding measure. We will tacitly use such identification several times through the paper. 

For a current $T$ with finite mass, we will denote by 
$\supp(T)$ its \emph{support}, defined as the support of the associated measure $\|T\|$. A current $T$ is called \emph{normal} if both $T$ 
and $\partial T$ have finite mass; we denote the set of normal $k$-currents in $\R^d$ by $\mathbf{N}_k(\R^d)$.
Given a normal $1$-current $T$, we denote by $\partial_+T$ and $\partial_-T$ respectively the positive and the negative part of the (finite) measure $\partial T$.
It is well-known that, if $T$ is a normal current with compact support and $\partial T=\mu^+-\mu^-$, (where not necessarily $\mu^+$ and $\mu^-$ are mutually singular) it holds
\begin{equation}\label{e:massborduno}
\Mass(\mu^+) = \Mass(\mu^-).
\end{equation}
In particular:
\begin{equation}\label{e:massbord}
\Mass(\partial T)=2\Mass(\partial_- T) = 2\Mass(\partial_+ T).
\end{equation}
Given a Borel set $A \subseteq \R^d$, we define the restriction of a current $T$ with finite mass to $A$ as
$$
\langle T \trace A, \omega\rangle 
:= \int_{A} \langle\omega(x), \vec{T}(x)\rangle d\|T\|(x)
\, .
$$
Notice that the restriction of a normal current to a Borel set is a current with finite mass, but it might fail to be normal.

On the space of $k$-dimensional currents one can define the \emph{flat norm} as
\begin{equation}\label{e:flat}
\Flat(T):=\inf\{\Mass(R)+\Mass(S): T=R+\partial S,\, R \in \D_k(\R^d),\,  S \in \D_{k+1}(\R^d) \}.
\end{equation}
The main reason for our interest on this notion is the fact that the flat norm metrizes the weak$^*$ convergence of normal currents in a compact set with equi-bounded masses and masses of the boundaries. This fact can be easily deduced from \cite[Theorem 4.2.17(1)]{FedererBOOK}.

A $k$-dimensional \emph{rectifiable current} is a current 
$T=T[E,\tau,\theta] $, which can be represented as
\[
\langle T, \omega\rangle 
= \int_{E} \langle\omega(x), \tau(x)\rangle \, \theta(x) d \Haus^k(x)
\, ,
\]
where $E$ is a $k$-rectifiable set, $\tau(x)$ is a unit simple $k$-vector field defined on $E$ which at $\Haus^k$-a.e $x\in E$ spans the approximate tangent space ${\rm{Tan}}(E,x)$ and $\theta: E\to \R$ is a function such that $\int_E |\theta| d \Haus^k<\infty$. We denote by $\mathbf{R}_{k}(\R^d)$ the space of $k$-dimensional rectifiable currents in $\R^d$. Modulo changing sign to the orientation $\tau$, we can always assume that $\theta$ takes non-negative values. We will tacitly make such assumption through the paper, unless we specify elsewhere. It is easy to see that for $T \in \mathbf{R}_{k}(\R^d)$ it holds
\begin{equation}\label{e:mass}
\Mass(T)=\int_{E} \theta(x) d \Haus^k(x);
\end{equation}
in particular, any rectifiable current has finite mass.


\subsection{$\alpha$-mass}
For fixed $\alpha \in [0, 1)$, we define also the \emph{$\alpha$-mass} of a current $T \in \mathbf{R}_k(\R^d)\cup \mathbf{N}_k(\R^d)$ by
\begin{equation}\label{e:alphamass}
\Mass ^\alpha (T ) := \begin{cases} \int_E \theta^\alpha(x) d\Haus^k(x) & \quad \mbox{if $T \in \mathbf{R}_{k}(\R^d)$},\\
+\infty & \quad \mbox{otherwise}.
\end{cases}
\end{equation}
One elementary property of this functional is its \emph{sub-additivity}, namely
\begin{equation}
\label{eqn:mass-subadd}
\MM(T_1+T_2) \leq \MM(T_1)+ \MM(T_2) \qquad \mbox{for every } T_1, T_2 \in \mathbf{R}_k(\R^d)\cup \mathbf{N}_k(\R^d).
\end{equation}
Indeed, the inequality is trivial if $T_1$ or $T_2$ is not rectifiable. In turn, if $T_i= T[E_i, \tau_i, \theta_i]$, $i=1,2$, the multiplicity $\theta$ of $T_1+T_2$ is obtained as the sum of the multiplicities of $T_1$ and $T_2$ with possible signs, so that $\theta \leq \theta_1+\theta_2$. Since moreover the inequality $(\theta_1+\theta_2)^\alpha \leq \theta_1^\alpha+\theta_2^\alpha$ holds for every $\theta_1,\theta_2\in[0,\infty)$, we deduce that
$$
\MM(T_1+T_2) \leq \int_{E_1 \cup E_2} (\theta_1+\theta_2)^\alpha \, d\Haus^k \leq  \int_{E_1 \cup E_2} \theta_1^\alpha+\theta_2^\alpha  \, d\Haus^k = \MM(T_1)+ \MM(T_2).
$$

\subsection{Traffic paths}
Fix $R>0$. From now on, by $X$ we denote the closed ball of radius $R$ in $\R^d$ centered at the origin. Following \cite{Xia} and 
\cite{BCM}, given two positive measures $\mu^-,\mu^+ \in \M(X)$ with the same total variation, we define the set $\TP(\mu^-,\mu^+)$ of the \emph{traffic paths} connecting $\mu^-$ to $\mu^+$ as $$\TP(\mu^-,\mu^+):=\{T\in\mathbf{N}_1(\R^d): \supp(T)\subset X, \partial T=\mu^+-\mu^-\},$$
and the \emph{minimal transport energy} associated to $\mu^-,\mu^+$ as
$$\MM(\mu^-,\mu^+):= \inf \{\MM(T): T \in \TP (\mu^- ,\mu^+)\}.$$

Moreover we define the set of \emph{optimal traffic paths} connecting $\mu^-$ to $\mu^+$ by 
\begin{equation}
	\label{eqn:otp}
	\OTP (\mu^- ,\mu^+):=\{T \in \TP (\mu^- ,\mu^+) : \MM(T)=\MM(\mu^-,\mu^+) \}.
\end{equation}
Given a rectifiable current $T$ with compact support in $\R^d$ and a Lipschitz map $f:\R^d\to\R^m$, we denote by $f_\sharp T$ the push-forward of $T$ according to $f$, i.e the rectifiable current in $\R^m$ defined by
$$\langle f_\sharp T, \omega\rangle:= \langle T, f^\sharp\omega\rangle ,\quad\text{ for every } \omega\in \D^k(\R^m)$$
where $f^\sharp\omega$ is the pull-back of the form $\omega$.

A consequence of the following proposition is that, in order to minimize the $\alpha$-mass among currents with boundary in $X$, it is not restrictive to consider only currents supported in $X$. Indeed the projection onto $X$ reduces the $\alpha$-mass. See also \cite[Lemma~3.2.4 (2)]{depauwhardt}.
\begin{proposition}\label{p:push_forw} 
Let $T\in \mathbf{R}_1(\R^d)$ and let $f:\R^d\to\R^m$ be an $L$-Lipschitz map. Then $\Mass^\alpha(f_\sharp T)\leq L\Mass^\alpha(T)$.
\end{proposition}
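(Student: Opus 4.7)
The plan is to use the area formula to express the $\alpha$-mass of $f_\sharp T$ as an integral over the source space, then exploit the subadditivity of $t\mapsto t^\alpha$ together with the pointwise Lipschitz bound on the tangential Jacobian of $f$.

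First I would write $T=T[E,\tau,\theta]$ and recall that by the standard push-forward formula, for every smooth compactly supported $1$-form $\omega$ on $\R^m$,
$$\langle f_\sharp T,\omega\rangle = \int_E \theta(x)\,\langle\omega(f(x)),Df(x)\tau(x)\rangle\,d\Haus^1(x),$$
where $Df(x)\tau(x)$ is defined $\Haus^1$-a.e.\ on $E$ via tangential differentiability of the Lipschitz map $f$. On the set $E^+:=\{x\in E:Df(x)\tau(x)\neq 0\}$ the scalar $J_1 f(x):=|Df(x)\tau(x)|$ is positive and bounded above by $L$, while the integrand vanishes on $E\setminus E^+$. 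Applying the area formula on $E^+$, I would identify $f_\sharp T$ with the rectifiable $1$-current supported on $f(E^+)$ whose multiplicity-times-orientation is
$$\tilde\theta(y)\,\tilde\tau(y) \;=\; \sum_{x\in f^{-1}(y)\cap E^+}\theta(x)\,\frac{Df(x)\tau(x)}{J_1 f(x)} \qquad \text{for $\Haus^1$-a.e.\ $y\in f(E^+)$}.$$
Since each of the vectors on the right-hand side has unit norm, the triangle inequality gives $\tilde\theta(y)\leq \sum_{x\in f^{-1}(y)\cap E^+}\theta(x)$.

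Next I would use the elementary subadditivity $\bigl(\sum_i a_i\bigr)^\alpha\leq \sum_i a_i^\alpha$ (valid for $a_i\geq 0$ and $\alpha\in[0,1]$, and already invoked for \eqref{eqn:mass-subadd}) to obtain
$$\tilde\theta(y)^\alpha \leq \sum_{x\in f^{-1}(y)\cap E^+}\theta(x)^\alpha.$$
Integrating against $\Haus^1$ on $f(E^+)$ and applying the area formula once more in the reverse direction yields
$$\Mass^\alpha(f_\sharp T) = \int_{f(E^+)}\tilde\theta(y)^\alpha\,d\Haus^1(y) \leq \int_{E^+}\theta(x)^\alpha\,J_1 f(x)\,d\Haus^1(x) \leq L\int_E \theta(x)^\alpha\,d\Haus^1(x) = L\,\Mass^\alpha(T),$$
where in the last step I have used $J_1 f\leq L$, which is an immediate consequence of the Lipschitz hypothesis.

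The argument is essentially a bookkeeping exercise, and the only genuinely delicate point is justifying the representation of $f_\sharp T$ as a rectifiable current when $f$ is merely Lipschitz, which requires invoking tangential differentiability along a rectifiable set and the Lipschitz-case area formula; once these classical tools are in place, the inequality follows directly from the two observations that the unit vectors pointing along the image fibers can only cancel (never reinforce) in the sum defining $\tilde\theta$, and that $J_1 f\leq L$.
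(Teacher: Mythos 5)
Your proof is correct and follows essentially the same route as the paper: bound the multiplicity of $f_\sharp T$ at a point $y$ by the sum (or $\Haus^0$-integral) of $\theta$ over the fiber $f^{-1}(y)$, apply the elementary inequality $\bigl(\sum a_i\bigr)^\alpha\leq\sum a_i^\alpha$, then convert back to an integral over $E$ via the area formula and use the pointwise bound $J_1 f\leq L$. Your version spells out the cancellation step (that unit vectors along the fibers can only reduce the multiplicity) and the restriction to the set where the tangential Jacobian is positive, both of which the paper leaves implicit, but the argument is the same.
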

\begin{proof}
If $T=T[E,\tau,\theta]$, combining the Area Formula (see \cite[(8.5)]{SimonLN}) and the fact that $(a+b)^{\alpha}\leq a^{\alpha}+b^{\alpha}$ for every $a,b>0$, we get
\begin{equation}
\begin{split}
\Mass^{\alpha}(f_\sharp T)&\leq\int_{f(E)}\left(\int_{f^{-1}(y)}\theta(x)d\Haus^0(x)\right)^\alpha d\Haus^1(y)\\
&\leq\int_{f(E)}\int_{f^{-1}(y)}\theta^\alpha(x) d\Haus^0(x)d\Haus^1(y)\\
&=\int_E J_f(x)\theta^\alpha(x)d\Haus^1(x)\leq L\int_E\theta^\alpha(x) d\Haus^1(x)= L\Mass^{\alpha}(T).
\end{split}
\end{equation}
\end{proof}
\begin{remark}
We notice that, given two measures $\mu^-, \mu^+ \in \M(X)$ with the same total variation and a rectifiable current $R\in \mathbf{R}_{1}(\R^d)$ with $\Mass^\alpha(R)<\infty$ and $\partial R=\mu^+-\mu^-$, there exists $R'\in \mathbf{R}_{1}(X)$ with $\partial R'=\mu^+-\mu^-$ and 
$$\Mass^\alpha(R') \leq \Mass^\alpha(R).$$
More precisely, if $R$ is not supported on $X$, then one can find $R'$ such that $$\Mass^\alpha(R') < \Mass^\alpha(R).$$ The proof of this fact is easily obtained by choosing $R'$ as the push-forward of the current $R$ according to the closest-point projection $\pi$ onto $X$ and applying Proposition \ref{p:push_forw}, observing that $\pi$ has local Lipschitz constant strictly smaller than 1 at all points of $\R^d\setminus X$.
\end{remark}

\begin{remark}[{\bf(Comparison with costs studied in the literature)}]\label{remequivalence} 
The original definition of ``cost''  of a traffic path slightly differs from the $\alpha$-mass defined above. Indeed in \cite[Definition 3.1]{Xia} the author defines the cost of a traffic path as the lower semi-continuous relaxation on the space of normal currents of the functional \eqref{e:gilbertenergy} defined on a class of objects called \emph{polyhedral currents}. In \cite[Section 3]{xia2}, the author notices that, in the class of rectifiable currents, his definition of cost coincides with the $\alpha$-mass defined in \eqref{e:alphamass}. The proof of this fact is only sketched in \cite[Section 6]{White1999} and will be discussed in more detail in \cite{flat-relax}.
To keep the present paper self-contained, in our exposition we prefer not to rely on this fact, but we stick to the notion of cost given by our definition of $\alpha$-mass. We will prove independently in Section \ref{s:sci} that the $\alpha$-mass is lower semi-continuous, together with a localized version of this result that does not appear in the literature.
Since several results in previous works (see for instance Theorem \ref{irrigability}) are first proven for polyhedral chains and then extended by lower semi-continuity, their validity in our setting does not rely on the equivalence between the two costs.


\end{remark}


\section{Known results on optimal traffic paths}\label{s:known}
In this section we collect some of the known properties of optimal traffic paths. The presentation does not aim to be exhaustive, but we only recall the facts used in the proof of our main result.

\subsection{Existence of traffic paths with finite cost}
We begin with the observation that the existence of elements with finite $\alpha$-mass in $\TP(\mu^-,\mu^+)$ is not guaranteed in general. For example in \cite[Theorem 1.2]{DevSol} it is proved that there exists no traffic path with finite $\alpha$-mass connecting a Dirac delta to the Lebesgue measure on a ball if $\alpha\leq 1-\frac 1d$. On the other hand, if the exponent $\alpha$ is larger than such critical threshold, then not only the existence of traffic paths with finite $\alpha$-mass is guaranteed, but one also has a quantitative upper bound on the minimal transport energy.
\begin{theorem}[{\cite[Proposition 3.1]{Xia}}]\label{irrigability}
Let $\alpha > 1-\frac 1d$ and $\mu^-,\mu^+ \in \M(\R^d)$ be two measures with equal mass $M$ supported on a set of diameter $L$. Then
$$\MM(\mu^- ,\mu^+) \leq C_{\alpha,d}M^\alpha L,$$
where $C_{\alpha,d}$ is a constant depending only on $\alpha$ and $d$. 
\end{theorem}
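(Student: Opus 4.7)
The plan is to construct explicitly a traffic path between $\mu^-$ and $\mu^+$ via a dyadic hierarchical scheme, in the spirit of Gilbert--Steiner type constructions. The threshold $\alpha>1-\tfrac1d$ will enter only through the convergence of a single geometric series.

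After a translation, embed $\supp(\mu^-)\cup\supp(\mu^+)$ into a closed cube $Q_0\subset\R^d$ of side $L$ (absorbing the $\sqrt d$ into the final constant). For every $k\geq 0$, partition $Q_0$ into the $2^{kd}$ dyadic sub-cubes $\{Q^k_j\}_j$ of side $L\cdot 2^{-k}$, with centers $x^k_j$, and define the atomic approximations
$$\mu^{\pm,k}:=\sum_j \mu^\pm(Q^k_j)\,\delta_{x^k_j},$$
so that $\mu^{\pm,k}\rightharpoonup\mu^\pm$ and $\mu^{-,0}=\mu^{+,0}=M\delta_{x^0_0}$. For each $k$, I build a ``stage'' rectifiable current $S^\pm_k\in\TP(\mu^{\pm,k+1},\mu^{\pm,k})$ as the union, over every cube $Q^k_j$ and its $2^d$ children $Q^{k+1}_{j_i}$, of oriented segments from $x^{k+1}_{j_i}$ to $x^k_j$, each carrying multiplicity $\mu^\pm(Q^{k+1}_{j_i})$. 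Each such segment has length at most $C_d\, L\cdot 2^{-k}$, and since the number of non-empty cubes at level $k+1$ is $N\leq 2^{(k+1)d}$, H\"older's inequality applied with exponents $1/\alpha$ and $1/(1-\alpha)$ yields
$$\sum_i \mu^\pm(Q^{k+1}_i)^\alpha\leq N^{1-\alpha}\Bigl(\sum_i \mu^\pm(Q^{k+1}_i)\Bigr)^\alpha\leq 2^{(k+1)d(1-\alpha)}M^\alpha,$$
whence $\MM(S^\pm_k)\leq C_d\, L \cdot 2^{-k}\cdot 2^{(k+1)d(1-\alpha)}M^\alpha$. Summing in $k$ produces a geometric series of ratio $2^{d(1-\alpha)-1}$, convergent \emph{precisely} because $\alpha>1-\tfrac1d$, so that $\sum_{k}\MM(S^\pm_k)\leq C_{\alpha,d}M^\alpha L$.

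For the passage to the limit, set $T^\pm_k:=\sum_{\ell=0}^{k-1} S^\pm_\ell\in\TP(\mu^{\pm,k},\mu^{\pm,0})$. By sub-additivity \eqref{eqn:mass-subadd} the masses $\MM(T^\pm_k)$ stay uniformly bounded, and the telescoping structure combined with the summability of $\Mass(S^\pm_k)$ (which is of course no smaller than $\MM(S^\pm_k)$, but the same dyadic estimate gives a finite bound also for the plain $\Mass$ when $\alpha=1$-computations are redone) shows that $(T^-_k-T^+_k)$ is Cauchy in flat norm. Since $\partial(T^-_k-T^+_k)=\mu^{+,k}-\mu^{-,k}\rightharpoonup\mu^+-\mu^-$, the limit current $T$ belongs to $\TP(\mu^-,\mu^+)$. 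Finally, by the lower semi-continuity of $\MM$ (proved, independently of this proposition, in Section~\ref{s:sci}) and sub-additivity,
$$\MM(T)\leq \liminf_{k\to\infty}\MM\bigl(T^-_k-T^+_k\bigr)\leq \sum_{\ell=0}^\infty \bigl(\MM(S^-_\ell)+\MM(S^+_\ell)\bigr)\leq C_{\alpha,d}\,M^\alpha L.$$

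The combinatorial heart of the proof is trivial: a scale-by-scale balancing of volumetric growth $2^{kd(1-\alpha)}$ against linear decay $2^{-k}$. The only genuine technical point is the limiting step: one must verify that the partial sums $T^\pm_k$ give rise to a flat-Cauchy sequence whose weak$^*$ limit is a bona fide normal current with the prescribed boundary, which is where one uses both the summability of masses at each scale and the weak$^*$ convergence $\mu^{\pm,k}\rightharpoonup \mu^\pm$.
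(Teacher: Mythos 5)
Your proof is correct and reproduces the dyadic Gilbert--Steiner construction underlying \cite[Proposition 3.1]{Xia}, which is all the paper offers (it cites the result without reproducing a proof). The two genuine points --- the H\"older balance of $2^{kd(1-\alpha)}$ against $2^{-k}$, and the flat-Cauchy plus lower-semi-continuity passage to the limit --- are handled correctly, and the latter is exactly what Remark~\ref{remequivalence} alludes to when it says such estimates are ``first proven for polyhedral chains and then extended by lower semi-continuity.''
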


\subsection{Structure of optimal traffic paths}\
An important information about the structure of optimal traffic paths (more in general, about traffic paths of finite $\alpha$-mass) is their rectifiability, which follows immediately from the definition of $\alpha$-mass. Some further piece of information comes from the fact that optimal traffic paths do not ``contain cycles''. A current $T$ with finite mass is called \emph{acyclic} if there exists no non-trivial current $S$ such that
$$\partial S=0 \qquad \mbox{and} \qquad \Mass(T)=\Mass(T-S)+\Mass(S).$$

The following theorem states that optimal traffic paths with finite cost are acyclic. Even though in \cite{PaoliniStepanov} several definitions of cost are considered, the proof of such theorem is given exactly for our cost \eqref{e:alphamass}.

\begin{theorem}[{\cite[Theorem 10.1]{PaoliniStepanov}}] \label{ottimo_acicl}
Let $\mu^-, \mu^+ \in \M(\R^d)$ and $T \in \OTP(\mu^-, \mu^+)$ with finite $\alpha$-mass. Then $T$ is acyclic.
\end{theorem}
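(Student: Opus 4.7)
The plan is to argue by contradiction. Suppose $T\in\OTP(\mu^-,\mu^+)$ has $\MM(T)<\infty$ and that there exists a nontrivial current $S$ with $\partial S=0$ and $\Mass(T)=\Mass(T-S)+\Mass(S)$. Since $\MM(T)<\infty$, the definition \eqref{e:alphamass} gives $T=T[E,\tau,\theta]$ rectifiable. The goal is to show that $T-S\in\TP(\mu^-,\mu^+)$ is a strictly better competitor, producing the contradiction $\MM(T-S)<\MM(T)$.

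The first and most delicate step is to extract a rigid structural description of $S$ from mass additivity. Viewing all three currents as vector-valued measures, the pointwise triangle inequality $\|T\|(A)\le\|T-S\|(A)+\|S\|(A)$ for every Borel $A$, combined with the assumed equality of total masses, forces equality for every $A$. This gives (i) $\supp(\|S\|)\subseteq E$ and (ii) $\|S\|\ll\|T\|$, so by Radon--Nikodym there exists $\rho\in[0,1]$ with $\|S\|=\rho\|T\|$. Writing $S=\vec{S}\|S\|$ with $|\vec{S}|=1$ and using both $\|T-S\|=\|T\|-\|S\|$ and $T-S=\tau\|T\|-\vec{S}\|S\|=(\tau-\rho\vec S)\|T\|$, the densities with respect to $\|T\|$ must satisfy $|\tau-\rho\vec{S}|=1-\rho$. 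Squaring and using $|\tau|=|\vec{S}|=1$ yields $\tau\cdot\vec{S}=1$ wherever $\rho>0$, so $\vec{S}=\tau$ there. Setting $\theta_S:=\rho\theta$, we conclude $S=T[E,\tau,\theta_S]$ with $0\le\theta_S\le\theta$; in particular $S$ is rectifiable.

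With this structural description the conclusion is immediate. Since $\partial S=0$, the current $T-S=T[E,\tau,\theta-\theta_S]$ belongs to $\TP(\mu^-,\mu^+)$. Nontriviality of $S$ yields a Borel set $F\subset E$ with $\Haus^1(F)>0$ and $\theta_S>0$ on $F$. On $F$ we have $\theta>\theta-\theta_S\ge 0$ with $\theta>0$, so strict monotonicity of $t\mapsto t^\alpha$ gives $(\theta-\theta_S)^\alpha<\theta^\alpha$, while the two expressions agree on $E\setminus F$. Integrating against $\Haus^1$ produces $\MM(T-S)<\MM(T)$, contradicting optimality.

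The main obstacle is the structural step: promoting the scalar identity $\Mass(T)=\Mass(T-S)+\Mass(S)$ to the pointwise alignment $\vec{S}=\tau$ and the bound $\theta_S\le\theta$, despite $S$ being a priori only a normal current. This is essentially a characterization of the equality case in the triangle inequality for vector-valued measures, and must be handled carefully to control the polar decomposition of all three currents simultaneously. An alternative but heavier route would be to invoke Smirnov's decomposition to reduce $S$ to an integral superposition of elementary simple-loop cycles and run the concavity comparison on each; the direct Radon--Nikodym computation sketched above is more transparent.
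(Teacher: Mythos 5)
Your argument is correct. Note first that the paper does not supply a proof of this statement: it is quoted from Paolini--Stepanov \cite[Theorem~10.1]{PaoliniStepanov} and used as a black box, so there is no in-paper proof to compare against. What you give is a clean self-contained derivation, and the key structural step---promoting the scalar equality $\Mass(T)=\Mass(T-S)+\Mass(S)$ to the identity of positive measures $\|T\|=\|T-S\|+\|S\|$ (since the finite measure $\|T-S\|+\|S\|-\|T\|$ is nonnegative and has zero total mass), then reading off $\|S\|\le\|T\|$, $\|S\|=\rho\|T\|$, and $\vec S=\tau$ on $\{\rho>0\}$ via the polar decompositions---is exactly the right way to see that a ``cycle'' inside a finite-mass $1$-current must be rectifiable, carried by $E$, aligned with $\tau$, and of smaller multiplicity. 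From there the strict monotonicity of $t\mapsto t^\alpha$ together with optimality of $T$ gives the contradiction. This is the natural route and, as far as one can tell, the same one underlying the cited reference; your alternative remark about running the concavity comparison cycle-by-cycle through Smirnov's decomposition is indeed heavier and not needed here.

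Two small points of precision you should tighten. First, when invoking the triangle inequality ``for every Borel $A$,'' the justification is that the inequality of total variation measures $\|T\|\le\|T-S\|+\|S\|$ already holds setwise, and the assumed equality of total masses (all finite, since $\Mass(S),\Mass(T-S)\le\Mass(T)<\infty$) then forces equality of the measures, not merely of their values at $\R^d$; say this explicitly. Second, in the last paragraph take $F:=\{x\in E:\theta_S(x)>0\}$, which has positive $\Haus^1$ measure by nontriviality of $S$; with an arbitrary $F$ on which $\theta_S>0$, the claim that ``the two expressions agree on $E\setminus F$'' need not hold, though of course $(\theta-\theta_S)^\alpha\le\theta^\alpha$ there anyway, so the conclusion survives. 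With these adjustments the proof is complete.
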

The power of this result relies in the possibility to represent acyclic normal $1$-currents as weighted collections of Lipschitz paths. Before stating this result, we introduce some notation.

 We denote by $\Lip$ the space of $1$-Lipschitz curves $\gamma: [0,\infty) \to \R^d$. For $\gamma\in\Lip$ we denote by $T_0(\gamma)$, the value
 $$T_0(\gamma):=\sup\{t:\gamma \mbox{ is constant on }[0,t]\}$$
 and by $T_\infty(\gamma)$ the (possibly infinite) value
 $$T_\infty(\gamma):=\inf\{t:\gamma \mbox{ is constant on }[t,\infty)\}.$$ 
 Given a Lipschitz curve with finite length $\gamma:[0,\infty)\to\R^d$,
 we call $\gamma(\infty):=\lim_{t\to\infty}\gamma(t)$.
 We say that a curve $\gamma\in\Lip$ of finite length is \emph{simple} if $\gamma(s)\neq\gamma(t)$ for every $T_0(\gamma)\leq s<t\leq T_\infty(\gamma)$ such that $\gamma$ is non-constant in the interval $[s,t]$.
 
 To a Lipschitz simple curve with finite length $\gamma:[0,\infty)\to\R^d$,
 we associate canonically the rectifiable $1$-dimensional current 
 $$R_\gamma:=[{\rm{Im}}(\gamma),\frac{\gamma'}{|\gamma'|},1].$$
 It follows immediately from \eqref{e:mass} that
 \begin{equation}
 \Mass(R_\gamma)=\Haus^1(\rm{Im}(\gamma))
 \end{equation}
 and it is easy to verify that
\begin{equation}
\label{eqn:r-gamma-boundary}
\partial R_\gamma=\delta_{\gamma(\infty)}-\delta_{\gamma(0)}.
\end{equation}
Since $\gamma$ is simple, if it is also non-constant, then $\gamma(\infty) \neq \gamma(0)$ and $\Mass(\partial R_\gamma)=2$.

In the following definition, we consider a class of normal currents that can be written as a weighted superposition of Lipschitz simple curves with finite length.
\begin{definition}[\bf{(Good decomposition)}]
Let $T\in \mathbf{N}_1(\mathbb{R}^d)$ and let $\pi \in \M(\Lip)$ be a finite nonnegative measure, supported on the set of curves with finite length, such that 
\begin{equation}
\label{eqn:buona-dec}
T=\int_{\Lip} R_\gamma d \pi (\gamma),
\end{equation}
in the sense of \cite[Section 2.3]{AlbMar}.

We say that $\pi$ is a good decomposition of $T$ if $\pi$ is supported on non-constant, simple curves and satisfies the equalities
\begin{equation}
\label{eqn:buona-dec-mass-T}
\Mass(T) 
= \int_{\Lip} \Mass(R_\gamma) d \pi(\gamma)
= \int_{\Lip} \Haus^1({\rm{Im}}(\gamma)) d \pi(\gamma)
  \, ; 
\end{equation}
\begin{equation}
\label{eqn:buona-dec-mass-boundaryT}
\Mass(\partial T) 
= \int_{\Lip} \Mass(\partial R_\gamma) d \pi(\gamma)
= 2 \pi({\Lip})
  \, .
\end{equation}
\end{definition}

Concretely, \eqref{eqn:buona-dec} means that, representing $T$ as a vector-valued measure $\vec T \| T\|$,
 for every smooth compactly supported vector field $\varphi: \R^d \to \R^d$ it holds
\begin{equation}
\label{eqn:good-dec-operativa}
\int_{\R^d} \varphi \cdot\vec T \,d\| T\|=
\int_{\Lip} \int_{0}^\infty \varphi(\gamma(t)) \cdot \gamma'(t)\, dt \, d \pi(\gamma)
\end{equation}
The following theorem, due to Smirnov (\cite{Smirnov93}), shows that any acyclic, normal,  1-dimensional current has a good decomposition.
\begin{theorem}[{\cite[Theorem 5.1]{PaoliniStepanov1}}]
\label{s-decompcurr}
Let $T=\vec{T} \|T\| \in \mathbf{N}_1(\R^d)$ be an acyclic normal $1$-current.
Then there is a Borel finite measure $\pi$ on $\Lip$ such that $T$ can be decomposed as
$$T=\int_{\Lip} R_\gamma d \pi (\gamma)$$
and $\pi$ is a good decomposition of $T$.

%
\end{theorem}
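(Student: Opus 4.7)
The plan is to first produce a decomposition of $T$ as an integral of Lipschitz curves (not yet guaranteed to be simple), and then exploit the hypothesis of acyclicity to refine this decomposition so that the representing measure is supported on simple curves and the mass equalities are realized.

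For the first (and hardest) step, I would follow Smirnov's strategy of exhibiting a finite positive measure $\pi$ on $\Lip$ satisfying \eqref{eqn:good-dec-operativa}. A convenient route is via approximation: for a standard mollifier $\rho_\e$, one writes $\rho_\e * T = V_\e \Leb^d$ with $V_\e$ smooth and $\Mass(\partial(V_\e\Leb^d)) \leq \Mass(\partial T)$. The classical flow of $V_\e/|V_\e|$, started at the negative part of $\mathrm{div}(V_\e\Leb^d)$ and weighted by $|V_\e|$, produces measures $\pi_\e$ on $\Lip$ for which $\rho_\e * T = \int R_\gamma \, d\pi_\e(\gamma)$. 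The supports of $\pi_\e$ consist of curves whose images lie in a fixed compact neighbourhood of $\supp(T)$ and have uniformly bounded length; hence by Prokhorov a subsequence converges weakly to some $\pi \in \M(\Lip)$, and the limit satisfies \eqref{eqn:buona-dec}.

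Next, I would refine $\pi$ so as to be supported on simple, non-constant curves. For each $\gamma \in \Lip$ of finite length, select (in a Borel-measurable way) a simple sub-curve $\sigma(\gamma)$ of $\gamma$ sharing the same endpoints, obtained by excising closed loops. Setting $\tilde\pi := \sigma_\sharp \pi$, the identity $\partial R_{\sigma(\gamma)} = \partial R_\gamma$ yields $\partial \tilde T = \partial T$ for $\tilde T := \int R_{\sigma(\gamma)} \, d\pi(\gamma)$, while $\Mass(R_{\sigma(\gamma)}) \le \Mass(R_\gamma)$. The difference $S := T - \tilde T$ satisfies $\partial S = 0$, and an elementary additivity argument (based on the fact that $\gamma$ differs from $\sigma(\gamma)$ by a union of loops oriented consistently with the flow) yields $\Mass(T) = \Mass(\tilde T) + \Mass(S)$. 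Acyclicity of $T$ then forces $S = 0$, so $T = \tilde T$ and
\[
\Mass(T) = \int_\Lip \Mass(R_\eta) \, d\tilde\pi(\eta),
\]
which is \eqref{eqn:buona-dec-mass-T}. Since $\tilde \pi$-a.e.\ $\eta$ is simple and non-constant, $\Mass(\partial R_\eta) = 2$; integrating and combining with the trivial opposite inequality stemming from \eqref{eqn:buona-dec} produces \eqref{eqn:buona-dec-mass-boundaryT}.

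The main obstacle is the first step: constructing the initial representing measure $\pi$ from the bare assumption that $T$ is a normal current. The flow approach is delicate because $\vec T$ is only defined $\|T\|$-almost everywhere, and the singular part of $\|T\|$ must be recovered through the mollification while keeping boundary masses under control. An alternative, more abstract route (closer to Smirnov's original argument) uses a Hahn--Banach separation in $\D^1(\R^d)$ against the convex cone generated by $\{R_\gamma : \gamma \in \Lip\}$; this avoids ODE issues but requires a nontrivial dual characterization of that cone. Either way, this is the portion of the proof where the bulk of the technical work concentrates.
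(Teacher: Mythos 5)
This theorem is not proved in the paper; it is cited as \cite[Theorem~5.1]{PaoliniStepanov1} (the metric-space rendering of Smirnov's decomposition \cite{Smirnov93}), so there is no internal argument to compare against. Your outline has the right broad shape, and you rightly flag that the difficult part is producing an initial representing measure $\pi$ with $T=\int R_\gamma\,d\pi$. However, the second half, which you present as routine, contains a genuine logical gap. After loop excision and the acyclicity argument you conclude $T=\tilde T$, and you then \emph{infer} $\Mass(T)=\int_{\Lip}\Mass(R_\eta)\,d\tilde\pi(\eta)$. That implication does not hold: even when every curve in $\supp\tilde\pi$ is simple and $T=\int R_\eta\,d\tilde\pi$, one only has $\Mass(T)\le\int\Mass(R_\eta)\,d\tilde\pi$, with strict inequality whenever \emph{distinct} simple curves overlap with opposite orientations. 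Loop excision removes self-intersections within a single curve but does nothing to rule out cancellation across different members of $\supp\tilde\pi$. For instance, $\tilde\pi=\tfrac12\delta_{\gamma}+\tfrac12\delta_{\bar\gamma}$, with $\bar\gamma$ the reversal of a simple arc $\gamma$, produces $T=0$ (acyclic, with all curves simple and non-constant) while $\int\Mass(R_\eta)\,d\tilde\pi>0$; so ``$T=\tilde T$ plus simplicity'' is genuinely not enough, and something has to prevent such decompositions from coming out of Step~1.

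The same issue undermines the intermediate claim $\Mass(T)=\Mass(\tilde T)+\Mass(S)$, which you justify via ``loops oriented consistently with the flow'': after the weak$^*$ limit $\pi_\e\rightharpoonup\pi$, the curves in $\supp\pi$ are no longer trajectories of any single smooth vector field, and moreover $\tilde T$ and $S$ can share support with opposing orientations, so this additivity is not elementary and may fail. The correct order of events is to establish the no-cancellation identity $\Mass(T)=\int_{\Lip}\Mass(R_\gamma)\,d\pi(\gamma)$ already for the initial measure $\pi$, \emph{before} excising loops --- e.g.\ each $\pi_\e$ satisfies it exactly (flow lines of a smooth field cannot cancel) and the identity passes to the limit by combining $\Mass(\rho_\e*T)\to\Mass(T)$ with lower semicontinuity of $\Mass$ and of $\gamma\mapsto\Mass(R_\gamma)$. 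Once that identity is in hand, the additivity $\Mass(T)=\Mass(\tilde T)+\Mass(S)$ and the remaining mass equalities do follow by the inequalities you sketch; without it, acyclicity alone does not close the argument.
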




In the following proposition we collect some useful properties of good decompositions. Further properties will be given in Proposition~\ref{p:propr_good_dec_2}.

\begin{proposition}[\bf{(Properties of good decompositions)}]\label{p:propr_good_dec}
If $T \in \mathbf{N}_1(\mathbb{R}^d)$ has a good decomposition $\pi$ as in \eqref{eqn:buona-dec}, the following statements hold:

\begin{enumerate}
\item The positive and the negative parts of the signed measure $\partial T$ are
\begin{equation}
\label{buona-dec-boundary}\partial_- T = \int_{\Lip}\delta_{\gamma(0)} d \pi (\gamma)
\qquad \mbox{and} \qquad
\partial_+ T = \int_{\Lip}\delta_{\gamma(\infty)} d \pi (\gamma).
\end{equation}
\item If $T= T[E, \tau, \theta]$ is rectifiable, then
\begin{equation}
\label{eqn:dens-acycl}
\theta(x) = \pi(\{\gamma: x \in {\rm{Im}}(\gamma) \}) \qquad \mbox{for $\Haus^1$-a.e. $x\in E$.}
\end{equation}
	\item For every $\pi' \leq \pi$ the representation
	 \begin{equation}
	\label{eqn:Tprimo}
	T' := \int_{\Lip} R_\gamma  d\pi'( \gamma )
	 \end{equation}
	is a good decomposition of $T'$; moreover, if $T= T[E, \tau, \theta]$ is rectifiable, then $T'$
	can be written as $T'=T[E, \theta',\tau]$ with $ \theta' \leq \min\{\theta, \pi'(\Lip)\}$.

	\item If $\Mass^{\alpha}(T)<\infty$, for every $\e>0$ there exists $\delta: = \delta(T, \e)>0$ such that for every $\pi' \leq \pi$ with $\pi'(\Lip) \leq \delta$ we have
	\begin{equation}
	\label{eqn:t'decomp}
	\MM (T') \leq \e,
	\end{equation}
	where $T'$ is defined by \eqref{eqn:Tprimo}.
\end{enumerate}

\end{proposition}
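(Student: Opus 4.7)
The plan is to establish the four parts sequentially, each relying on the previous ones.

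For (1), I would integrate the identity $\partial R_\gamma = \delta_{\gamma(\infty)} - \delta_{\gamma(0)}$ from \eqref{eqn:r-gamma-boundary} against $\pi$ to obtain $\partial T = \nu_\infty - \nu_0$, where $\nu_0 := \int_{\Lip}\delta_{\gamma(0)}\, d\pi(\gamma)$ and $\nu_\infty := \int_{\Lip}\delta_{\gamma(\infty)}\, d\pi(\gamma)$; both positive measures have total mass at most $\pi(\Lip)$. By \eqref{eqn:buona-dec-mass-boundaryT}, $\Mass(\partial T) = 2\pi(\Lip)$, so the chain
$$2\pi(\Lip) = \Mass(\partial T) \leq \Mass(\nu_0) + \Mass(\nu_\infty) \leq 2\pi(\Lip)$$
collapses to equalities. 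This is possible only when $\nu_0$ and $\nu_\infty$ are mutually singular, which identifies them with $\partial_- T$ and $\partial_+ T$ respectively.

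For (2), the integral representation together with monotonicity of total variation gives $\|T\|(A) \leq \int_{\Lip} \Haus^1({\rm Im}(\gamma) \cap A)\, d\pi(\gamma)$ for every Borel set $A$, which by Fubini equals $\int_A \pi(\{\gamma : x \in {\rm Im}(\gamma)\})\, d\Haus^1(x)$. Hence $\theta(x) \leq \pi(\{\gamma : x \in {\rm Im}(\gamma)\})$ at $\Haus^1$-a.e.\ $x \in E$. To upgrade this inequality, I would integrate both sides and compare with \eqref{eqn:buona-dec-mass-T}, writing $\int_{\Lip} \Haus^1({\rm Im}(\gamma))\, d\pi(\gamma) = \int_E \pi(\{\gamma : x \in {\rm Im}(\gamma)\})\, d\Haus^1(x)$ again by Fubini. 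Equality of these integrals with $\Mass(T) = \int_E \theta\, d\Haus^1$, combined with the pointwise inequality, forces the equality a.e.

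For (3), I would set $T'' := \int_{\Lip} R_\gamma\, d(\pi - \pi')(\gamma)$, so that $T = T' + T''$. By subadditivity of mass and \eqref{eqn:buona-dec-mass-T} for $T$,
$$\Mass(T) \leq \Mass(T') + \Mass(T'') \leq \int_{\Lip}\Mass(R_\gamma)\, d\pi'(\gamma) + \int_{\Lip}\Mass(R_\gamma)\, d(\pi - \pi')(\gamma) = \Mass(T),$$
so all inequalities become equalities. This proves \eqref{eqn:buona-dec-mass-T} for $T'$, and the boundary identity \eqref{eqn:buona-dec-mass-boundaryT} for $T'$ follows by repeating the singular-measures argument of (1), with $\nu'_\infty \leq \nu_\infty$ and $\nu'_0 \leq \nu_0$ inheriting mutual singularity from (1). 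Thus $\pi'$ is a good decomposition of $T'$. For the rectifiable structure, the equality $\Mass(T) = \Mass(T') + \Mass(T'')$ forces $\|T\| = \|T'\| + \|T''\|$ as positive measures, so $\|T'\| \ll \Haus^1 \trace E$; inspecting the polar decompositions then pins down $\vec T'(x) = \tau(x)$ a.e.\ on the support of $T'$, yielding $T' = T[E, \tau, \theta']$. Applying (2) to $T'$ with its good decomposition $\pi'$ gives $\theta'(x) = \pi'(\{\gamma : x \in {\rm Im}(\gamma)\})$, which together with $\pi' \leq \pi$, the density formula for $T$, and the trivial bound $\pi'(\{\gamma : x \in {\rm Im}(\gamma)\}) \leq \pi'(\Lip)$ produces $\theta' \leq \min\{\theta, \pi'(\Lip)\}$.

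For (4), the bound $\theta'(x)^\alpha \leq \min\{\theta(x)^\alpha, \pi'(\Lip)^\alpha\}$ from (3) is pointwise dominated by the integrable function $\theta^\alpha$ (since $\Mass^\alpha(T) < \infty$) and tends to $0$ pointwise as $\pi'(\Lip) \to 0$. Dominated convergence produces the desired $\delta$. The main subtlety I expect to manage carefully is the rectifiability-plus-orientation step in (3): showing not merely that $T'$ is rectifiable, but that it is supported on the same set $E$ with the very same orientation $\tau$, so that (2) legitimately applies. This is exactly where the full strength of the equality case in the mass subadditivity is exploited, and it is the conceptual core of the proof.
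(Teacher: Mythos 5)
Your proof is correct, and parts (1), (3), (4) follow the paper's argument closely. The one real divergence is in part (2): the paper quotes the external result \cite[Theorem 5.5(iii)]{AlbMar} to obtain the identity of positive measures $\|T\| = \int_{\Lip} \|R_\gamma\|\, d\pi(\gamma)$ directly, from which \eqref{eqn:dens-acycl} drops out by comparing densities; you instead derive only the one-sided inequality $\theta(x) \leq \pi(\{\gamma: x \in {\rm Im}(\gamma)\})$ from sub-additivity of total variation and Fubini, then squeeze it to an equality by integrating and chaining $\int_E \pi(\{\gamma: x \in {\rm Im}(\gamma)\})\, d\Haus^1 \leq \int_{\Lip}\Haus^1({\rm Im}(\gamma))\, d\pi = \Mass(T) = \int_E \theta\, d\Haus^1$, the middle two equalities coming from \eqref{eqn:buona-dec-mass-T}. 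Your route is more self-contained (it uses nothing beyond the defining identities of a good decomposition) at the cost of a slightly longer argument. As a further bonus, in part (3) you make explicit a step the paper leaves tacit: that the equality $\Mass(T) = \Mass(T') + \Mass(T - T')$ forces $\|T\| = \|T'\| + \|T - T'\|$ as positive measures, hence $\|T'\| \ll \Haus^1 \trace E$ with matching orientation (by the strict convexity argument at points where both summands have positive density), so that $T' = T[E, \tau, \theta']$ and applying \eqref{eqn:dens-acycl} to $T'$ is legitimate. Both routes are sound.
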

\begin{proof}
{\it Proof of (1).} It follows from the expression in \eqref{eqn:buona-dec}, from the linearity of the boundary operator and from \eqref{eqn:r-gamma-boundary} that
$$
\partial T =  \int_{\Lip} \partial R_{\gamma} d \pi(\gamma) = \int_{\Lip} \delta_{\gamma(\infty)}  \, d \pi(\gamma)- \int_{\Lip}  \delta_{\gamma(0)} \, d \pi(\gamma)=: S_\infty - S_0.
$$
By the subadditivity of the mass and by \eqref{eqn:buona-dec-mass-boundaryT}
\begin{equation*}
	\begin{split}
\Mass( S_\infty) + \Mass(S_0)
&\leq\int_{\Lip}\Mass(\delta_{\gamma(\infty)}) d \pi(\gamma) + \int_{\Lip}\Mass(\delta_{\gamma(0)}) d \pi(\gamma)
\\&
=\int_{\Lip} \Mass( \partial R_\gamma) d \pi(\gamma)
= \Mass(\partial T) = \Mass(S_\infty-S_0)
	\end{split}
\end{equation*}
From this, we deduce that equality holds in the previous chain of inequalities and that there is no cancellation between $S_\infty$ and $S_0$, namely, they are mutually singular measures. This, in turn, implies that they represent the positive and negative part of the measure $\partial T= S_\infty-S_0$.

%

{\it Proof of (2).} We compute, for every smooth compactly supported test function $\phi:\R^d \to \R$,
$$\int_{\R^d} \phi\theta d\Haus^1\trace E=\int_{\Lip}\left(\int_{\R^d} \phi \mathbbm{1}_{\text{Im}\gamma}d\Haus^1\trace E\right) d\pi=\int_{\R^d}\phi\left(\int_{\Lip} \mathbbm{1}_{\text{Im}\gamma}d\pi \right)d\Haus^1\trace E,$$
where in the first equality we used \cite[Theorem 5.5 (iii)]{AlbMar}, which states that \eqref{eqn:buona-dec} induces an analogous equality between the associated positive measures
, and the fact that $\pi$-a.e. $\gamma$ is simple.\\

{\it Proof of (3).} We write $T= T'+(T-T')$ and, since $T-T'$ is ``parametrized'' by $\pi-\pi'$, we have that
\begin{equation}\label{puppa}
\Mass(T')\leq \int_{\Lip} \Mass(R_\gamma) d \pi'(\gamma), \quad \mbox{and} \quad \Mass(T-T')\leq  \int_{\Lip} \Mass(R_\gamma) d (\pi-\pi')(\gamma).
\end{equation}
 We conclude that
\begin{equation}
\begin{split}
\Mass(T) &\leq \Mass(T')+ \Mass(T-T')
\\
& \leq \int_{\Lip} \Mass(R_\gamma) d \pi'(\gamma)+ \int_{\Lip} \Mass(R_\gamma) d (\pi-\pi')(\gamma) = \int_{\Lip} \Mass(R_\gamma) d \pi(\gamma).
\end{split}
\end{equation}
Since $\pi$ represents a good decomposition of $T$, by \eqref{eqn:buona-dec-mass-T} it follows that equality must hold at each step in the previous inequality. In particular, from \eqref{puppa}, we deduce that
$$\Mass(T') 
= \int_{\Lip} \Mass(R_\gamma) d \pi'(\gamma).$$
The same argument applied to the current $\partial T'$ leads to the proof that the property \eqref{eqn:buona-dec-mass-boundaryT} holds for the good decomposition of $T'$.\\

Since the decomposition \eqref{eqn:Tprimo} is good, then, by the formula \eqref{eqn:dens-acycl}, we get that for $\Haus^1$-a.e. $x\in E$
\begin{equation*}
\begin{split}
\theta'(x)&=  \pi'(\{\gamma: x \in {\rm{Im}}(\gamma) \})
\\&\leq \min\big\{ \pi(\{\gamma: x \in {\rm{Im}}(\gamma) \}), \pi'(\Lip) \big\}=\min\big\{\theta(x), \pi'(\Lip)\}.
\end{split}
\end{equation*}
	This concludes the proof of (3).

{\it Proof of (4).} By the previous point, applied to the good decomposition of $T'$ given in \eqref{eqn:Tprimo}, it follows that 
$$\theta'(x) \leq \min\{ \theta, \delta \}.$$
Therefore 
$$
\MM(T') \leq \int_E  \min\{ \theta(x), \delta \}^ \alpha \, d\Haus^1(x)
$$
and the right-hand side converges to $0$ as $\delta \to 0$ by the Lebesgue dominated convergence Theorem.
\end{proof}

%
%

\subsection{Stability of optimal traffic paths}
The present paper addresses Question~\ref{question1}, which we can now rephrase in rigorous terms as follows.

For every $n\in \N$, let $\mu^-_n,\mu^+_n \in \M(X)$ with the same mass and let $T_n\in \OTP(\mu^-_n,\mu^+_n)$, with $\MM(T_n)$ uniformly bounded.
Assume 
$$
T_n\rightharpoonup T, \quad \mbox{ and }\quad \mu_n^\pm\rightharpoonup \mu^\pm
$$
where $\partial T= \mu^+-\mu^-$ and $\mu^\pm\in\M(X)$.
Is it true that $T\in \OTP(\mu^-,\mu^+)$?

The answer is relatively simple for $\alpha  \in (1-1/d,1]$, relying on the fact that the minimal transport energy $\MM(\nu_n,\nu)$ metrizes the weak$^*$-convergence of probability measures $\nu_n\rightharpoonup \nu$, as stated in the following lemma. 

\begin{lemma}[{\cite[Lemma 6.11]{BCM}}]\label{usefullemma}
Let $\alpha>1-\frac{1}{d}$ and $(\nu_n)_{n \in \N}\subset \mathscr P(X)$ be a sequence of probability measures weakly converging to $\nu \in \mathscr P(X)$. Then we have that
$$\lim_{n\to \infty}\MM(\nu_n,\nu)= 0.$$
\end{lemma}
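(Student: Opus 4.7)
The strategy is to discretize both $\nu$ and $\nu_n$ by collecting their mass at the centers of a common sequence of refining partitions of $X$, bound the cost of this discretization uniformly in $n$ by a dyadic telescoping estimate that is summable precisely when $\alpha > 1-\frac{1}{d}$, and then handle the remaining (finite-dimensional) discrete convergence by weak convergence of the partition masses. The starting observation is that if $T_1 \in \TP(\mu^-, \sigma)$ and $T_2 \in \TP(\sigma, \mu^+)$, then $T_1+T_2 \in \TP(\mu^-, \mu^+)$, and by \eqref{eqn:mass-subadd} we have $\MM(T_1+T_2) \leq \MM(T_1)+\MM(T_2)$. This gives the triangle-type inequality
$$\MM(\nu_n, \nu) \leq \MM(\nu_n, \tilde\nu_n^K) + \MM(\tilde\nu_n^K, \tilde\nu^K) + \MM(\tilde\nu^K, \nu),$$
which reduces the lemma to estimating the three terms on the right for a suitable finite approximation $\tilde\nu^K$, $\tilde\nu_n^K$.

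\textbf{Uniform discretization bound.} For each $k \geq 0$, partition $X$ into finitely many Borel sets $\{Q_i^k\}_{i=1}^{N_k}$ of diameter at most $C 2^{-k}$, with $N_k \leq C(d) 2^{kd}$, each $Q_i^{k+1}$ contained in a unique parent $Q_i^k$, and with $\nu(\partial Q_i^k)=0$ for all $i,k$ (arrangeable by a standard slicing of coordinate hyperplanes). Fix $x_i^k \in Q_i^k$ and set
$$\tilde\nu^k := \sum_i \nu(Q_i^k)\,\delta_{x_i^k}, \qquad \tilde\nu_n^k := \sum_i \nu_n(Q_i^k)\,\delta_{x_i^k}.$$
On each parent cube $Q_i^k$, the measures $\sum_{j:\, Q_j^{k+1}\subset Q_i^k} \nu(Q_j^{k+1})\,\delta_{x_j^{k+1}}$ and $\nu(Q_i^k)\,\delta_{x_i^k}$ have common mass $\nu(Q_i^k)$ and are supported in a set of diameter $\leq C 2^{-k}$; applying Theorem \ref{irrigability} on each $Q_i^k$, summing over $i$ via \eqref{eqn:mass-subadd}, and using Jensen's inequality with $\sum_i \nu(Q_i^k)=1$ gives
$$\MM(\tilde\nu^k, \tilde\nu^{k+1}) \leq C 2^{-k} \sum_i \nu(Q_i^k)^\alpha \leq C 2^{-k} N_k^{1-\alpha} \leq C\, 2^{k(d(1-\alpha)-1)}.$$
The assumption $\alpha > 1-\frac{1}{d}$ makes this exponent strictly negative, hence the series is summable. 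The corresponding traffic paths $T_k$ satisfy $\Mass(T_k) \leq C 2^{-k}$, so the partial concatenations $T^{K,M} := \sum_{k=K}^{M-1} T_k \in \TP(\tilde\nu^M, \tilde\nu^K)$ have uniformly bounded mass and boundary mass in $M$; extracting a weak-$*$ limit $T^K \in \TP(\nu, \tilde\nu^K)$ (using $\tilde\nu^M \rightharpoonup \nu$ and continuity of the boundary operator) and invoking the lower semi-continuity of $\MM$ proved in Section~\ref{s:sci} yields
$$\MM(\nu, \tilde\nu^K) \leq \MM(T^K) \leq C \sum_{k \geq K} 2^{k(d(1-\alpha)-1)},$$
a quantity that tends to $0$ as $K \to \infty$. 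Since $\nu_n$ is also a probability measure, $\sum_i \nu_n(Q_i^k)=1$ and the same bound, with the same constant, holds for $\MM(\nu_n, \tilde\nu_n^K)$ uniformly in $n$.

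\textbf{Discrete convergence and conclusion.} For fixed $K$ the supports of $\tilde\nu^K$ and $\tilde\nu_n^K$ coincide; the requirement $\nu(\partial Q_i^K)=0$ forces $\nu_n(Q_i^K) \to \nu(Q_i^K)$ for every $i$ by weak convergence, so the signed measure $\tilde\nu_n^K - \tilde\nu^K$ has total variation $\rho_n \to 0$. Its positive and negative parts share mass $\rho_n/2$ and are supported in $X$ (of diameter $\leq 2R$); one last application of Theorem \ref{irrigability} gives $\MM(\tilde\nu_n^K, \tilde\nu^K) \leq C\, R\, \rho_n^\alpha \to 0$. Inserting the three bounds into the triangle-type inequality and letting first $n \to \infty$, then $K \to \infty$, gives $\MM(\nu_n, \nu) \to 0$.

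\textbf{Main obstacle.} The decisive ingredient is the telescoping estimate in the second paragraph: summability of $\sum_k 2^{k(d(1-\alpha)-1)}$ is equivalent to $\alpha > 1-\frac{1}{d}$, which explains why this direct dyadic argument must fail at and below the critical threshold. A secondary technical point is justifying that the weak-$*$ limit $T^K$ of the partial concatenations is a bona fide normal $1$-current with the expected boundary and $\alpha$-mass bound; this is ensured by the summability of $\Mass(T_k) \leq C 2^{-k}$ (granting weak-$*$ compactness and a well-defined limit boundary) combined with the lower semi-continuity of $\MM$ established in Section~\ref{s:sci}.
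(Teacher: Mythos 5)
The paper does not prove this lemma; it is cited directly from \cite[Lemma~6.11]{BCM}, so there is no internal argument to compare against. Your proof is the standard dyadic-discretization argument used in the literature (Xia, Bernot--Caselles--Morel) and is essentially correct: the telescoping bound $\MM(\tilde\nu^k,\tilde\nu^{k+1})\le C\,2^{k(d(1-\alpha)-1)}$ is uniform in the probability measure being discretized, its summability is exactly equivalent to $\alpha>1-\frac1d$, the Portmanteau argument handles the finite-dimensional part, and the weak-$*$ compactness plus lower semi-continuity of Section~\ref{s:sci} closes the passage $M\to\infty$.

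One step you assert without justification is $\Mass(T_k)\le C\,2^{-k}$, which you need for the equi-boundedness of the concatenations $T^{K,M}$: Theorem~\ref{irrigability} only bounds the $\alpha$-mass, not the mass. This is easily repaired. Take each $T_{k,i}$ to be optimal (hence acyclic by Theorem~\ref{ottimo_acicl}) and supported in $X$; by Theorem~\ref{s-decompcurr} and Proposition~\ref{p:propr_good_dec}(2) its multiplicity is bounded pointwise by $\nu(Q_i^k)$, so
$$\Mass(T_{k,i})=\int_E\theta\,d\Haus^1\le \nu(Q_i^k)^{1-\alpha}\int_E\theta^\alpha\,d\Haus^1\le \nu(Q_i^k)^{1-\alpha}\,C\,\nu(Q_i^k)^\alpha 2^{-k}=C\,\nu(Q_i^k)\,2^{-k},$$
and summing over $i$ gives $\Mass(T_k)\le C\,2^{-k}$. (Alternatively, the explicit irrigation tree built in the proof of Theorem~\ref{irrigability} satisfies this mass bound by construction.) With that sentence added, the proof is complete.
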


From Lemma \ref{usefullemma} one can easily deduce the following stability result for optimal traffic paths.

\begin{theorem}[{\cite[Proposition 6.12]{BCM}}]\label{stability}
Let $\alpha>1-\frac{1}{d}$. Assume that $(\mu^-_n)_{n \in \N},(\mu^+_n)_{n \in \N} \subset \mathscr{P}(X)$ converge (weakly in the sense of measures) respectively to $\mu^-,\mu^+ \in \mathscr{P}(X)$. Let $T_n\in \OTP(\mu^-_n,\mu^+_n)$ satisfying
$$\sup_{n\in \N}\MM(T_n)< \infty.$$
If $T_n\rightharpoonup T$ for some current $T$, then $T\in \OTP(\mu^-,\mu^+)$.
\end{theorem}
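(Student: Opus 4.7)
The plan is to combine three ingredients: (a) lower semicontinuity of $\MM$ with respect to weak$^*$ convergence (to be established in Section~\ref{s:sci}); (b) Lemma~\ref{usefullemma}, which guarantees that in the regime $\alpha>1-1/d$ one can connect two close probability measures at arbitrarily small cost; and (c) the subadditivity \eqref{eqn:mass-subadd} of the $\alpha$-mass.

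First I would check that $T\in \TP(\mu^-,\mu^+)$. Since $\partial$ is continuous with respect to weak$^*$ convergence of currents, $\partial T_n \rightharpoonup \partial T$; on the other hand $\partial T_n=\mu^+_n-\mu^-_n \rightharpoonup \mu^+-\mu^-$, so $\partial T=\mu^+-\mu^-$. Lower semicontinuity then gives
\begin{equation*}
\MM(T)\le \liminf_{n\to\infty}\MM(T_n).
\end{equation*}

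Next, I would produce a near-optimal upper bound for $\MM(T)$ in terms of an arbitrary competitor. Fix any $S\in\TP(\mu^-,\mu^+)$ with $\MM(S)<\infty$; by Lemma~\ref{usefullemma}, applied to the probability measures $\mu_n^\pm\rightharpoonup\mu^\pm$, one has $\MM(\mu_n^\pm,\mu^\pm)\to 0$, so we may pick $P_n^-\in\TP(\mu_n^-,\mu^-)$ and $P_n^+\in\TP(\mu^+,\mu_n^+)$ with $\MM(P_n^\pm)\to 0$. Then $S_n:=P_n^-+S+P_n^+$ satisfies
\begin{equation*}
\partial S_n=(\mu^--\mu_n^-)+(\mu^+-\mu^-)+(\mu_n^+-\mu^+)=\mu_n^+-\mu_n^-,
\end{equation*}
so $S_n\in \TP(\mu_n^-,\mu_n^+)$ is an admissible competitor against $T_n$.

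By optimality of $T_n$ and subadditivity \eqref{eqn:mass-subadd},
\begin{equation*}
\MM(T_n)\le \MM(S_n)\le \MM(P_n^-)+\MM(S)+\MM(P_n^+).
\end{equation*}
Letting $n\to\infty$ and combining with the lower semicontinuity inequality yields
\begin{equation*}
\MM(T)\le \liminf_{n\to\infty}\MM(T_n)\le \limsup_{n\to\infty}\MM(T_n)\le \MM(S).
\end{equation*}
Since $S\in\TP(\mu^-,\mu^+)$ was arbitrary, $T\in \OTP(\mu^-,\mu^+)$.

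The only nontrivial ingredient is the small-cost connector between $\mu_n^\pm$ and $\mu^\pm$, which is exactly what Lemma~\ref{usefullemma} provides and which crucially uses $\alpha>1-1/d$; without it, the concatenation trick fails, and indeed this is precisely the obstruction that Theorem~\ref{thm:main} has to circumvent by entirely different (local) arguments.
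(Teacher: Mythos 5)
Your proposal is correct and takes essentially the same route as the paper: both rely on Lemma~\ref{usefullemma} to build small-cost connectors $P_n^\pm$, concatenate them with a competitor for the limit problem, and compare via subadditivity and lower semicontinuity (the paper phrases it as a contradiction with $S=T_{opt}$, while you argue directly and then take the infimum over $S$, which is logically the same). The only small point worth spelling out is that the hypotheses of the lower-semicontinuity theorem require equi-bounded $\Mass(T_n)$, which one gets from $\MM(T_n)\le C$ together with the fact that the multiplicity of an optimal $T_n$ with $\pi_n\in\mathscr P(\Lip)$ is at most $1$ (Proposition~\ref{p:propr_good_dec}(2)), so that $\Mass(T_n)\le\MM(T_n)$.
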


Indeed, assuming by contradiction that Theorem~\ref{stability} does not hold for a sequence $T_n \rightharpoonup T$, we find a contradiction by considering an energy competitor for $T_n$ ($n$ large enough) as follows. We take the optimal transport $T_{opt}$ for the limit problem and we add  two traffic paths of arbitrarily small energy that connect respectively $\mu_n^-$ to $\mu^-$, and $\mu^+$ to $\mu_n^+$. This strategy fails for $\alpha \leq 1- \frac 1 d$, since Lemma~\ref{usefullemma} does not hold below the critical threshold (an example of such phenomenon is provided in \cite{CDRM2}). For this reason, we develop in the following sections a more involved strategy to prove the stability of optimal traffic paths.

\section{Lower semi-continuity of the $\alpha$-mass}\label{s:sci}
This section is devoted to the proof of a lower semi-continuity result for the $\alpha$-mass. The statement will be split in two parts. On one side, we prove the lower semi-continuity for normal currents, which for example allows one to prove the classical existence of optimal traffic paths in \eqref{eqn:otp} (see \cite[Proposition 3.41]{BCM}). On the other side, our strategy of proof of Theorem~\ref{thm:main} requires to work with rectifiable currents with boundary of possibly infinite mass, obtained as restriction of normal rectifiable currents to Borel sets. Therefore for rectifiable currents we prove a localized version of the usual lower semi-continuity.
\begin{theorem}\label{lsc}
Let $k\geq 0$, $\alpha\in (0,1]$,
$(T_n)_{n\in\N}
$ be a sequence of $k$-dimensional currents in $X$, and $T$ be a $k$-dimensional current with 
$$\lim_{n\to \infty} \Flat
(T_n - T ) = 0.$$

\begin{enumerate}
\item
If the $T_n$'s and $T$ are rectifiable and $A$ is an open subset of $X$, then 
\begin{equation}\label{e:lsc_res}
\Mass^\alpha(T\trace A ) \leq  \liminf_{n \to \infty} \MM (T_n\trace A).
\end{equation}
\item 
If $T_n$ and $T$ are normal and
$$\sup_{n\in \N}\big\{\Mass(T_n)+\Mass(\partial T_n)\big\}<+\infty,$$
then
\begin{equation}\label{e:lsc}
\Mass^\alpha(T) \leq  \liminf_{n \to \infty} \MM (T_n).
\end{equation}
\end{enumerate}

\end{theorem}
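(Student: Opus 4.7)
The plan is to deduce (1) from (2) by a slicing/exhaustion argument, then handle (2) by a compactness-plus-rectifiability argument together with a Fatou-type pointwise estimate on the multiplicity.

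For (2), I would first dispense with the trivial case $\liminf_n \Mass^\alpha(T_n) = +\infty$. Otherwise, after extracting a subsequence realizing the liminf, the bound $\Mass^\alpha(T_n) < \infty$ forces each $T_n$ to be rectifiable, say $T_n = T[E_n, \tau_n, \theta_n]$. Combined with the uniform normal bound $\Mass(T_n) + \Mass(\partial T_n) \leq C$ and the flat convergence $T_n \to T$, one invokes a rectifiability criterion (White's theorem, or the Ambrosio--Kirchheim-type criterion; alternatively one can follow the polyhedral-approximation approach in \cite[Ch.~3]{BCM}) to conclude that $T$ is itself rectifiable, say $T = T[E, \tau, \theta]$. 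The inequality $\Mass^\alpha(T) \leq \liminf_n \Mass^\alpha(T_n)$ then follows from a Fatou-type argument on the multiplicity, exploiting the concavity of $s \mapsto s^\alpha$ and its subadditivity $(a+b)^\alpha \leq a^\alpha + b^\alpha$ to absorb possible cancellations coming from overlaps between $E_n$ and $E$.

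For (1), fix $A \subset X$ open, write $d(x) := \dist(x, \partial A)$ and consider the exhaustion $A_r := \{x \in A : d(x) > r\}$ for $r > 0$. The key step is the slicing identity $\partial(T_n \trace A_r) = (\partial T_n) \trace A_r - \langle T_n, d, r\rangle$, which for $\mathscr{L}^1$-a.e.\ $r$ yields a slice of finite mass (by the general coarea estimate $\int_0^\infty \Mass(\langle T_n, d, r\rangle)\,dr \leq \Mass(T_n) \Lip(d)$). In particular, for such generic $r$, the rectifiable current $T_n \trace A_r$ admits a normal extension (or directly has finite boundary mass after adjunction of the slice). Averaging the same coarea argument over the \emph{error} currents $R_n, S_n$ in a flat decomposition $T_n - T = R_n + \partial S_n$, one finds $\mathscr{L}^1$-a.e.\ $r$ for which $\Flat\big((T_n - T) \trace A_r\big) \to 0$ as $n \to \infty$. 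Applying part (2) on $\overline{A_r}$ gives
\begin{equation*}
\Mass^\alpha(T \trace A_r) \leq \liminf_{n \to \infty} \Mass^\alpha(T_n \trace A_r) \leq \liminf_{n \to \infty} \Mass^\alpha(T_n \trace A).
\end{equation*}
Letting $r \to 0^+$ and applying monotone convergence to the left-hand side (since $A_r \nearrow A$ and $\theta^\alpha \geq 0$) yields \eqref{e:lsc_res}.

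The main obstacle is the non-commutativity of flat convergence with Borel restriction: one cannot in general conclude $\Flat(T_n \trace A - T \trace A) \to 0$ from $\Flat(T_n - T) \to 0$. The trick to overcome this is the Fubini/coarea argument on the distance function $d$, which provides a one-parameter family of ``good'' restrictions $A_r$ along which both the slice mass and the flat error simultaneously vanish; monotonicity of $\Mass^\alpha$ under restriction to larger sets then closes the loop. A secondary technical point in (2) is that $\Mass^\alpha$ does not bound $\Mass$, so the rectifiability of the limit must be extracted from the joint bound $\sup_n[\Mass(T_n) + \Mass(\partial T_n) + \Mass^\alpha(T_n)] < \infty$ rather than from $\Mass^\alpha$ alone.
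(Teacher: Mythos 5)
The proposal inverts the logical order of the paper and in doing so opens two genuine gaps.

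First, you propose to prove~(1) by applying~(2) to the restricted currents $T_n\trace A_r$ along a good exhaustion $A_r\nearrow A$. But~(2) has strictly stronger hypotheses than~(1): it requires the $T_n$ to be \emph{normal} with $\sup_n[\Mass(T_n)+\Mass(\partial T_n)]<\infty$, whereas~(1) only assumes the $T_n$ to be rectifiable and asks for nothing about $\Mass(T_n)$ or $\Mass(\partial T_n)$. Finiteness of $\Mass^\alpha(T_n\trace A)$ does \emph{not} imply finiteness (let alone uniform boundedness) of $\Mass(T_n\trace A)$, since $\theta^\alpha<\theta$ wherever $\theta>1$; nor is any control on the boundary mass of $T_n\trace A_r$ available uniformly in $n$ from the coarea inequality, which gives finite slice mass only for each fixed $n$ at a.e.\ $r$. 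In addition, the slicing identity $\partial(T_n\trace A_r)=(\partial T_n)\trace A_r-\langle T_n,d,r\rangle$ presupposes that $\partial T_n$ is a measure, i.e.\ that $T_n$ is normal, which is not assumed in~(1). So the exhaustion argument cannot feed into~(2) as stated, and the reduction of~(1) to~(2) breaks down. The paper does the opposite: it proves~(1) directly by reducing, via the integral-geometric identity for $\Mass^\alpha$ and a.e.\ flat convergence of $k$-dimensional slices, to the $k=0$ case where the rectifiable currents are atomic measures and the estimate can be obtained by testing the weak$^*$ convergence on a finite family of disjoint small balls; then~(2) is derived from~(1) together with a separate rectifiability-of-the-limit lemma.

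Second, your treatment of~(2) ends with ``a Fatou-type argument on the multiplicity, exploiting concavity and subadditivity of $s\mapsto s^\alpha$.'' Flat convergence of $T_n$ to $T$ gives no pointwise relation between the multiplicity functions $\theta_n$ (defined on $E_n$) and $\theta$ (defined on $E$); they do not live on a common rectifiable set, and there is no sense in which $\theta_n\to\theta$ $\Haus^k$-a.e. Some reduction to atomic measures (as in the paper's $k=0$ step) is needed precisely to convert flat/weak$^*$ convergence into a usable comparison of densities; the concavity and subadditivity of $s\mapsto s^\alpha$ alone do not supply this. The rectifiability of the limit (your reference to White's criterion, and the paper's Lemma on rectifiability of the limit via slicing) is correctly identified as a necessary ingredient, but by itself it does not close the argument.
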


Using Theorem \ref{lsc}(2) and the compactness of normal currents (see \cite[4.2.17(1)]{FedererBOOK}), the existence of optimal transport paths  in \eqref{eqn:otp} follows via the direct method of the Calculus of Variations.
\begin{corollary}\label{existence}
	Let $\alpha \in (0,1]$. Given two measures $\mu^-,\mu^+ \in \M (X)$ such that $\MM(\mu^- ,\mu^+) <+\infty$, there exists a current $T\in \OTP(\mu^-,\mu^+)$. 
\end{corollary}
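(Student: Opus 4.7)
The plan is to follow the direct method of the Calculus of Variations, which amounts to producing a minimizing sequence, extracting a limit via normal-current compactness, and closing the loop with Theorem~\ref{lsc}(2). The main subtlety is that the functional $\MM$ does not control the ordinary mass $\Mass$ in a one-line fashion (indeed, for $\theta\leq 1$ one has $\theta\leq\theta^\alpha$ but for $\theta$ large the inequality goes the opposite way), so \cite[4.2.17(1)]{FedererBOOK} cannot be invoked directly on an arbitrary minimizing sequence. Overcoming this is the heart of the argument.

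First I would pick $T_n\in\TP(\mu^-,\mu^+)$ with $\MM(T_n)\to\MM(\mu^-,\mu^+)$, and, after replacing each $T_n$ by its acyclic part (which, by definition of acyclicity and Theorem~\ref{ottimo_acicl}-type reasoning, decreases both $\Mass$ and $\MM$ while preserving the boundary $\mu^+-\mu^-$), I may assume every $T_n$ is acyclic. Writing $M:=\mu^-(X)=\mu^+(X)$, I note that $\Mass(\partial T_n)\leq 2M$ is uniformly bounded, and that finiteness of $\MM(T_n)$ forces $T_n$ to be rectifiable, say $T_n=T[E_n,\tau_n,\theta_n]$. Applying Smirnov's Theorem~\ref{s-decompcurr} yields a good decomposition $\pi_n$ with $\pi_n(\Lip)=\tfrac12\Mass(\partial T_n)\leq M$, and Proposition~\ref{p:propr_good_dec}(2) then gives the pointwise bound $\theta_n(x)\leq\pi_n(\Lip)\leq M$.

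This bound is exactly what is needed to transfer the control on $\MM$ to a control on $\Mass$: splitting $E_n=\{\theta_n\leq 1\}\cup\{\theta_n>1\}$, on the first set I use $\theta_n\leq\theta_n^\alpha$, while on the second set I use $\theta_n\leq M$ together with $\Haus^1(\{\theta_n>1\})\leq\int\theta_n^\alpha\,d\Haus^1=\MM(T_n)$. Summing gives
\[
\Mass(T_n)\leq(1+M)\MM(T_n)\leq(1+M)\bigl(\MM(\mu^-,\mu^+)+1\bigr)
\]
for $n$ large. Consequently the sequence $(T_n)$ has uniformly bounded mass and boundary mass, all currents are supported in the compact set $X$, and the compactness theorem for normal currents \cite[4.2.17(1)]{FedererBOOK} (together with the fact that the flat norm metrizes weak$^*$ convergence on such classes, as recalled in the excerpt) produces a subsequence, still denoted $T_n$, with $\Flat(T_n-T)\to 0$ for some normal current $T$ supported in $X$.

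Passing to the limit in $\partial T_n=\mu^+-\mu^-$ (which is continuous with respect to this convergence) gives $T\in\TP(\mu^-,\mu^+)$, and Theorem~\ref{lsc}(2) yields
\[
\MM(T)\leq\liminf_{n\to\infty}\MM(T_n)=\MM(\mu^-,\mu^+),
\]
so $T\in\OTP(\mu^-,\mu^+)$ as required. The only genuinely non-routine step is the $\Mass$-bound on the minimizing sequence; everything else is a standard direct-method packaging.
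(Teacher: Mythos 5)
Your proposal is correct and follows the same route the paper indicates in one sentence --- direct method, compactness of normal currents \cite[4.2.17(1)]{FedererBOOK}, and lower semi-continuity from Theorem~\ref{lsc}(2). You have rightly identified the one non-routine step, namely the uniform bound on $\Mass(T_n)$, which the paper leaves implicit: $\MM(T_n)\leq C$ does not control $\Mass(T_n)$ directly since $\theta\leq\theta^\alpha$ fails for $\theta>1$, and your device (pass to acyclic competitors, invoke Smirnov's Theorem~\ref{s-decompcurr} with Proposition~\ref{p:propr_good_dec}(2) to get $\theta_n\leq\pi_n(\Lip)\leq M$, then split $E_n$ at $\theta_n=1$ to obtain $\Mass(T_n)\leq(1+M)\MM(T_n)$) is sound and, combined with $\Mass(\partial T_n)=2M$, yields exactly the hypotheses of Federer's compactness theorem and of Theorem~\ref{lsc}(2). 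One point worth making explicit: the acyclic replacement cannot be deduced from Theorem~\ref{ottimo_acicl} itself (which presupposes an optimizer), but only from the cycle-removal construction behind it in \cite{PaoliniStepanov} --- the key observation being that $\Mass(T)=\Mass(T-S)+\Mass(S)$ forbids cancellation of multiplicities, so $\theta_{T-S}\leq\theta_T$ pointwise and $\MM(T-S)\leq\MM(T)$, and the iteration terminates by the argument there. With that justification spelled out, your packaging of the direct method is complete and fills the gap the paper elides.
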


The proof of the first part of Theorem~\ref{lsc} employs a characterization of rectifiability by slicing.
The proof of the second point is carried out by slicing our rectifiable currents and reducing the theorem to the lower semi-continuity of $0$-dimensional currents, following some ideas in \cite[Lemma 3.2.14]{depauwhardt}. For this reason, we need to recall some further preliminaries on the slicing of currents. Let $k \leq d$, let $I(d,k)$ be the set of multi-indices of order $k$ in $\R^d$, i.e. the set of $k$-tuples $(i_1,\ldots,i_k)$ with
$$1\leq i_1<\ldots<i_k\leq d,$$
let $\{e_1,\ldots,e_d\}$ be the standard orthonormal basis of $\R^d$, and let $V_{I}$ be the $k$-plane spanned by $\{e_{i_1},\ldots,e_{i_k}\}$ for every $I=(i_1,\ldots,i_k)\in I(d,k)$. Given a $k$-plane $V$, we denote $p_V$ the orthogonal projection on $V$. If $V=V_I$ for some $I$, we simply write $p_I$ instead of $p_{V_I}$. Given a current $T\in \mathbf{N}_k(\R^d)$ with compact support, a Lipschitz function $p:\R^d\to\R^k$ and $y\in\R^k$, we denote by $\langle T,p,y\rangle$ the 0-dimensional \emph{slice} of $T$ in $p^{-1}(y)$ (see \cite[Section 4.3]{FedererBOOK} or \cite[Section 28]{SimonLN} for the case $k=1$). In this paper, we will employ the notion of slicing only to apply two deep known results (contained in Theorem~\ref{thm:rectif-by-slicing} and Lemma~\ref{selodicesimoncicredo}). The following theorem shows that the rectifiability of a current is equivalent to the rectifiability of a suitable family of slices.


\begin{theorem}[{\cite{white_rect}}]\label{thm:rectif-by-slicing}
Let $T \in \mathbf{N}_k(\R^d)$. 
Then $T \in \mathbf{R}_k(\R^d)$ if and only if
$$\langle T,p_I,y\rangle \mbox{ is rectifiable for every $I\in I(d,k)$ and for $\Haus^k$-a.e. $y \in V_I$}.$$ 
\end{theorem}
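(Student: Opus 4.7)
Suppose $T = T[E,\tau,\theta] \in \mathbf{R}_k(\R^d)$ and fix $I \in I(d,k)$. The coarea formula applied to $p_I$ restricted to $E$ gives that for $\Haus^k$-a.e. $y \in V_I$ the fiber $E \cap p_I^{-1}(y)$ is at most countable. The classical slicing theory for rectifiable currents (see \cite[Section~28]{SimonLN} for $k=1$ and \cite[4.3.8]{FedererBOOK} in general) then yields the explicit formula
$$
\langle T, p_I, y\rangle = \sum_{x \in E \cap p_I^{-1}(y)} \varepsilon(x,y)\, \theta(x)\, \delta_x,
$$
where $\varepsilon(x,y) \in \{-1,0,+1\}$ records the sign of the induced orientation on $\tau(x)$ under $p_I$. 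The area formula on $E$ makes the weights summable for a.e.\ $y$, so the slice is a $0$-dimensional rectifiable current.

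\textbf{Hard direction.} For the converse the plan is to reduce the rectifiability of $T$ to a rectifiability property of the measure $\|T\|$ through a tangent/blow-up analysis. Since $T \in \mathbf{N}_k(\R^d)$, the measure $\|T\|$ is a finite Radon measure on $X$. At $\|T\|$-a.e. $x_0$ I would rescale $T$ via $y \mapsto (y-x_0)/r$, push-forward, and extract subsequential weak$^*$ limits as $r\to 0$, obtaining a tangent current $S_{x_0}$. Two facts should be inherited in the limit: the slices $\langle S_{x_0}, p_I, y\rangle$ remain rectifiable $\Haus^k$-a.e., by the continuity of slicing with respect to weak$^*$-convergence of normal currents with equi-bounded boundary mass (see \cite[4.3.1]{FedererBOOK}); and $\partial S_{x_0} = 0$ at $\|T\|$-a.e.\ $x_0$, since the boundary part rescales subcritically. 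Atomicity of $\langle S_{x_0}, p_I, y\rangle$ along every coordinate multi-index $I$, combined with a $k$-dimensional lower density bound on $\|T\|$ derived from the slice-mass identity and the normality of $T$, forces $\supp(S_{x_0})$ to lie inside a single $k$-dimensional linear subspace. Preiss-type criteria on measures with flat tangents then identify a $k$-rectifiable set $E$ carrying $\|T\|$, and standard representation results for normal cycles concentrated on $E$ supply a measurable orientation $\tau$ and density $\theta$ so that $T = T[E,\tau,\theta]$.

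\textbf{Main obstacle.} The hardest step is to translate the almost-everywhere atomicity of the slices $\langle T, p_I, y\rangle$, a statement parametrised by the family of affine $(d-k)$-planes $p_I^{-1}(y)$, into a \emph{pointwise} planarity statement for tangent measures of $\|T\|$ at $\|T\|$-a.e.\ $x_0$. This requires a delicate Fubini-type argument combined with a careful control of $\|\partial T\|$ under rescaling, so that the tangent current $S_{x_0}$ inherits both the slicing hypothesis and the cycle property. A secondary difficulty is the extraction of the density lower bound, without which planarity of tangents alone would not suffice to conclude rectifiability of the ambient measure. Once these measure-theoretic ingredients are in place, the standard machinery relating planarity of tangent measures to rectifiability closes the argument, and the identification of $T$ as an element of $\mathbf{R}_k(\R^d)$ follows.
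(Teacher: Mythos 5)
The paper does not prove this theorem; it is stated as a citation to White's \emph{Rectifiability of flat chains} (Ann.\ of Math.\ 150, 1999), so there is no ``paper's own proof'' for your attempt to be measured against. Evaluating your sketch on its own terms: the easy direction is essentially right and standard. For a rectifiable $T=T[E,\tau,\theta]$ one indeed has, for a.e.\ $y$, that $\langle T,p_I,y\rangle$ is a finite atomic chain supported on $E\cap p_I^{-1}(y)$ with multiplicities determined by $\theta$ and an orientation sign; summability for a.e.\ $y$ follows from the coarea formula, as you say.

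The hard direction, however, is a different story and here there is a genuine gap. The blow-up/Preiss strategy you outline is not White's argument and, as written, does not close. To invoke a Preiss-type rectifiability criterion you need $\|T\|$ to have positive and finite $k$-dimensional density $\|T\|$-a.e., and nothing in your outline actually produces the lower density bound (you acknowledge this as the ``secondary difficulty'' but offer no route to it; the ``slice-mass identity'' alone controls an average, not a pointwise lower density). The claim that the boundary ``rescales subcritically'' also needs a real argument, since $\|\partial T\|$ is merely a finite measure and one only controls $\|\partial T\|(B_r(x_0))$ at $\|\partial T\|$-a.e.\ $x_0$, which is a $\|T\|$-null statement only under further hypotheses. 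Finally, passing from atomicity of the slices of the tangent current $S_{x_0}$ along coordinate directions to $\supp(S_{x_0})$ being a $k$-plane is itself a substantial assertion needing proof. White's actual route is quite different: it works in the category of flat chains, decomposing $\|T\|$ into a rectifiable and a purely $k$-unrectifiable part, and uses the Besicovitch--Federer structure theorem together with the behavior of slicing and the deformation theorem to show that the unrectifiable part must carry no mass if almost all slices are atomic. That argument is ``global'' and measure-structural rather than blow-up based, and in particular never needs a pointwise density bound. If you want to pursue a proof, the structure-theorem route is the one to develop; the tangent-measure route would require you to first establish exactly the density and flatness properties that White's approach circumvents.
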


%

By $Gr(d,k)$ we denote the Grassmannian of $k$-dimensional planes in $\R^d$ and by $\gamma_{d,k}$ we denote the Haar measure on $Gr(d,k)$, i.e. the unique probability measure on $Gr(d,k)$ which is invariant under the action of orthogonal transformations (see \cite[Section 2.1.4]{KrantzParks}).

In the following lemma, we collect some known properties of slices and their behaviour with respect to the $\alpha$-mass and the flat norm. The bounds \eqref{eqn:a-mass-slices} and \eqref{eqn:flat-slices} below are proved in \cite[Corollary 3.2.5(5) and Remark 3.2.11]{depauwhardt} respectively.
 The integral-geometric equality is a consequence of \cite[3.2.26;
2.10.15; 4.3.8]{FedererBOOK} (see also \cite[(21)]{depauwhardt}).

\begin{lemma}\label{selodicesimoncicredo}
Let $R \in \mathbf{R}_k(\R^d)$ and $N \in \mathbf{N}_k(\R^d)$. Then 
for every $ V \in Gr(d,k)$ we have
\begin{equation}
\label{eqn:a-mass-slices}
\int_{\R^k} \Mass^\alpha(\langle R, p_V, y\rangle) \, dy \leq \Mass^\alpha(R),
\end{equation}
\begin{equation}
\label{eqn:flat-slices}
\int_{\R^k} \Flat(\langle N, p_V, y\rangle) \, dy \leq \Flat (N).
\end{equation}
Moreover, there exists $c=c(d,k)$ such that the following integral-geometric equality holds: 

\begin{equation}\label{e:int_geom}
\Mass^{\alpha}(R)=c\int_{Gr(d,k)\times\R^k}\Mass^{\alpha} \big(\langle R,p_V,y\rangle \big) d(\gamma_{d,k}\otimes\Haus^k)(V,y).
\end{equation}

\end{lemma}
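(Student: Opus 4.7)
The plan is to reduce all three statements to the area formula and Federer's integral-geometric theorem, once the 0-dimensional slices have been identified explicitly. For $R = T[E,\tau,\theta] \in \mathbf{R}_k(\R^d)$ and $V \in Gr(d,k)$, the slicing machinery in \cite[Chapter 4]{FedererBOOK} shows that for $\Haus^k$-a.e.\ $y \in V$ the slice $\langle R, p_V, y\rangle$ is a 0-dimensional rectifiable current supported on the finite set $E \cap p_V^{-1}(y)$, with multiplicity at each $x$ equal to $\theta(x)$ up to a sign encoding the orientation of $p_V$ relative to $\tau(x)$. Consequently
\[
\Mass^\alpha(\langle R, p_V, y\rangle) = \sum_{x \in E \cap p_V^{-1}(y)} \theta^\alpha(x).
\]

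To establish \eqref{eqn:a-mass-slices} I would integrate this identity over $y \in V$ and apply the area formula to the 1-Lipschitz map $p_V|_E \colon E \to V$, obtaining
\[
\int_V \Mass^\alpha(\langle R, p_V, y\rangle) \, dy = \int_E \theta^\alpha(x) J_V(x) \, d\Haus^k(x) \leq \int_E \theta^\alpha(x) \, d\Haus^k(x) = \Mass^\alpha(R),
\]
where $J_V(x) := |\wedge_k (p_V|_{\Tan(E,x)})|$ is the tangential Jacobian, bounded by $1$ since $p_V$ is 1-Lipschitz. For the integral-geometric equality \eqref{e:int_geom} I would then average over $V$: by the $O(d)$-invariance of $\gamma_{d,k}$ there is a universal constant $1/c(d,k)$ such that $\int_{Gr(d,k)} J_V(\tau) \, d\gamma_{d,k}(V) = 1/c(d,k)$ for every simple unit $k$-vector $\tau$ (see \cite[3.2.26]{FedererBOOK}); integrating the previous area-formula identity over $V$ and applying Fubini yields $\Mass^\alpha(R) = c(d,k) \int_{Gr(d,k)}\int_V \Mass^\alpha(\langle R, p_V, y\rangle)\, dy\, d\gamma_{d,k}(V)$.

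For the flat slicing inequality \eqref{eqn:flat-slices} I would use the variational definition \eqref{e:flat}: given $\e > 0$, choose $R' \in \mathbf{R}_k(\R^d)$ and $S' \in \mathbf{R}_{k+1}(\R^d)$ with $N = R' + \partial S'$ and $\Mass(R') + \Mass(S') \leq \Flat(N) + \e$. Since slicing commutes with the boundary operator, $\langle N, p_V, y\rangle = \langle R', p_V, y\rangle + \partial \langle S', p_V, y\rangle$ for $\Haus^k$-a.e.\ $y$, so by \eqref{e:flat}
\[
\Flat(\langle N, p_V, y\rangle) \leq \Mass(\langle R', p_V, y\rangle) + \Mass(\langle S', p_V, y\rangle).
\]
Integrating over $y \in V$ and applying the case $\alpha = 1$ of \eqref{eqn:a-mass-slices} to both $R'$ and $S'$ gives $\int_V \Flat(\langle N, p_V, y\rangle)\, dy \leq \Mass(R') + \Mass(S') \leq \Flat(N) + \e$; letting $\e \to 0$ concludes.

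The main obstacle is the rigorous justification of the structure of the slices used in the first paragraph, together with the commutation $\langle \partial S', p_V, y\rangle = \pm \partial \langle S', p_V, y\rangle$ needed in the flat-norm step; both are standard consequences of Federer's slicing theory but require careful bookkeeping of signs, orientations, and the $\Haus^k$-negligible set of $y$ on which the slice fails to be rectifiable or the commutation fails.
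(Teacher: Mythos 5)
The paper does not actually prove this lemma: it is stated as a collection of known facts, with \eqref{eqn:a-mass-slices} and \eqref{eqn:flat-slices} attributed to \cite[Corollary 3.2.5(5), Remark 3.2.11]{depauwhardt} and \eqref{e:int_geom} to \cite[3.2.26; 2.10.15; 4.3.8]{FedererBOOK}. Your sketch essentially reconstructs the standard arguments behind those citations, and the parts concerning \eqref{eqn:a-mass-slices} and \eqref{e:int_geom} are sound: for a.e.\ $y$ the slice is $\sum_{x\in E\cap p_V^{-1}(y)}\pm\theta(x)\delta_x$ (and a.e.\ fiber avoids the set where the tangential Jacobian vanishes, by the area formula applied to that set), so integrating in $y$ via the area formula with $J_V\le 1$ gives \eqref{eqn:a-mass-slices}, and averaging the Jacobian over $Gr(d,k)$, which is constant in the tangent plane by invariance of $\gamma_{d,k}$ and strictly positive, gives \eqref{e:int_geom} with $c=c(d,k)$ after Fubini.

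The one step that does not work as written is the flat-norm inequality: the infimum in \eqref{e:flat} is taken over \emph{all} currents $R\in\D_k(\R^d)$, $S\in\D_{k+1}(\R^d)$, so you are not entitled to pick a near-optimal decomposition with $R'\in\mathbf{R}_k(\R^d)$ and $S'\in\mathbf{R}_{k+1}(\R^d)$; restricting to rectifiable competitors could a priori increase the infimum. What you do get for free is that any near-optimal pair has finite mass, and since $N$ is normal this forces $S'$ (hence also $R'=N-\partial S'$) to be normal. The fix is therefore to drop rectifiability and replace ``the case $\alpha=1$ of \eqref{eqn:a-mass-slices}'' --- which your argument establishes only for rectifiable currents via the area formula --- by the mass--slicing inequality for normal currents, $\int_{\R^k}\Mass(\langle T,p_V,y\rangle)\,dy\le\Mass(T)$ (see \cite[4.2.1, 4.3.2]{FedererBOOK} or \cite{depauwhardt}), applied to $R'$ and $S'$, together with the commutation of slicing with the boundary up to sign, which is harmless since both $\Flat$ and $\Mass$ are invariant under a change of sign. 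With that substitution your argument for \eqref{eqn:flat-slices} is the standard one.
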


\begin{proof}[Proof of Theorem~\ref{lsc}(1)]
{	{\it Step 1: the case $k=0$.} Since a $0$-dimensional rectifiable current $T=T[E,1,\theta]$ is a signed, atomic measure, we write
	$$T \trace A = \sum_{i\in \N} \theta_i \delta_{x_i}$$
	for $(x_i)_{i\in \N}  \subseteq \R^d$ distinct and for $(\theta_i)_{i\in \N} \subseteq \R$ (with possible signs).
	Fix $\e>0$ and let $I \subseteq \N$ be a finite set such that
\begin{equation}
	\label{eqn:finite}
		\MM(T \trace A)-  \sum_{i\in I} |\theta_i |^\alpha \leq \e \qquad \mbox{if }\MM(T \trace A)<\infty
\end{equation}
and 
\begin{equation}
\label{eqn:finite-infin}
 \sum_{i\in I} |\theta_i |^\alpha \geq \frac 1\e \qquad \mbox{otherwise}.
\end{equation}
	Up to reordering the sequences $(x_i)_{i\in \N}$ and $(\theta_i)_{i\in \N}$, we may assume that $I = \{1,...,N\}$ for some $N:= N(\e)$. Set
	$$r := \frac{1}{4} \min\Big\{ \min\{d(x_i,x_j): 1 \leq i <j \leq N\} , \min\{d(x_i,A^c): 1 \leq i \leq N\} \Big\}.$$
	Since $\lim_{n\to \infty} \Flat(T_n - T ) = 0$, then $T_n \rightharpoonup T$ weakly in the sense of measures. Hence 
	for every $i \in \{1,...,N\}$ 
	\begin{equation}\label{dp4}
	\Mass(T\trace  B(x_i,r)) \leq \liminf_{n \to \infty}  \Mass(T_n\trace  B(x_i,r)), \qquad \mbox{for every } i \in \{1,...,N\}.
	\end{equation}	
	By \eqref{dp4} and the elementary inequality
	$ \big(\sum_{i\in \N} |a_i| \big)^\alpha \leq  \sum_{i\in \N} |a_i|^\alpha$ for any $(a_i)_{i\in \N}\subseteq \R,$
	we deduce that for every $i \in \{1,...,N\}$
	\begin{equation}
	\begin{split}
|\theta_i|^\alpha &\leq \big(\Mass(T \trace B(x_i,r)\big)^\alpha \leq \liminf_{n \to \infty} \big( \Mass(T_n\trace  B(x_i,r))\big)^\alpha  
\\&\leq \liminf_{n \to \infty} \MM(T_n\trace  B(x_i,r)).
	\end{split}
	\end{equation}
	Adding over $i$ and observing that the balls $B(x_i,r)$ are disjoint by the choice of $r$, we find that 
	$$ \sum_{i\in I} |\theta_i |^\alpha \leq \liminf_{n \to \infty} \sum_{i=1}^N\MM(T_n\trace  B(x_i,r)) \leq \liminf_{n \to \infty} \sum_{i=1}^N\MM(T_n\trace A).
	$$
	By \eqref{eqn:finite} (or \eqref{eqn:finite-infin} in the case that $\MM(T \trace A)=\infty$) and since $\e$ is arbitrary, we find \eqref{e:lsc_res}.
}

	{\it Step 2 (Reduction to $k=0$ through integral-geometric equality).} We prove now  Theorem~\ref{lsc}(1) for $k > 0$.
	Up to subsequences, we can assume
	$$\lim_{n \to \infty} \MM (T_n\trace A)= \liminf_{n \to \infty} \MM (T_n\trace A) .$$ 
	Integrating in $V\in Gr(d,k)$ the second inequality in Lemma \ref{selodicesimoncicredo} we get
	$$\lim_{n \to \infty} \int_{Gr(d,k)\times \R^k}\Flat(\langle T_n-T,p_V,y\rangle) d(\gamma_{d,k} \otimes \Haus^k)(V,y) \leq \lim_{n \to \infty} \Flat(T_n-T) = 0.$$
	Since the integrand $\Flat(\langle T_n-T,p_V,y\rangle)$ is converging to $0$ in $L^1$
	, up to subsequences, we get
	$$\lim_{n \to \infty}\Flat(\langle T_n-T,p_V,y\rangle)  = 0 \qquad 
	\mbox{for $\gamma_{d,k} \otimes \Haus ^k$-a.e. $(V, y) \in Gr(d,k)\times\R^k$}.$$
	We conclude from Step 1 that
	$$\Mass^\alpha(\langle T,p,y\rangle\trace A ) \leq  \liminf_{n \to \infty} \MM (\langle T_n,p,y\rangle\trace A) .$$
	By \cite[(5.15)]{Ambrosio2000}, for $\Haus^k$-a.e. $y$
	\begin{equation}
	\label{eqn:restr-commuta}
	\langle T\trace A,p_V,y\rangle = \langle T,p_V,y\rangle\trace A.
	\end{equation}
	%
	%
	%
By \eqref{eqn:restr-commuta}, we get the inequality
	\begin{equation}
	\label{eqn:semic-restr}
	\Mass^\alpha(\langle T\trace A ,p_V,y\rangle) \leq  \liminf_{n \to \infty} \MM (\langle T_n\trace A ,p_V,y\rangle) .
	\end{equation}
	The conclusion follows applying twice the integral-geometric equality \eqref{e:int_geom}. Indeed, using the semi-continuity proved for $k=0$ and Fatou's lemma, we get
	\begin{equation}
	\begin{split}
	\Mass^{\alpha}(T\trace A )&=c\int_{Gr(d,k)\times\R^k}\Mass^{\alpha}\big(\langle T\trace A ,p_V,y\rangle\big) d(\gamma_{d,k}\otimes\Haus^k)(V,y)\\
	&\overset{\eqref{eqn:semic-restr}}{\leq} c\int_{Gr(d,k)\times\R^k}\liminf_{n\to \infty}\Mass^{\alpha} \big(\langle T_n\trace A ,p_V,y\rangle \big) d(\gamma_{d,k}\otimes\Haus^k)(V,y)\\
	&\leq c\liminf_{n\to \infty} \int_{Gr(d,k)\times\R^k}\Mass^{\alpha} \big(\langle T_n\trace A ,p_V,y\rangle \big) d(\gamma_{d,k}\otimes\Haus^k)(V,y)\\
	&=\liminf_{n\to \infty}\Mass^{\alpha}(T_n\trace A ).
	\end{split}
	\end{equation}
This concludes the proof of Step 2, so the proof of Theorem \ref{lsc}(1) is complete.
\end{proof}

In order to prove Theorem \ref{lsc}(2), the only property which is missing at this stage is the fact that a normal, non-rectifiable $k$-current cannot be approximated with rectifiable currents with uniformly bounded mass, $\alpha$-mass, and mass of the boundary. This is proved in the following lemma.
\begin{lemma}\label{lemma:limit-rect}
Let $(T_n)\subset \mathbf{R}_k(\R^d)$ and let us assume that
$$\sup_{n\in \N}\{\Mass(T_n)+\Mass(\partial T_n)+\Mass^\alpha( T_n)\}\leq C<+\infty.
$$
If $\lim_{n\to \infty} \Flat
(T_n - T ) = 0$ for some $T \in \mathbf{N}_k(\R^d)$, then $T$ is in fact rectifiable. 
\end{lemma}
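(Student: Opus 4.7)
The plan is to reduce the statement to a $0$-dimensional atomicity claim via the rectifiability-by-slicing criterion (Theorem~\ref{thm:rectif-by-slicing}). It suffices to show that for each multi-index $I \in I(d,k)$ the slice $\langle T, p_I, y\rangle$ is a rectifiable $0$-current, i.e.\ an atomic signed measure, for $\Haus^k$-almost every $y \in V_I$. Since this conclusion concerns $T$ itself, I am free to pass to any subsequence of $(T_n)$ (possibly depending on $I$) in the argument.

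Fix such $I$. From the slicing inequality \eqref{eqn:flat-slices} one has $\int_{V_I} \Flat(\langle T_n - T, p_I, y\rangle)\,dy \leq \Flat(T_n - T) \to 0$, and a Borel--Cantelli-type extraction along a fast-decaying subsequence yields $\Flat(\langle T_n - T, p_I, y\rangle) \to 0$ for $\Haus^k$-a.e.\ $y$. Simultaneously, the bound $\Mass^\alpha(T_n)\leq C$ combined with \eqref{eqn:a-mass-slices} and Fatou's lemma gives $\liminf_{n} \MM(\langle T_n, p_I, y\rangle) < \infty$ for a.e.\ $y$. Thus for a.e.\ $y$, along a further $y$-dependent subsequence, the atomic signed measures $\mu_n := \langle T_n, p_I, y\rangle$ converge in the flat norm, hence weakly-$*$, to $\mu := \langle T, p_I, y\rangle$ while satisfying a uniform bound $\sum_i |\theta_i^{(n)}|^\alpha \leq C_y$.

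The crux is then the following $0$-dimensional claim: if $\mu_n = \sum_i \theta_i^{(n)} \delta_{x_i^{(n)}}$ are atomic signed measures with $\sum_i |\theta_i^{(n)}|^\alpha \leq C$ converging weakly-$*$ to a finite measure $\mu$, then $\mu$ is atomic. I would prove it by truncating at threshold $\e>0$: write $\mu_n = \mu_n^{>\e} + \mu_n^{\leq\e}$ according to whether $|\theta_i^{(n)}|$ exceeds $\e$. The big-atom part has at most $C/\e^\alpha$ atoms, while
\[
\Mass(\mu_n^{\leq\e}) \;=\; \sum_{|\theta_i^{(n)}|\leq\e} |\theta_i^{(n)}|^\alpha\,|\theta_i^{(n)}|^{1-\alpha} \;\leq\; \e^{\,1-\alpha} C.
\]
Picking $\e_j\to 0$ and extracting diagonally, the sequences $\mu_n^{>\e_j}$ converge weakly-$*$ to measures $\mu^{>\e_j}$ that are sums of at most $C/\e_j^\alpha$ atoms, and $\Mass(\mu - \mu^{>\e_j}) \leq C\,\e_j^{\,1-\alpha} \to 0$. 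Since the class of atomic signed measures is closed under total-variation limits, $\mu$ is atomic.

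Applying this slice-wise for each $I\in I(d,k)$, White's criterion forces $T \in \mathbf{R}_k(\R^d)$. The main obstacle I expect is arranging the two slicing controls \eqref{eqn:flat-slices} and \eqref{eqn:a-mass-slices} on a \emph{common} full-measure set of $y$'s along a subsequence where flat convergence holds, which is handled by the Borel--Cantelli/Fatou extraction above; the remaining non-obvious content is precisely the atomicity lemma for limits of atomic measures with bounded $\alpha$-mass, which is exactly where the concavity of $t\mapsto t^\alpha$ plays its role.
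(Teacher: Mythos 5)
Your argument follows the same route as the paper's: Step~1 proves the $0$-dimensional atomicity claim by truncating at a threshold $\delta$, using the $\alpha$-mass bound to get at most $C/\delta^\alpha$ large atoms and the concavity inequality $\Mass(\mu_n^{\leq\delta})\leq C\delta^{1-\alpha}$, then letting $\delta\to 0$; Step~2 reduces the general case to $k=0$ through \eqref{eqn:flat-slices}, \eqref{eqn:a-mass-slices}, Fatou, a $y$-dependent subsequence, and Theorem~\ref{thm:rectif-by-slicing}. The proof is correct and essentially identical to the one in the paper, differing only in order of presentation and the (equivalent) phrasing of the $L^1\Rightarrow$~a.e.\ extraction as Borel--Cantelli.
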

\begin{proof}
{\it Step 1: the case $k=0$}. We prove the lemma for $k=0$, recalling  that a $0$-dimensional rectifiable current $T=T[E,\tau,\theta]$, with $\tau(x)=\pm 1$, is an atomic signed measure (i.e. a measure supported on a countable set). 
 More precisely, we prove the following claim: let $(T_n)_{n\in\N}$ be a sequence of $0$-rectifiable currents $T_n =T[E_n,\tau_n,\theta_n]$ such that $\lim_{n\to \infty} \Flat
(T_n - T ) = 0$ for some $T \in \mathbf{N}_0(\R^d)$ and $\Mass^\alpha(T_n) \leq C$ for some $C>0$; then $T$ is $0$-rectifiable.

Indeed, 
fix $\delta>0$. For any $n\in \N$
\begin{equation*}
\begin{split}
\Mass (T_n \trace &\{ x: \theta_n(x)<\delta \} ) 
= \int_{E_n \cap \{\theta_n<\delta \} } \theta_n(x) \, d \Haus^k(x)
\\&
\leq \delta^{1-\alpha}\int_{E_n \cap \{\theta_n<\delta \} } \theta_n(x)^\alpha \, d \Haus^k(x) \leq {\Mass^\alpha(T_n)}\delta^{1-\alpha} \leq C \delta^{1-\alpha}.
\end{split}
\end{equation*}
Therefore, up to subsequences the measure $T_n \trace \{ x: \theta_n(x)\geq\delta \} $ converges to a discrete measure $T_1$ (indeed the support of the measures $T_n \trace \{ x: \theta_n(x)\geq\delta \}$ consists of a finite number of points, which is uniformly bounded with respect to $n$, due to the bound on $\MM(T_n)$), and the sequence $(T_n \trace\{ x: \theta_n(x)<\delta \})_{n\in\N}$ converges to a signed measure $T_2$ of mass less or equal than $C \delta^{1-\alpha}$.

By the arbitrariness of $\delta$, we conclude that the measure $T_2$ has arbitrarily small mass and that the measure $T_1$ is purely atomic. Since $T= T_1+ T_2$, the statement follows.

{\it Step 2}. We prove the claim for $k>0$. 

We apply the inequalities in Lemma~\ref{selodicesimoncicredo} to our sequence $(T_n)_{n\in\N}$ to deduce that 
\begin{equation}\label{eqn:mass-alpha-slices}
\int_{\R^k} \Mass^\alpha(\langle T_n, p_I, y\rangle) \, dy \leq \Mass^\alpha(T_n) \leq C. 
\end{equation}
$$
\lim_{n\to \infty} \int_{\R^k} \Flat(\langle T_n-T, p_I, y\rangle) \, dy \leq C \lim_{n\to \infty} \Flat (T_n-T) = 0.
$$
Since the sequence of non-negative functions $( \Flat(\langle T_n-T, p_I, \cdot \rangle))_{n \in \N}$ converges in $L^1(\R^k)$ to $0$, up to a (not relabelled) subsequence, we get the pointwise convergence
$$\lim_{n \to \infty} \Flat(\langle T_n-T, p_I, y\rangle) = 0 \qquad \mbox{for }\Haus^{k}\mbox{-a.e. } y\in \R^k.$$
Moreover, by Fatou lemma and \eqref{eqn:mass-alpha-slices} we know that for every $I \in I(d,k)$
$$
\int_{\R^k} \liminf_{n\to \infty} \Mass^\alpha(\langle T_n, p_I, y\rangle) \, dy 
\leq \liminf_{n\to \infty} \int_{\R^k} \Mass^\alpha(\langle T_n, p_I, y\rangle) \, dy <\infty. 
$$
Therefore, we have that
$$\liminf_{n\to \infty} \Mass^\alpha(\langle T_n, p_I, y\rangle)  < \infty \qquad \mbox{for }\Haus^{k}\mbox{-a.e. } y\in \R^k.$$
Hence we are in the position to apply Step 1 to a.e.\ slice $\langle T_n, p_I, y\rangle$ to a $y$-dependent subsequence and deduce that
$$\langle T, p_I, y\rangle \mbox{ is $0$-rectifiable for }\Haus^{k}\mbox{-a.e. } y\in \R^k, \; I \in I(d,k).$$
Finally, we employ Theorem~\ref{thm:rectif-by-slicing} to infer that this property of the slices implies that $T$ is rectifiable.
\end{proof}

\begin{proof}[Proof of Theorem \ref{lsc}(2)]
Let $(T_n)\subset\mathbf{N}_k(\R^d)$ and $T \in \mathbf{N}_k(\R^d)$ be such that $\lim_{n\to\infty}\Flat(T_n-T)=0$. If $T$ is rectifiable, then \eqref{e:lsc} follows by Theorem \ref{lsc}(1) and the fact that non-rectifiable currents have infinite $\alpha$-mass. Otherwise if $T$ is non-rectifiable, then \eqref{e:lsc} follows from Lemma \ref{lemma:limit-rect}.
\end{proof}

\section{Ideas for the proof of Theorem~\ref{thm:main}}\label{s:ideas}
Since the proof of Theorem~\ref{thm:main} develops some new geometric ideas in order to construct a suitable competitor for a minimization problem, we introduce informally the strategy in this section, assuming some significant simplifications, before entering the technical details of the actual argument. At the end of this section of heuristics we give some hints on how to remove the further assumptions.
\bigskip

We can easily reduce to the case that $\mu^\pm, \mu^\pm_n \in \mathscr{P}(X)$. By contradiction, we assume that there exists a sequence $T_n \rightharpoonup T$ of optimizers such that $T$ is not an optimizer, namely there exists $T_{opt}$ and $\Delta>0$ with
$$\MM(T_{opt}) \leq \MM(T) - \Delta, \qquad \partial T_{opt} = \partial T = \mu^+-\mu^-.$$
We aim to find a contradiction by defining a suitable competitor $\tilde T_n$ for $T_n$ for some $n$ large enough, that ``almost follows'' $T_{opt}$ instead of $T$, and satisfies the estimates
$$\MM(\tilde T_{n}) \leq \MM(T_n) - \frac{\Delta}{8}, \qquad \partial \tilde T_{n} = \partial T_n = \mu^+_n-\mu^-_n.$$

{\it (1) Covering of $A^\pm$.} 
First, we choose a countable covering of the sets $A^\pm$ supporting $\mu^\pm$, denoted by $\{ B^\pm_i = B^\pm(x_i,r_i)\}_{i\in \N}$ (see Figure~(\ref{fig:covering-tn}a)) such that
\begin{equation}
\label{eqn:chosen-small}
\sum_{i=1}^\infty r_i, \quad \MM\Big( T \trace  \bigcup_{i=1}^{\infty} B^\pm_i\Big), \quad \mbox{and}\quad \MM\Big( T_{opt} \trace  \bigcup_{i=1}^{\infty} B^\pm_i\Big) \mbox{ are arbitrarily small}.
\end{equation}
This choice is made possible by the assumption that the measures $\mu^\pm$ are supported on sets of $\Haus^1$-measure $0$ and by the fact that $\Mass^\alpha$ is absolutely continuous with respect to $\Haus^1$.
We also select a finite number $N^\pm$ such that 
\begin{equation}
\label{eur:fuori-poco}
\mu^\pm \Big( \Big(\bigcup_{i=1}^{N^\pm} B^\pm_i\Big)^c\Big) \mbox{ is small}.
\end{equation}
For simplicity, in this section we make the assumption that the balls $B_i^\pm$ are pairwise disjoint and that the coverings are finite, namely the quantity in \eqref{eur:fuori-poco} is $0$. 
\begin{figure}[h]
\begin{center}
\includegraphics[scale=0.170]{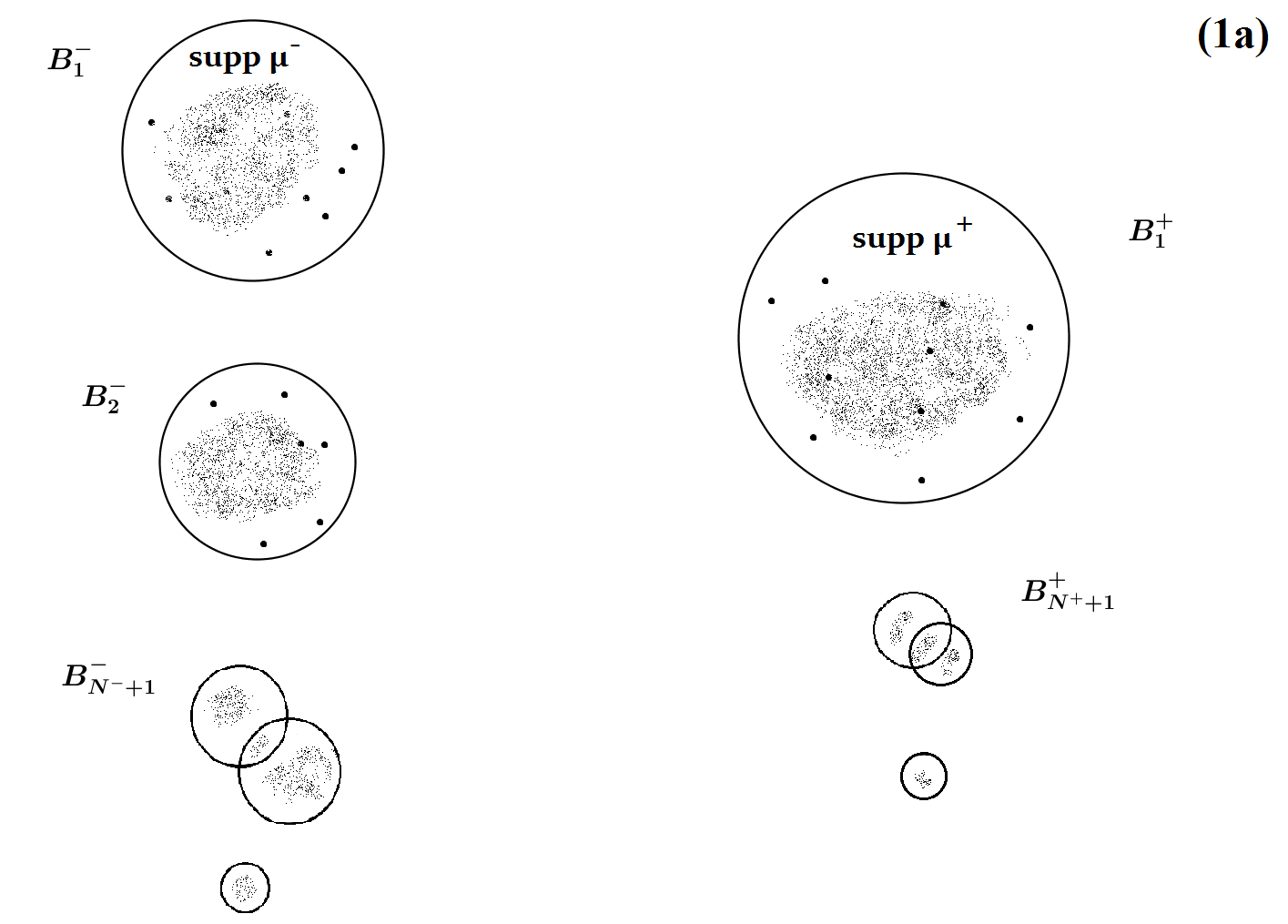} \hspace{1em}
\includegraphics[scale=0.170]{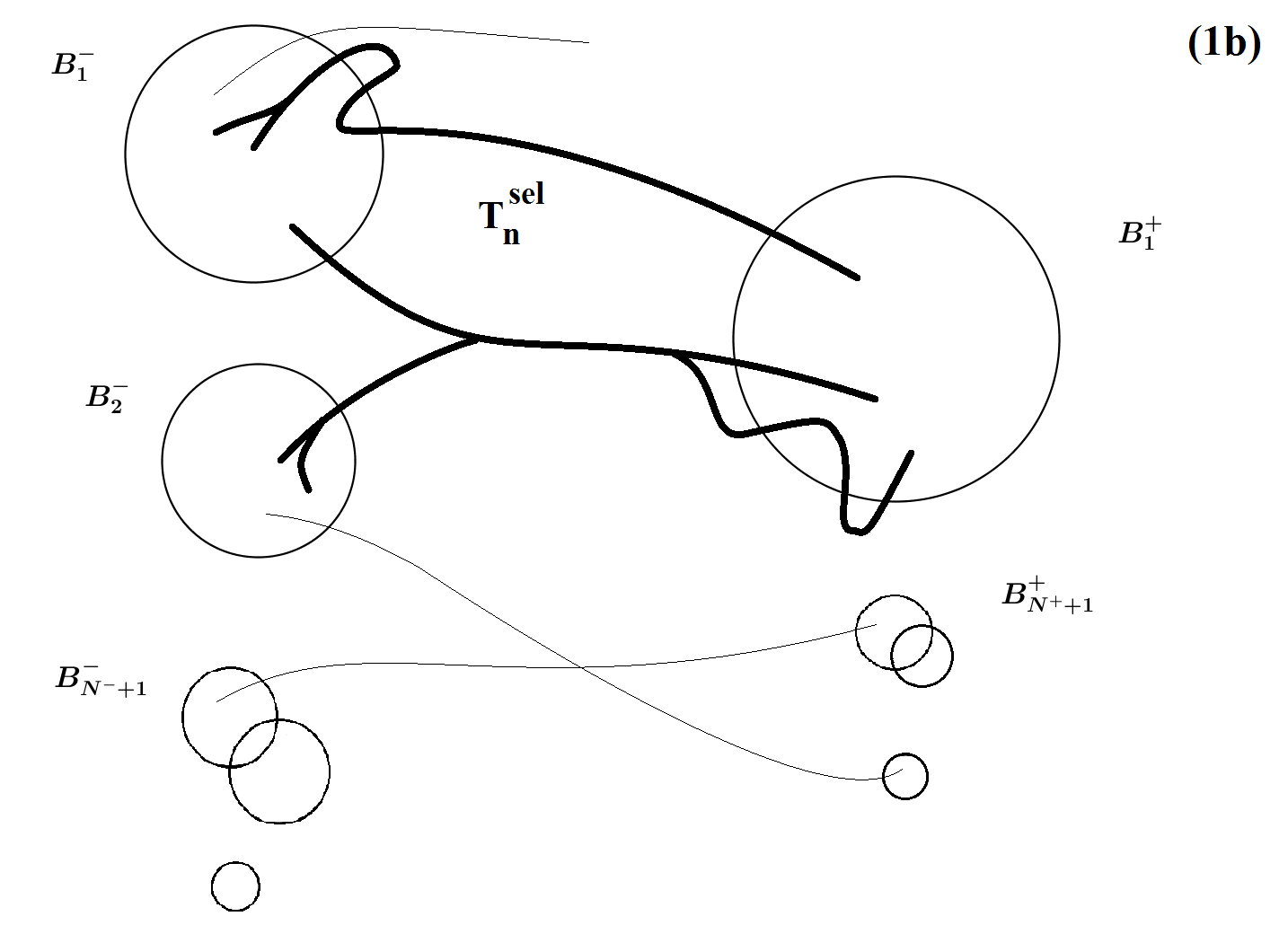}
\caption{Figure (1a) shows the supports of $\mu^+$ and $\mu^-$ and the covering introduced in (1). In Figure (1b) we represented the traffic path $T_n$ and the selection of its curves that begin (respectively end) in the first $N^-$ (respectively $N^+$) balls.}
\label{fig:covering-tn}
\end{center}
\end{figure}

\smallskip

{\it (2) Representation of $T_n$.}  Using Theorem~\ref{s-decompcurr}, we represent each $T_n$ and $T_{opt}$ by a collection of curves weighted by the probability measures $\pi_n$ and $\pi_{opt}$ in $\mathscr{P}(\Lip)$, namely
$$T_n = \int_{\Lip} R_\gamma \, d\pi_n(\gamma), \qquad T_{opt} = \int_{\Lip} R_\gamma \, d\pi_{opt}(\gamma).$$
This representation is essential in order to build an energy competitor for the traffic path $T_n$. 

Intuitively, in the competitor that we want to construct, the mass particles, whose original spatial distribution is represented by $\mu_n^-$, will move for an initial stretch along the curves in the support of $\pi_n$, as long as these curves remain in the balls where they begin. Then, they will be connected to the curves in the support of $\pi_{opt}$ via a ``cheap'' transport supported on the spheres $\partial B^-_i$. Subsequently the particles will move along the curves in the support of $\pi_{opt}$, until they reach the spheres $\partial B^+_i$. From there, another cheap transport supported on the spheres will connect them back to the curves in the support of $\pi_n$ and finally they will be transported to their final destination along the curves of $\pi_n$. Observe that in the process we may have changed the final destination of each single particle, but we preserved the global final particle distribution.

Let us describe now the strategy more in detail. First, we define $\pi^{sel}_n$ as the restriction of $\pi_n$ to curves that start in $\cup_{i=1}^{N^-} B^-_i$ and end in $\cup_{i=1}^{N^+} B^+_i$. We associate to this $\pi_n^{sel}$ a new current $T_n^{sel}$, as represented in Figure~(\ref{fig:covering-tn}b), and we notice that the remaining $\pi_n- \pi_n^{sel}$ carries little mass, by \eqref{eur:fuori-poco} and by the fact that $\partial T_n\rightharpoonup \partial T$.
 We make the further simplifying assumption that
\begin{equation}
\label{eqn:ass-nothing-out}
T_n- T_n^{sel}=0,
\end{equation} even though this is a big simplification since this term cannot be seen as an error in energy.
\smallskip

{\it (3) Construction of a competitor $\tilde T_n^{sel}$ for $T_n^{sel}$.}
We follow the curves representing $T_n^{sel}$ from their starting point, which, by \eqref{eqn:ass-nothing-out} is assumed to be in some $B_i^-$ with $i\in \{1,..., N^-\}$, up to the first time when they touch $\partial B_i^-$. In this way, we define $T^{sel, -}_n$ as in Figure~(\ref{fig:tmeno-toptrestr}a). Similarly, we define  $T^{sel, +}_n$ as the restriction of the curves in  $T^{sel}_n$ from the last time when they touch $\partial B_{i}^+$ up to their final point in $B_{i}^+$ (see again Figure~(\ref{fig:tmeno-toptrestr}a)).

In a similar way, we define $T^{restr}_{opt}$ restricting the curves representing $T_{opt}$ from the first time they exit $\cup_{i=1}^{N^-} B^-_i$ up to the last time they enter $\cup_{i=1}^{N^+} \overline B^+_i$ (see Figure~(\ref{fig:tmeno-toptrestr}b)).

\begin{figure}[h]
\begin{center}
\includegraphics[scale=0.180]{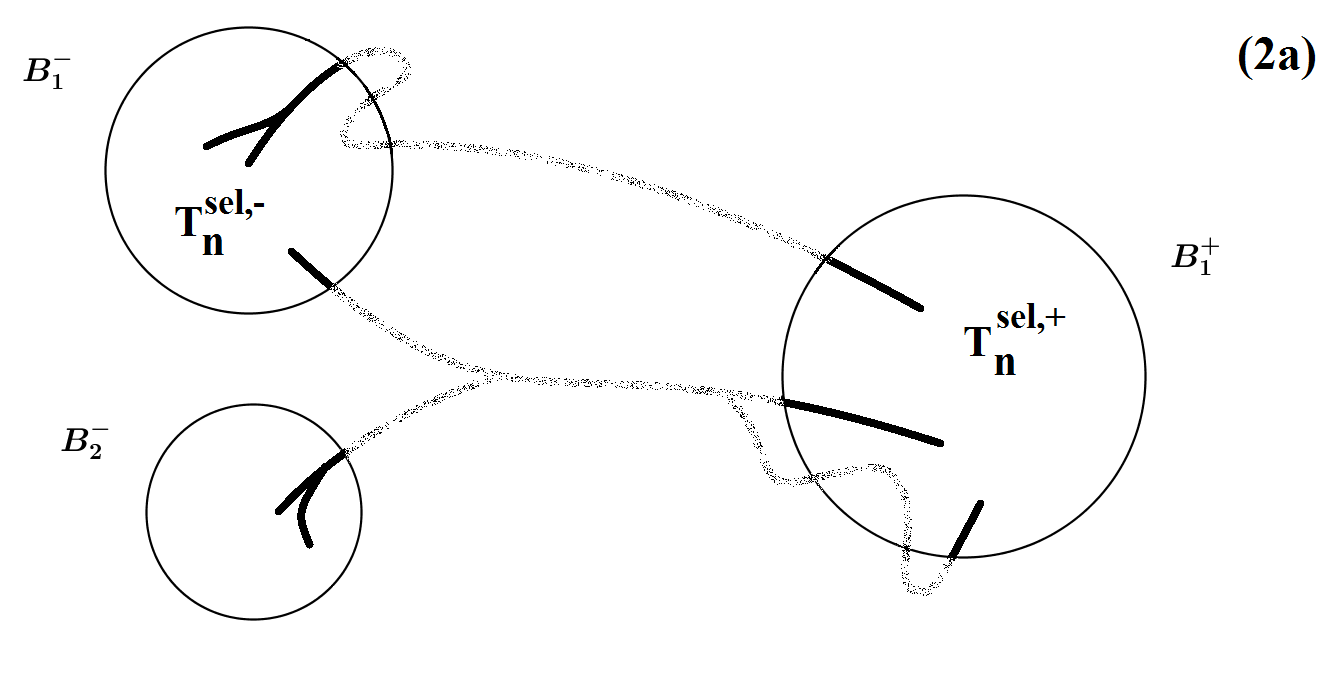} \hspace{1em}
\includegraphics[scale=0.180]{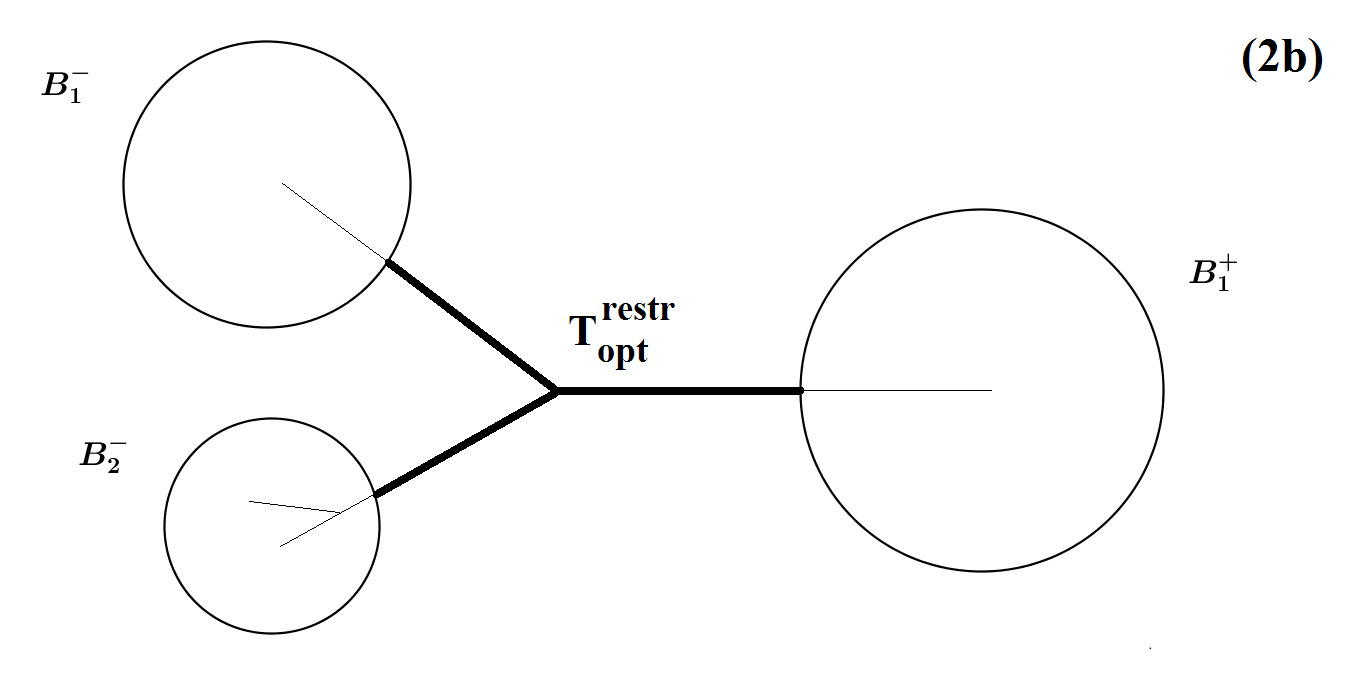}
\caption{In Figure (2a) we mark $T_n^{sel, \pm}$ and in Figure (2b) we mark $T^{restr}_{opt}$.}
\label{fig:tmeno-toptrestr}
\end{center}
\end{figure}

We make the further simplifying assumption that $\mu_n^\pm$ and $\mu^\pm$ have the same quantity of mass in each of the balls $B_{i}^\pm$, $i=1,..., N^\pm$, namely
\begin{equation}
\label{eqn:equal-mass-0}
\mu^\pm_n(B_i^\pm) = \mu^\pm(B_i^\pm) \qquad \mbox{for every } i=1,..., N^\pm,
\end{equation}
or, in other words, that 
\begin{equation}
\label{eqn:equal-mass}
\partial_\pm T^{sel}_n(B_i^\pm) =\partial_\pm T_n(B_i^\pm) = \partial_\pm T_{opt}(B_i^\pm) 
\qquad \mbox{for every } i=1,..., N^\pm.
\end{equation}
We notice that this also implies that
\begin{equation}
\label{eqn:equal-mass-conseq}
\partial_+ T^{sel, -}_n(\partial B_i^-) =\partial_- T^{sel,-}_n(B_i^-)=\partial_- T^{sel}_n(B_i^-)  = \partial_- T_{opt}(B_i^-) =\partial_- T^{restr}_{opt}(\partial B_i^-)
\end{equation}
(and a similar equality holds for $\partial_- T^{sel, +}_n(\partial B_i^+) $).
Indeed, the first equality holds because the traffic path $T^{sel, -}_n$ transports all the mass inside $B_i^-$ on the boundary of $B_i^-$; the last inequality holds because $\pi_{opt}$-a.e. curve exit from $\cup_{i=1}^{N^-} B^-_i$, since it has to end in $\cup_{i=1}^{N^+} \overline B^+_i$ .

Next, we consider a traffic path $T^{conn, - }_n$ that connects  $\partial_+ T^{sel, -}_n$ to $\partial_- T^{restr}_{opt}$ on $\cup_{i} \partial B^-_i$. By \eqref{eqn:equal-mass-conseq}, these two measures can be connected since they have the same mass.  Moreover, by a modification of Theorem \ref{irrigability} (see Lemma \ref{irrigationspher}), the two measures can be connected with finite (and actually small) cost, since they are supported on the union of the $(d-1)$-dimensional spheres $\partial B_i^-$,  and since by assumption in our theorem we required that $\alpha>1-\frac{1}{d-1}$. The cost of this transport is estimated through Lemma~\ref{irrigationspher} by
\begin{equation}
\label{eur:t-conn-}
\MM(T^{conn, - }_n) \leq \sum_{i=1}^{N^-}C_{\alpha,d} r_i^-,
\end{equation}
which is small by \eqref{eqn:chosen-small}.

In a similar way we define a traffic path $T^{conn, +}_n$ that connects  $\partial_- T^{sel, +}_n$ to $\partial_+ T^{restr}_{opt}$ on $\cup_{i} \partial B^+_i$
and enjoys the estimate
\begin{equation}
\label{eur:t-conn+}\MM(T^{conn, + }_n) \leq \sum_{i=1}^{N^+}C_{\alpha,d} r_i^+.
\end{equation}
Finally, we define (see Figure~(\ref{fig:tildetn}))
$$
\tilde T_n^{sel} := T^{sel, -}_n+ T^{conn, -}_n + T^{restr}_{opt} + T^{conn, +}_n+ T^{sel, +}_n.$$

\begin{figure}[h]
\begin{center}
\includegraphics[scale=0.190]{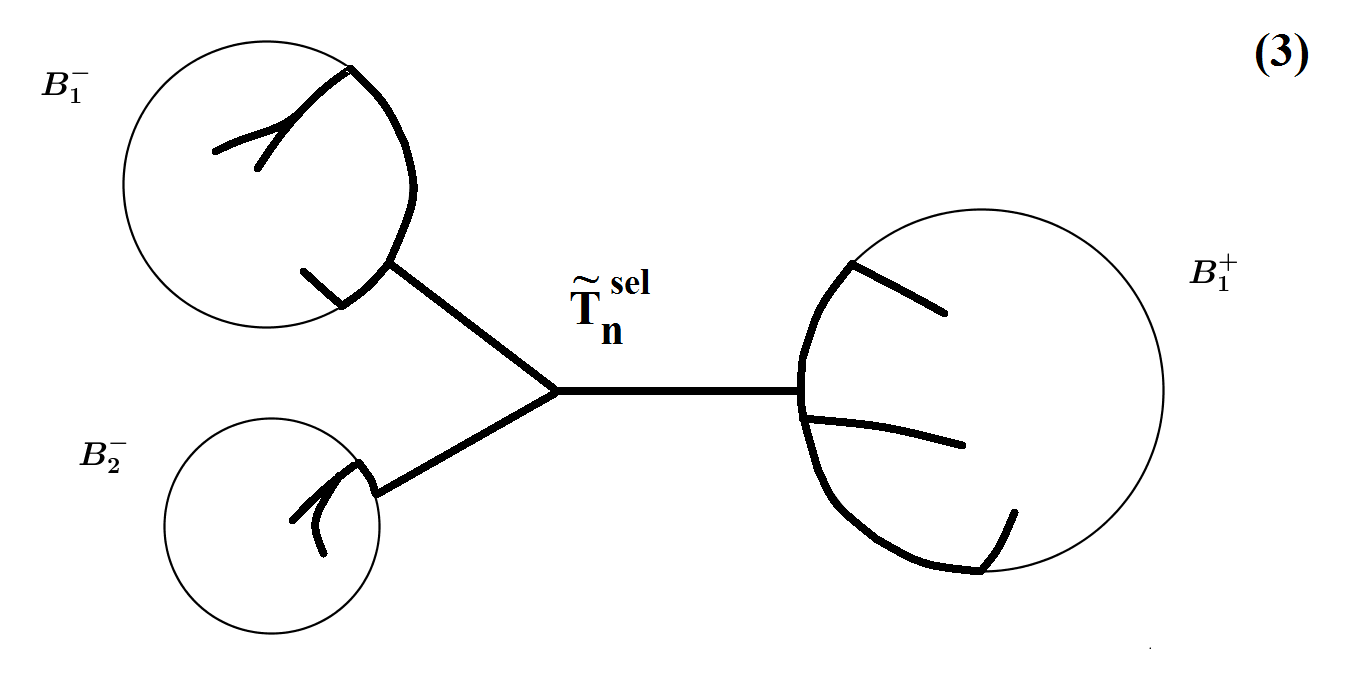}
\caption{The energy competitor $\tilde T_n^{sel}$.}
\label{fig:tildetn}
\end{center}
\end{figure}

\smallskip

{\it (4) Energy estimate for $\tilde T_n^{sel}$ and contradiction.} We show finally that the competitor $\tilde T_n^{sel}$ has strictly less energy than $ T_n$. Since by construction it has the same marginals, then we reach a contradiction. Indeed, by the subadditivity of the $\alpha$-mass, we have
\begin{equation}
\label{eqn:en-est-t-tilde-n-euristics}
\MM(\tilde T_n^{sel}) \leq \MM( T^{sel, -}_n) + \MM(T^{conn, -}_n) + \MM(T^{restr}_{opt}) + \MM(T^{conn, +}_n)+\MM( T^{sel, +}_n)
\end{equation}
By the estimates on the energy of the connections in \eqref{eur:t-conn-} and \eqref{eur:t-conn+} and by the smallness assumptions on the rays, we estimate two terms in the right-hand side of \eqref{eqn:en-est-t-tilde-n-euristics}
\begin{equation}
\label{eqn:en-est-t-tilde-n-euristics2}\MM(T^{conn, -}_n) +  \MM(T^{conn, +}_n) \leq \frac{\Delta}{4}.
\end{equation}
Regarding the first and last terms in the right-hand side of~\eqref{eqn:en-est-t-tilde-n-euristics}, we estimate them with the full energy of $T_n$ inside the balls of the coverings
\begin{equation}
\label{eqn:en-est-t-tilde-n-euristics3}\MM( T^{sel, \pm}_n) \leq  \MM\Big( T_n\trace  \Big( \cup_{i=1}^{N^\pm} \overline B^\pm_i \Big)\Big).
\end{equation}
To bound the energy of $T^{restr}_{opt}$, we first estimate it with the energy of the whole $T_{opt}$. Thanks to the energy gap between $T_{opt}$ and $T$ and \eqref{eqn:chosen-small}, the latter can be estimated choosing  the energy of $T$ inside the coverings below $\Delta/4$:
$$\MM(T^{restr}_{opt}) \leq \MM(T_{opt}) \leq \MM(T) - \Delta \leq \MM\Big(T \trace \Big( \big( \cup_{i=1}^{N^-} \overline B^-_i \big) \cup \big(\cup_{i=1}^{N^+} \overline  B^+_i\big)\Big)^c \Big) -\frac{3\Delta}4
$$ 
By the lower semi-continuity of the $\alpha$-mass on open sets (see Theorem~\ref{lsc}(1))
we deduce that for $n$ large enough
\begin{equation}
\label{eur:toptrestr}
\MM(T^{restr}_{opt})
\leq \MM\Big(T^{sel}_{n} \trace \Big( \big( \cup_{i=1}^{N^-} \overline B^-_i \big) \cup \big(\cup_{i=1}^{N^+} \overline  B^+_i\big)\Big)^c \Big) -\frac{\Delta}2
\end{equation}
Using \eqref{eqn:en-est-t-tilde-n-euristics2}, \eqref{eqn:en-est-t-tilde-n-euristics3}, \eqref{eur:toptrestr} to estimate each term in the right-hand side of \eqref{eqn:en-est-t-tilde-n-euristics} and noticing that the $\alpha$-mass is additive on traffic paths supported on disjoint sets, we find that
\begin{equation*}
\begin{split}
\MM(\tilde T_n^{sel}) \leq & \MM\Big( T^{sel, -}_n\trace  \Big( \cup_{i=1}^{N^-} \overline B^-_i \Big)\Big) + \MM\Big(T^{sel}_{n} \trace \Big( \big( \cup_{i=1}^{N^-} \overline B^-_i \big) \cup \big(\cup_{i=1}^{N^+} \overline  B^+_i\big)\Big)^c \Big) 
\\&+ \MM( T^{sel, +}_n\trace  \Big( \cup_{i=1}^{N^+} \overline B^+_i \Big)) -\frac{\Delta}4 = \MM( T_n^{sel}) -\frac{\Delta}4.
\end{split}
\end{equation*}
This gives a contradiction to the optimality of the energy of $T_n$.

\bigskip
Removing some of the simplifying assumptions that we made in the sketch above is a delicate task and requires new ideas. We briefly describe our strategy. 
\smallskip

In (1), we assumed that the balls $B_i^\pm$ are mutually disjoint. 
If this is not the case, we consider the sets 
$$ C_{i}^\pm:=B_i^\pm \setminus \Big(\cup_{j=1}^{i-1}B_j^\pm \Big)$$ 
as a disjoint cover of the sets $A^\pm$. Then we modify the definition of $T^{sel,-}_n$: for every $=1,..., N^-$, we stop every curve starting in $C^-_i$ as soon as it touches $\partial B^-_i$. The choice to let these curves arrive up to $\partial B^-_i$ (and not only up to $\partial C^-_i$) is related to the fact that $\partial B^-_i$ has a nicer geometry than $\partial C^-_i$ and in particular ensures that the estimate \eqref{eur:t-conn-} holds.
Similarly, we modify $T^{sel,+}_n$.
\smallskip

To remove the assumption $T_n-T^{sel}_n = 0$ in \eqref{eqn:ass-nothing-out}, we consider $\tilde T^{sel}_n + T_n - T^{sel}_n$ as an energy competitor  for $T_n$. To make an energy estimate on this object, we notice first that $T_n-T^{sel}_n $ has small pointwise multiplicity (intensity of flow), since its boundary has small mass and it is made by simple paths (see Proposition \ref{p:propr_good_dec}(2)). Secondly, we prove that the $\alpha$-mass, which in general is sub-additive, is ``almost additive'' between currents which have multiplicities of very different magnitude at every point (Lemma~\ref{lemma:quasiadditive}) and that a suitable lower semi-continuity result holds, involving the restriction of the energy to points with sufficiently large multiplicity (Lemma~\ref{lemma:covering}).

\smallskip
Finally, we need to remove the assumption \eqref{eqn:equal-mass-0} 
: this is another delicate point. Given any $\e>0$, by choosing $n$ large enough, we may assume that
\begin{equation}
\label{eqn:equal-mass-removed}
\partial_\pm T^{sel, \pm}_n(B_i^\pm) \leq (1+\e) \partial_\pm T_{opt}(B_i^\pm).
\end{equation}
Then we use the whole $ (1+\e) T_{opt}$ as a transport outside the balls $\cup_{i=1}^{N^\pm} B_i^\pm$. In view of \eqref{eqn:equal-mass-removed}, this transport might move too much mass from $\cup_{i=1}^{N^-} \partial B_i^-$ to $\cup_{i=1}^{N^+} \partial B_i^+$; however, the amount of mass in excess is small.
Hence, we build another transport with small energy which brings back the mass in excess thanks to Proposition~\ref{l:small-transport}.

\section{Preliminaries for the proof of Theorem~\ref{thm:main}}\label{s:prelim}
\subsection{Restriction of curves to open sets}\label{s:restriction}

Let $A \subseteq \R^d$ be a measurable set. For every $\gamma \in \Lip$, we define the first time $O_A$ in which a curve leaves a set $A$
$$O_A(\gamma) := 
\inf\{ t: \gamma(t) \notin A\},
$$
and the last time $E_A$ in which a curve enters in a set $A$
$$E_A(\gamma) := 
\sup\{ t: \gamma(t) \in A^c\}.
$$
We define the restriction of curves on an interval as a map $res: \Lip \times \{(s,t) \in [0,\infty]^2: s\leq t\} \to \Lip $
\begin{equation}
[res(a,b)(\gamma)](t) = 
\begin{cases}
\gamma(a) \qquad&\mbox{for }t\leq a\\
\gamma(t) \qquad&\mbox{for }a<t<b\\
\gamma(b) \qquad&\mbox{for }t\geq b.
\end{cases}
\end{equation}
In the following, we will often consider the restriction of a curve gamma on a certain set,
or more in general, the restriction of $\gamma$ from an initial time depending on $\gamma$ itself $I(\gamma)$ up to a final time $F(\gamma)$. In this case, we will shorten
$res(I, F)(\gamma):=res(I(\gamma), F(\gamma))(\gamma)$.

The previous definition allows us to state an additional property of good decompositions. 

\begin{proposition}\label{p:propr_good_dec_2}
Let $T \in \mathbf{N}_1(\mathbb{R}^d)$ have a good decomposition $\pi$ as in \eqref{eqn:buona-dec}, and consider two measurable functions $I,F: \Lip \to \R$ with $I \leq F$. Let us assume
 that $\int_{\Lip} \delta_{\gamma(I(\gamma))} d \pi(\gamma)$ and $\int_{\Lip} \delta_{\gamma(F(\gamma))} d \pi(\gamma)$ are mutually singular.
Then the current  
\begin{equation}\label{e:poca_fantasia}
\tilde T:= \int_{\Lip} R_{res(I,F)(\gamma)} d\pi(\gamma)
\end{equation}
has the good decomposition 
$$
\tilde T:= \int_{\Lip} R_{\gamma} d\tilde\pi(\gamma), \qquad \mbox{with } \tilde \pi=({res(I,F)})_\sharp\pi.
$$
 Moreover, if $T= T[E, \tau, \theta]$ is rectifiable, then $\tilde T$ can be written as $\tilde T=T[E,\tau,\tilde \theta]$, with $\tilde \theta \leq \theta$.
\end{proposition}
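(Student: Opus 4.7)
My plan is to verify that $\tilde\pi := (res(I,F))_\sharp \pi$ is a good decomposition of $\tilde T$, and then to deduce the density estimate from Proposition~\ref{p:propr_good_dec}(2). The identity $\tilde T = \int_{\Lip} R_\gamma\, d\tilde\pi(\gamma)$ is automatic from the change-of-variables formula, so what really needs checking is that $\tilde\pi$ is concentrated on non-constant simple curves, together with the two mass identities \eqref{eqn:buona-dec-mass-T} and \eqref{eqn:buona-dec-mass-boundaryT}. For the first claim, since $\pi$-a.e.\ $\gamma$ is simple, the restricted curve $res(I,F)(\gamma)$ is either simple and non-constant, or constant, and in the constant case one has $\gamma(I(\gamma)) = \gamma(F(\gamma))$. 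Let $C \subset \Lip$ be the exceptional set of $\gamma$ whose restriction is constant: the positive measure $\int_C \delta_{\gamma(I(\gamma))}\, d\pi = \int_C \delta_{\gamma(F(\gamma))}\, d\pi$ is majorized by both $\mu_I := \int \delta_{\gamma(I(\gamma))}\, d\pi$ and $\mu_F := \int \delta_{\gamma(F(\gamma))}\, d\pi$, which are mutually singular by hypothesis; hence it vanishes, so $\pi(C) = 0$. The boundary identity then follows from linearity of $\partial$ and \eqref{eqn:r-gamma-boundary}: $\partial \tilde T = \mu_F - \mu_I$, and mutual singularity yields $\Mass(\partial \tilde T) = \Mass(\mu_F) + \Mass(\mu_I) = 2\pi(\Lip) = 2\tilde\pi(\Lip)$.

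The main obstacle is the mass equality \eqref{eqn:buona-dec-mass-T}, since sub-additivity alone only provides $\Mass(\tilde T) \leq \int \Mass(R_{res(I,F)(\gamma)})\, d\pi$. I would argue this by a three-piece splitting: for $\pi$-a.e.\ simple $\gamma$,
\[
R_\gamma = R_{res(T_0(\gamma), I(\gamma))(\gamma)} + R_{res(I(\gamma), F(\gamma))(\gamma)} + R_{res(F(\gamma), T_\infty(\gamma))(\gamma)},
\]
with $\Mass(R_\gamma)$ equal to the sum of the three pieces' masses, since they sit on disjoint portions of the simple curve. Integrating against $\pi$ and applying sub-additivity of $\Mass$ to the resulting decomposition of $T$ bounds $\Mass(T)$ above by the sum of the three integrated masses; by sub-additivity again, this sum is bounded above by $\int \Mass(R_\gamma)\, d\pi = \Mass(T)$ thanks to the good-decomposition identity for $\pi$. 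Hence every intermediate inequality must collapse to an equality, forcing $\Mass(\tilde T) = \int \Mass(R_{res(I,F)(\gamma)})\, d\pi$, which by change of variables equals $\int \Mass(R_\gamma)\, d\tilde\pi$ as required.

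For the rectifiable case $T = T[E, \tau, \theta]$, the image of $res(I,F)(\gamma)$ is contained in the image of $\gamma$, which lies in $E$ for $\pi$-a.e.\ $\gamma$, and the restriction preserves the direction of traversal, so $\tilde T$ has support in $E$, finite mass, and inherits the orientation $\tau$; hence it is rectifiable and may be written as $\tilde T = T[E, \tau, \tilde\theta]$ for some $\tilde\theta \geq 0$. Applying Proposition~\ref{p:propr_good_dec}(2) to the good decomposition $\tilde\pi$ just established gives, for $\Haus^1$-a.e.\ $x \in E$,
\[
\tilde\theta(x) = \tilde\pi(\{\gamma : x \in {\rm{Im}}(\gamma)\}) = \pi(\{\gamma : x \in {\rm{Im}}(res(I,F)(\gamma))\}) \leq \pi(\{\gamma : x \in {\rm{Im}}(\gamma)\}) = \theta(x),
\]
completing the proof.
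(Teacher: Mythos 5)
Your proof is correct and follows essentially the same route as the paper's: both establish non-constancy of $res(I,F)(\gamma)$ for $\pi$-a.e.\ $\gamma$ from the mutual singularity hypothesis, use the three-piece splitting of each simple curve together with sub-additivity of the mass to force equality in \eqref{eqn:buona-dec-mass-T}, deduce \eqref{eqn:buona-dec-mass-boundaryT} from mutual singularity of the two boundary measures, and finally invoke Proposition~\ref{p:propr_good_dec}(2) for the multiplicity bound. The only cosmetic difference is that the paper packages the non-constancy argument through Remark~\ref{remark:A-B-disj}, whereas you argue it directly from the hypothesis.
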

\begin{remark}\label{remark:A-B-disj}
With the notation of the previous proposition, we notice that the assumptions that $\int_{\Lip} \delta_{\gamma(I(\gamma))} d \pi(\gamma)$ and $\int_{\Lip} \delta_{\gamma(F(\gamma))} d \pi(\gamma)$ are mutually singular in Proposition~\ref{p:propr_good_dec_2} is equivalent to the existence of two disjoint sets $E^-, E^+ \subseteq\R^d$ such that $\gamma(I(\gamma)) \in E^-$ and $\gamma(F(\gamma)) \in E^+$ for $\pi$-a.e. $\gamma$.
\end{remark}
\begin{proof}[Proof of Proposition~\ref{p:propr_good_dec_2}]{\it Proof of the good decomposition property.}
	By Remark \ref{remark:A-B-disj}, it is easy to see that 
	\begin{equation}\label{e:assuno}
	\gamma(I(\gamma)) \neq \gamma(F(\gamma)) \quad\mbox{for }\pi-\mbox{a.e. } \gamma,
	\end{equation}
	and so
	$R_{res(I,F)(\gamma)}$ is a non-constant simple curve, for $\pi$-a.e $\gamma$. Moreover, setting  $T=\tilde T+ T^{resid}$ with
$$T^{resid}: = \int_{\Lip} R_{res(0,I)(\gamma)}+R_{res(F,\infty)(\gamma)} d\pi(\gamma),$$
we have, by the sub-additivity of the mass
\begin{equation}
\begin{split}
\Mass(T) &\leq \Mass(\tilde T)+ \Mass (T^{resid}) 
\\&\leq \int_{\Lip} \big( \Mass(R_{res(0,I)(\gamma)})+ \Mass( R_{res(I, F)(\gamma)})+ \Mass( R_{res(F, \infty)(\gamma)}) \big) d\pi(\gamma)\\
&= \int_{\Lip} \Mass(R_\gamma) d \pi(\gamma),
\end{split}
\end{equation}
where in the last line we use that $\pi$-a.e. curve $\gamma$ is simple. Since, by \eqref{eqn:buona-dec-mass-T}, equality holds between the first and  the last term, every inequality should be an equality and in particular
$$\Mass(\tilde T)=
\int_{\Lip} \Mass(R_{res(I,F)(\gamma)}) d\pi(\gamma) = \int_{\Lip} \Mass(R_\gamma) d\tilde \pi(\gamma).
$$
In order to obtain the same equality for $\partial \tilde T$, 
we first notice that, by \eqref{buona-dec-boundary}, it holds
$$
\partial \tilde T =  \int_{\Lip} \partial R_{res(I,F)(\gamma)} d \pi(\gamma) = \int_{\Lip} \big( \delta_{\gamma(F(\gamma))} - \delta_{\gamma(I(\gamma))} \big) d \pi(\gamma).
$$
By assumption, the measures $\int_{\Lip} \delta_{\gamma(I(\gamma))} d \pi(\gamma)$ and $\int_{\Lip} \delta_{\gamma(F(\gamma))} d \pi(\gamma)$ are mutually singular. Hence, 
$$\partial_-\tilde T=\int_{\Lip} \delta_{\gamma(I(\gamma))} d \pi(\gamma)\quad \mbox{ and }\quad \partial_+\tilde T=\int_{\Lip} \delta_{\gamma(F(\gamma))} d \pi(\gamma),$$
which yields, by \eqref{e:massbord},
$$\Mass(\partial\tilde T)=2\Mass(\partial_-\tilde T)= 2\pi(\Lip)=\int_\Lip\Mass(\partial R_\gamma) d\tilde\pi(\gamma).$$
This concludes the proof that \eqref{e:poca_fantasia} is a good decomposition. 

\smallskip
{\it Proof of the estimate on the multiplicity.}
By the good decomposition property proved above and the formula \eqref{eqn:dens-acycl}, we get that for $\Haus^1$-a.e. $x\in E$
\begin{equation}
\begin{split}
0 \leq \tilde \theta(x)&= \tilde \pi(\{\gamma: x \in {\rm{Im}}(\gamma) \})= \pi(\{\gamma: x \in {\rm{Im}}({res(I,F)(\gamma)}) \})\\
&\leq \pi(\{\gamma: x \in {\rm{Im}}(\gamma) \})=\theta(x),
\end{split}
\end{equation}
where in the inequality we used that if $x \in {\rm{Im}}({res(I,F)(\gamma)})$ then $x \in {\rm{Im}}(\gamma)$.
This concludes the proof of the claim.
\end{proof}

\subsection{Dimension reduction}
The next lemma is a fundamental tool for the proof of our main result. Indeed it allows us to transport measures which are supported on $(d-1)$-dimensional spheres, decreasing the critical threshold for which we have quantitative upper bounds on the minimal transport energy (see Theorem~\ref{irrigability}). Its proof is a simple combination of Theorem \ref{irrigability} and Proposition \ref{p:push_forw}.

\begin{lemma}\label{irrigationspher}
Let $\alpha > 1-\frac 1{d-1}$. Given two measures $\mu^-$ and $\mu^+$ with mass $M$ in $\R^d$ supported on $\partial B(x,r)$, there exists a current $T\in \TP(\mu^-,\mu^+)$ such that
$$\Mass^\alpha(T)\leq C_{\alpha,d}M^\alpha r,$$
where $C_{\alpha,d}$ is a constant depending only on $\alpha$ and $d$.
\end{lemma}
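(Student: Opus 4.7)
The plan is to reduce the problem on the $(d-1)$-dimensional sphere $\partial B(x,r) \subset \R^d$ to a problem in $\R^{d-1}$ by means of a Lipschitz parametrization, apply Theorem~\ref{irrigability} in dimension $d-1$ (which is exactly why we need the weakened threshold $\alpha > 1-\frac{1}{d-1}$), and then push the resulting transport back up to $\R^d$, using Proposition~\ref{p:push_forw} to keep track of the $\alpha$-mass.

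More precisely, first I would translate so that $x=0$, and consider a Lipschitz parametrization $\phi_r : K \to \partial B(0,r)$, where $K \subset \R^{d-1}$ is a compact set (for instance the spherical-coordinate box $[0,\pi]^{d-2}\times [0,2\pi]$). By rescaling the unit-sphere parametrization by $r$, one obtains $\Lip(\phi_r) \leq C_d \, r$ while $\mathrm{diam}(K) \leq C_d'$ depends only on the dimension. The map $\phi_r$ is surjective, so one can fix a Borel right-inverse $\sigma : \partial B(0,r) \to K$ (or more concretely, pick one of the finitely many branches of the inverse on each piece of a finite Borel partition of the sphere on which $\phi_r$ is a bi-Lipschitz homeomorphism) and define $\tilde \mu^\pm := \sigma_\sharp \mu^\pm$. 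These are positive measures of mass $M$ on $K\subset\R^{d-1}$, and by construction $(\phi_r)_\sharp \tilde \mu^\pm = \mu^\pm$.

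Now since $\alpha > 1 - \frac{1}{d-1}$, Theorem~\ref{irrigability} applied in $\R^{d-1}$ yields a traffic path $\tilde T \in \TP(\tilde\mu^-,\tilde\mu^+)$ with
\[
\Mass^\alpha(\tilde T) \leq C_{\alpha,d-1}\, M^\alpha\, \mathrm{diam}(K) \leq C_{\alpha,d-1} C_d'\, M^\alpha.
\]
Set $T := (\phi_r)_\sharp \tilde T$. Since $\phi_r$ is Lipschitz, $T$ is a rectifiable $1$-current in $\R^d$ supported on $\partial B(0,r)$, and by naturality of the boundary,
\[
\partial T = (\phi_r)_\sharp \partial \tilde T = (\phi_r)_\sharp (\tilde\mu^+ - \tilde\mu^-) = \mu^+ - \mu^-.
\]
Finally, Proposition~\ref{p:push_forw} gives
\[
\Mass^\alpha(T) \leq \Lip(\phi_r)\,\Mass^\alpha(\tilde T) \leq C_d\, r \cdot C_{\alpha,d-1} C_d'\, M^\alpha = C_{\alpha,d}\, M^\alpha\, r,
\]
which is the required estimate (and the support condition $\supp(T)\subset \partial B(x,r)$ is built in).

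The only nontrivial point is the construction of the parametrization $\phi_r$ with the correct scaling of the Lipschitz constant and of the Borel right-inverse $\sigma$, but both are standard: the spherical-coordinate map works explicitly, and measurable selection holds since the sphere is a smooth compact manifold parameterized by finitely many bi-Lipschitz charts. No genuine obstacle should arise.
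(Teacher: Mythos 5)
Your proof is correct and takes essentially the same approach as the paper's: parametrize the sphere by a Lipschitz map from a compact subset of $\R^{d-1}$, pull the measures back via a Borel right-inverse, apply Theorem~\ref{irrigability} in $\R^{d-1}$ (where $\alpha>1-\tfrac{1}{d-1}$ is exactly the right threshold), push forward, and estimate via Proposition~\ref{p:push_forw}. The only differences are cosmetic: you place the scaling factor $r$ in the Lipschitz constant of $\phi_r$ while keeping $\mathrm{diam}(K)$ dimensional, whereas the paper uses a $1$-Lipschitz ``wrapping'' map from $\overline{B^{d-1}(0,Cr)}$ onto $\partial B(x,r)\setminus\{p\}$ (with $p$ chosen so that $\mu^\pm(\{p\})=0$) so that the scale sits in the diameter of the domain; the final bound $C_{\alpha,d}M^\alpha r$ is identical either way.
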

\begin{proof}
In this proof we denote by $B^{d-1}(0,r)$ the open ball in $\R^{d-1}$ centred at 0 with radius $r$. Let $p\in \partial B(x,r)$ such that $\mu^\pm(\{p\})=0$. It is easy to see that there exists a constant $C:= C(d)$ and a $1$-Lipschitz function $f:\overline{B^{d-1}(0,Cr)}\to \partial B(x,r)\subset \R^d$ which ``wraps'' $B^{d-1}(0,Cr)$ onto $\partial B(x,r) \setminus \{p\}$. More precisely, we can require that
$$f^{-1}(\{p\})=\partial B^{d-1}(0,Cr) \qquad \mbox{and} \qquad f \mbox{ is injective on }B^{d-1}(0,Cr).$$
Let $\nu^\pm:=[(f|_{\partial B(x,r) \setminus \{p\} } )^{-1}]_\sharp\mu^{\pm}$ and observe that $f_\sharp  \nu ^\pm= \mu^\pm$. By Theorem \ref{irrigability}, there exists $S\in \TP(\nu^-,\nu^+)$ with $\Mass^\alpha(S)\leq C_{\alpha,d}M^\alpha 2r$.
 
We observe that $T:=f_\sharp S$ belongs to $\TP(\mu^-,\mu^+)$, indeed
$$\partial (f_\sharp S)=f_\sharp \partial S=f_\sharp ((f|_{\partial B(x,r) \setminus \{p\} } )^{-1}_\sharp\mu^{+}-(f|_{\partial B(x,r) \setminus \{p\} } )^{-1}_\sharp\mu^{-})=\mu^{+}-\mu^{-}=\partial S,$$
and trivially $T$ is supported on $\partial B(x,r)$. The estimate on the $\alpha$-mass of $T$ follows immediately from Proposition \ref{p:push_forw}.
\end{proof}

\subsection{Covering results}
In this subsection we prove two elementary covering results. Referring to the notation introduced in Section \ref{s:ideas}, Lemma \ref{lemma:covering-other} allows us to cover the sets $A^\pm$ with balls satisfying \eqref{eqn:chosen-small} such that for every $n\in\N$ almost no curve in the representation of $T_n$ begins or ends on the corresponding spheres. With Lemma \ref{lemma:covering} we want to guarantee that it is possible to cover the sets $\supp(\mu^-)$ and  $\supp(\mu^+)$, which by assumption are disjoint, with two disjoint families of small balls. This time we do not require any smallness assumption on the sum of the radii, but we want to control the number of balls in each family.

\begin{lemma}\label{lemma:covering-other}
	Consider a family of $1$-currents $T,T', (T_n)_{n \in \N}\in \mathbf{N}_1(\mathbb{R}^d)\cap \mathbf{R}_1(\mathbb{R}^d)$, such that $\MM(T), \MM(T')<+\infty$ and $\partial_\pm T=\partial_\pm T'$. Given a set $A$ such that $\Haus^1(A)=0$, and $\e>0$, there exists a covering of $A$ with open balls $(B(x_{i},r_{i}))_{i\in \N}$
	such that
	$$\partial_\pm T(\partial B(x_{i},r_{i}))= \partial_\pm T_n(\partial B(x_{i},r_{i}))=0 \qquad \mbox{for every }i,n \in \N,$$
	\begin{equation}\label{eqn:ts-cov}
		\MM(T\trace \bigcup_{i \in \N} \overline{ B(x_{i},r_{i}) })<\e\qquad \mbox{ and } \qquad \MM(T'\trace \bigcup_{i \in \N} \overline{ B(x_{i},r_{i}) })<\e,
	\end{equation}
	and
\begin{equation}\label{eqn:ts-cov1}	
	\sum_{i=1}^\infty r_{i}<\e.
	\end{equation}
\end{lemma}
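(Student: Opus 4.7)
The plan is: first find an open set $U \supset A$ where the $\alpha$-mass measures of both $T$ and $T'$ are bounded by $\e$; then cover $A$ by balls contained in $U$ with arbitrarily small sum of radii; and finally perturb each radius to avoid the spheres where the countably many boundary measures $\partial_\pm T$ and $\partial_\pm T_n$ place positive mass.

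For the first step, writing $T = T[E, \tau, \theta]$, the hypothesis $\MM(T) < \infty$ says exactly that $\mu_T := \theta^\alpha \Haus^1 \trace E$ is a finite positive Borel measure on $\R^d$, and analogously for $\mu_{T'}$. Both measures are absolutely continuous with respect to $\Haus^1$, so in particular they vanish on the $\Haus^1$-null set $A$. By outer regularity of finite Borel measures on $\R^d$, I pick an open set $U \supset A$ with $\mu_T(U) < \e$ and $\mu_{T'}(U) < \e$.

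For the second step, since $\Haus^1(A) = 0$ and $U$ is open, I cover $A$ by countably many open balls $B(x_i, s_i)$ with $\sum_i s_i < \e/4$ and $\overline{B(x_i, 2 s_i)} \subset U$. The containment is arranged in the standard way by decomposing $A$ into the countable union of the pieces $A_k := \{a \in A : \mathrm{dist}(a, U^c) \geq 1/k\}$, each of which has vanishing $\Haus^1$-measure and therefore admits a cover by balls of arbitrarily small radii summing, say, below $\e/2^{k+2}$ and with radii below $1/(2k)$ so that the doubled balls remain in $U$.

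The third step is the only delicate point. For each fixed $i$, the family $\{\partial_\pm T\} \cup \{\partial_\pm T_n : n \in \N\}$ consists of countably many finite positive measures on $\R^d$, and each single finite measure assigns positive mass to at most countably many of the pairwise disjoint concentric spheres $\{\partial B(x_i, r) : r > 0\}$. Hence the set of bad radii $r \in (s_i, 2 s_i)$ for which any of these measures charges $\partial B(x_i, r)$ is at most countable, and I select $r_i \in (s_i, 2 s_i)$ avoiding all of them. With this choice, $B(x_i, s_i) \subset B(x_i, r_i)$ so that $A$ is still covered; $\overline{B(x_i, r_i)} \subset \overline{B(x_i, 2 s_i)} \subset U$, so $\MM(T \trace \bigcup_i \overline{B(x_i, r_i)}) \leq \mu_T(U) < \e$ and similarly for $T'$, which gives \eqref{eqn:ts-cov}; the estimate $\sum_i r_i \leq 2 \sum_i s_i < \e/2 < \e$ is \eqref{eqn:ts-cov1}; and the vanishing of $\partial_\pm T$, $\partial_\pm T_n$ on each $\partial B(x_i, r_i)$ holds by construction. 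The main obstacle, though mild, lies precisely in accommodating simultaneously the countable family of boundary measures, which is resolved via the countability of bad radii for each fixed center.
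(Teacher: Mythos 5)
Your proof is correct, and it takes a genuinely different route from the paper's. You invoke outer regularity of the finite Borel measures $\mu_T$ and $\mu_{T'}$ (which are absolutely continuous with respect to $\Haus^1$, hence vanish on $A$) to produce directly an open set $U\supset A$ of small $\alpha$-mass, and then you cover $A$ by balls whose closures fit inside $U$, perturbing each radius to dodge the countably many bad spheres. The paper instead constructs, for each $j$, a covering with sum of radii $<2^{-(j+1)}$ (with the same radius-perturbation trick), forms the nested decreasing sequence $B^{(j)}=\bigcup_{j'\geq j}A^{(j')}$ whose intersection is $\Haus^1$-null, and uses continuity from above of the finite measure $\nu(E)=\MM(T\trace E)+\MM(T'\trace E)$ to conclude that $\nu(A^{(j)})\to 0$. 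What your route buys is directness: one application of Ulam-type outer regularity replaces the iterated covering and the limiting argument. What the paper's route buys is avoiding any explicit appeal to regularity, relying only on $\sigma$-additivity and continuity from above of a single auxiliary measure. One small bookkeeping remark: to guarantee $\overline{B(x_i,2s_i)}\subset U$ with the threshold $s_i<1/(2k)$ you should center the balls at points of $A_k$ (which is always possible, up to a harmless factor in the radii); with an arbitrary center intersecting $A_k$ you would need $s_i<1/(3k)$. Also note that the hypothesis $\partial_\pm T=\partial_\pm T'$ plays no role in either proof; it is there for the application, not for the covering construction.
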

\begin{proof}
	We define on $\R^d$ the finite measure $\nu$ by
	\[
	\nu(E)=\MM(T\trace E)+\MM(T'\trace E)\quad \text{for every Borel set $E$}
	\] 
	and we observe that $\nu$ vanishes on $\Haus^1$-null sets.
	
	Since $\Haus^1(A)=0$, for every $j \in \N$ we can find a covering of $A$ with balls $\{B(x^{(j)}_{i},r^{(j)}_{i})\}_{i \in \N}$ such that 
	$$\sum_{i \in \N} r^{(j)}_{i} < \frac{1}{2^{j+1}},$$
	and moreover, since for every point $x$ there are only countably many radii $r$ such that $\partial_\pm T(\partial B(x,r))\neq 0$ or $\partial_\pm T_n(\partial B(x,r))\neq 0$ for some $n$, then we can also assume (possibly enlarging slightly the previous radii) that	
	$$ \partial_\pm T(\partial B(x^{(j)}_{i},r^{(j)}_{i}))=\partial_\pm T_n(\partial B(x^{(j)}_{i},r^{(j)}_{i}))= 0 \qquad \mbox{for every }i,n \in \N.$$
We define 
	$$A^{(j)} = \bigcup_{i \in \N} \overline{ B(x^{(j)}_{i},2r^{(j)}_{i})}.$$
We consider the decreasing sequence of sets and their intersection
	$$(B^{(j)})_{j\in\N}:= \bigcup_{j' \geq j} A^{(j')}, \qquad B = \bigcap_{j \in \N} B^{(j)}.$$
	We notice that $A^{(j)} \subseteq B^{(j)}$ for every $j \in \N$
	and that $\Haus^1 (B) = 0$, because $B$ can be covered with each $B^{(j)}$, which in turn is made by balls whose radii satisfy the estimate
	$$\sum_{j'\geq j} \sum_{i \in \N} r^{(j')}_{i} < \sum_{j'\geq j}\frac{1}{2^{j'+1}} =\frac{1}{2^{j}}.$$
	We consequently have on the decreasing sequence of sets $(B^{(j)})_{j\in\N}$:
	$$\lim_{j \to \infty} \nu(B^{(j)})= \nu(\cap_j B^{(j)}) = \nu (B)
	$$
	and we conclude that $\nu(B)=0$ and that
	$$\lim_{j \to \infty} \nu(A^{(j)}) \leq \lim_{j \to \infty} \nu(B^{(j)})=0
	.$$
	Therefore, choosing $j$ large enough, the covering $(B(x^{(j)}_{i},r^{(j)}_{i}))_{i \in \N}$ satisfies the conditions in \eqref{eqn:ts-cov}, \eqref{eqn:ts-cov1}.
\end{proof}

\begin{lemma}\label{lemma:covering}
Given $r>0$ and $K \subset X$, with  $K$ compact.
There exists a finite number $M:=M(X,r)$ and a family of balls $\{B(x_i,r_i)\}_{i=1}^{M}$, covering $K$, such that
$$r_i< \frac r3, \qquad x_i \in K. $$
\end{lemma}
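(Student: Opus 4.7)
The statement is a standard compactness fact about totally bounded metric spaces, and the key point is that the bound $M$ must depend only on $X$ and $r$, not on $K$. My plan is to exploit total boundedness of $X$ first, and then use points of $K$ to re-center the covering.

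First, I would fix a small radius, say $\rho := r/7$, and use the fact that $X$ (a closed ball of radius $R$ in $\R^d$) is compact, hence totally bounded: there exists an integer $N=N(X,r)$ and points $z_1,\dots,z_N \in X$ such that
\[
X \subseteq \bigcup_{j=1}^N B(z_j,\rho).
\]
An explicit bound like $N \le C_d (R/\rho)^d$ is available but unnecessary; all that matters is that $N$ does not depend on $K$.

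Next, I would discard the indices $j$ for which $B(z_j,\rho) \cap K = \emptyset$, and, for each remaining index, pick a point $x_j \in B(z_j,\rho) \cap K$. By the triangle inequality, for every $y \in B(z_j,\rho)$
\[
|y - x_j| \le |y - z_j| + |z_j - x_j| < 2\rho = \tfrac{2r}{7} < \tfrac{r}{3},
\]
so setting $r_j := 2\rho = 2r/7$ we obtain $B(z_j,\rho) \subseteq B(x_j,r_j)$ with $r_j < r/3$ and $x_j \in K$. Relabelling the retained indices $i=1,\dots,M$ (with $M \le N$) yields a covering $\{B(x_i,r_i)\}_{i=1}^{M}$ of $K$ with the required properties.

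There is no real obstacle here; the only thing to be careful about is to choose the auxiliary radius $\rho$ strictly smaller than $r/6$ so that $2\rho < r/3$ (strict inequality), which is why I use $\rho = r/7$ rather than $\rho = r/6$. The dependence $M = M(X,r)$ is automatic since the initial covering of $X$ by balls of radius $\rho$ depends only on $X$ and $r$.
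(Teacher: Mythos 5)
Your proof is correct, and it takes a genuinely different route from the paper's. The paper covers $K$ directly with balls $B(x,r/4)$ centered at points $x\in K$, then applies Vitali's covering lemma to extract a finite subfamily such that the concentric balls of radius $r/20$ are pairwise disjoint while those of radius $r/4$ still cover $K$; the bound on $M$ then comes from comparing the total volume of the disjoint small balls with the volume of an $r$-neighborhood of $X$. You instead start from total boundedness of the ambient ball $X$, producing a $K$-independent finite $\rho$-net of $X$ with $\rho=r/7$, discard the balls missing $K$, re-center the survivors at points of $K$, and inflate the radius to $2\rho$ to recover the covering property. Both arguments ultimately rest on a $d$-dimensional volume bound, but yours makes the $K$-independence of $M$ transparent from the outset (since the net is chosen before $K$ enters the picture), whereas the paper's Vitali argument builds the covering adapted to $K$ and only afterwards shows the count is uniform. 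Your approach is slightly more elementary in that it avoids invoking Vitali's covering theorem; either works equally well for the purposes of the paper.
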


\begin{proof} 
We cover $K$ with balls $B(x,r/4)$, $x\in K$ and, by Vitali's covering theorem, we can extract a finite sub-covering, indexed by $\{1,..., M\}$ such that the balls $\{B(x_j,r/20)\}_{j=1,...,M}$ are disjoint and the balls $\{B(x_j,r/4)\}_{j=1,...,M}$ cover $K$.        
By the disjointness of $\{B(x_j,r/20)\}_{j=1,...,M}$ and since these balls are all contained in $U_r(X):=\{y\in\R^d:{\rm{dist}}(y,X)<r\}$, it follows that
$$M |B(0,r/20)| \leq |U_r(X)|,$$
which completes the proof of the lemma.
\end{proof}

\subsection{A semi-continuity and a quasi-additivity result}
In this subsection we collect two results which allow us to get rid of the simplifying assumption \eqref{eqn:ass-nothing-out} in the sketch of Section \ref{s:ideas}. Lemma \ref{high_multiplicity} improves Theorem \ref{lsc}(1), allowing us to consider in the right hand side of the inequality \eqref{e:lsc_res} only the portion of the currents $T_n$ which have sufficiently high multiplicity. Lemma \ref{lemma:quasiadditive} states that the $\alpha$-mass is ``quasi-additive'' if the two addenda have multiplicities of different orders of magnitude.

\begin{lemma}\label{high_multiplicity}
	Let $C>0$, $A \subseteq \R^d$ an open set, and let $T'= T[ E', \tau', \theta' ] \in \mathbf{R}_{1}(\R^d)$ and $T:= T[E,\tau, \theta] \in \mathbf{R}_{1}(\R^d)$  be rectifiable $1$-currents with 
	\begin{equation}
	\label{hp:mass-currents-bounded}
	\mathbb M^\alpha(T'), \MM(T) \leq C.
	\end{equation}
	Then, 
	for every $\e>0$ there exists $\delta:= \delta(d, \alpha, \e, C, A, T)>0$ (independent of $T'$) such that, if $\Flat (T- T') \leq \delta$,
	\begin{equation}\label{e:high_mult}
	\mathbb M^\alpha(T'\trace\{x\in A: \theta'(x)>\delta\})\geq \mathbb M^\alpha(T \trace A)-\e.
	\end{equation}
	
\end{lemma}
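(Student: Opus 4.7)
The plan is to argue by contradiction, reducing the statement to the lower semi-continuity result already proved in Theorem \ref{lsc}(1). Suppose the conclusion fails for some $\e>0$: then one can find sequences $\delta_n \downarrow 0$ and rectifiable currents $T'_n = T[E'_n,\tau'_n,\theta'_n]$ with $\Flat(T-T'_n)\leq \delta_n$ and $\MM(T'_n)\leq C$, yet
\begin{equation*}
\MM\bigl(T'_n \trace \{x\in A: \theta'_n(x)>\delta_n\}\bigr) < \MM(T\trace A) - \e.
\end{equation*}
The goal is to contradict the semi-continuity of the $\alpha$-mass along a suitable subsequence extracted from the $T'_n$.

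Write $T'_n = L_n + S_n$, where $S_n := T'_n \trace \{\theta'_n\leq\delta_n\}$ is the low-multiplicity part and $L_n := T'_n \trace \{\theta'_n > \delta_n\}$ the high-multiplicity part. The key elementary estimate uses that $\alpha<1$: whenever $\theta'_n(x)\leq\delta_n$ one has $\theta'_n(x) = \theta'_n(x)^{1-\alpha}\theta'_n(x)^{\alpha}\leq \delta_n^{1-\alpha}\theta'_n(x)^{\alpha}$, and integrating gives
\begin{equation*}
\Mass(S_n) \leq \delta_n^{1-\alpha}\MM(S_n) \leq \delta_n^{1-\alpha}\MM(T'_n) \leq C\,\delta_n^{1-\alpha} \xrightarrow[n\to\infty]{} 0.
\end{equation*}
In particular $\Flat(S_n)\leq \Mass(S_n)\to 0$, so by the triangle inequality for the flat norm $\Flat(L_n - T) \leq \Flat(T'_n - T)+\Flat(S_n) \to 0$.

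Now Theorem \ref{lsc}(1), applied to the rectifiable currents $L_n$ converging to the rectifiable current $T$ in flat norm on the open set $A$, yields
\begin{equation*}
\MM(T\trace A) \leq \liminf_{n\to\infty} \MM(L_n\trace A).
\end{equation*}
Since $L_n\trace A = T'_n\trace\{x\in A: \theta'_n(x)>\delta_n\}$, the right-hand side is bounded by $\MM(T\trace A)-\e$, contradicting the chain of inequalities (and if $\MM(T\trace A)=+\infty$, the same scheme produces a divergent lower bound for the $\MM(L_n\trace A)$, again contradicting the assumed upper bound).

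The only delicate point is the elementary upgrade from the $\alpha$-mass bound to a vanishing mass bound on $S_n$, which is what makes the high-multiplicity truncation $L_n$ still flat-converge to $T$; once this is observed, the semi-continuity statement of Theorem \ref{lsc}(1) applies directly and finishes the argument.
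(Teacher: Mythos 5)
Your proof is correct and follows essentially the same strategy as the paper: both split $T'$ into high- and low-multiplicity parts, use the estimate $\Mass(T'\trace\{\theta'\leq\delta\})\leq\delta^{1-\alpha}\MM(T')\leq C\delta^{1-\alpha}$ to show that the high-multiplicity part flat-converges to $T$, and then apply Theorem \ref{lsc}(1). The only cosmetic difference is that you run a contradiction argument where the paper phrases it directly (extracting a uniform $\delta_0$ from Theorem \ref{lsc}(1) and then choosing $\delta$ so small that $\delta+C\delta^{1-\alpha}\leq\delta_0$), but the underlying mechanism is identical.
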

\begin{proof}
	For every $\delta>0$, by \eqref{hp:mass-currents-bounded} it holds
	\begin{equation}
	\label{eqn:mass-to-0}
	\mathbb M(T'\trace\{\theta'\leq \delta \})<\delta^{1-\alpha}\mathbb M^\alpha(T'\trace\{\theta'\leq \delta\})<\delta^{1-\alpha}C.
	\end{equation}
	Hence, 
	\begin{equation}
	\begin{split}
	\mathbb{F}(T-T'\trace\{\theta'>\delta\})
	&\leq\mathbb{F}(T-T')+\mathbb{F}(T'- T'\trace\{\theta'>\delta \}) 
	\\&=\mathbb{F}(T-T')+\mathbb{F}(T'\trace\{\theta'\leq \delta \}) 
	\\ & \leq \mathbb{F}(T-T')+ \mathbb M(T'\trace\{\theta'\leq\delta \})
	\\&\leq \mathbb{F}(T-T')+ C \delta^{1-\alpha} \leq \delta+ C \delta^{1-\alpha} .
	\end{split}
	\end{equation}
	By the lower semi-continuity of the $\alpha$-mass with respect to the flat convergence (as stated in Theorem~\ref{lsc}(1)), there exists $\delta_0:= \delta_0(d,\alpha,\e, A,T)$ such that for any rectifiable $1$-current $\tilde T$ satisfying $\mathbb F(\tilde T - T) \leq \delta_0$ we have $\mathbb M^\alpha(\tilde T \trace A ) \geq \mathbb M^\alpha(T \trace A) - \e$. We conclude the proof choosing $\delta$ sufficiently small so that $\delta+C \delta^{1-\alpha} \leq \delta_0$.
\end{proof}

\begin{lemma}\label{lemma:quasiadditive}
Let $\varepsilon \in (0,1/4)$, $T_1=T[E_1,\tau_1,\theta_1], T_2=T[E_2,\tau_2,\theta_2]\in \mathbf{R}_{1}(\R^d)$ be rectifiable $1$-currents with $\theta_1<\e \theta_2$, $\Haus^1$-a.e. on $E_1 \cap E_2$. Then
\begin{equation}\label{e:lemma_quasiadd}
(1+4 \e^{\alpha})\Mass^\alpha(T_1+T_2)\geq \Mass^\alpha(T_1)+\Mass^\alpha(T_2).
\end{equation}
\end{lemma}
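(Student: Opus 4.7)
The plan is to decompose the supports into the three disjoint pieces $E_1 \setminus E_2$, $E_2 \setminus E_1$, and $E_1 \cap E_2$, and show the inequality pointwise (or rather $\Haus^1$-a.e.) on each piece. On the ``disjoint'' pieces $E_1 \setminus E_2$ and $E_2 \setminus E_1$ the multiplicity of $T_1+T_2$ coincides with $\theta_1$ (resp.\ $\theta_2$), so the contributions on these pieces are identical on both sides of \eqref{e:lemma_quasiadd} even without the factor $(1+4\e^\alpha)$. The work is entirely on $E_1 \cap E_2$, where the multiplicity $\mu$ of $T_1+T_2$ equals either $\theta_1+\theta_2$ (if $\tau_1=\tau_2$) or $\theta_2-\theta_1$ (if $\tau_1=-\tau_2$; recall $\theta_1 < \e\theta_2 < \theta_2$, so no cancellation can produce a negative density).

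The key pointwise claim on $E_1 \cap E_2$ is that $\theta_1^\alpha + \theta_2^\alpha \leq (1+4\e^\alpha)\mu^\alpha$. The easy case is $\mu = \theta_1+\theta_2$: then $\theta_2 \leq \mu$ gives $\theta_2^\alpha \leq \mu^\alpha$, and $\theta_1 < \e\theta_2 \leq \e\mu$ gives $\theta_1^\alpha \leq \e^\alpha \mu^\alpha$, so even $(1+\e^\alpha)\mu^\alpha$ suffices. The case $\mu = \theta_2-\theta_1$ is the one that dictates the constant $4\e^\alpha$: from $\theta_2 = \mu + \theta_1$ and the subadditivity $(a+b)^\alpha \leq a^\alpha + b^\alpha$ one gets $\theta_2^\alpha \leq \mu^\alpha + \theta_1^\alpha$; moreover $\theta_1 < \e\theta_2 = \e(\mu+\theta_1)$, hence $\theta_1 \leq \e\mu/(1-\e) \leq 2\e\mu$ since $\e<1/4$, and therefore $\theta_1^\alpha \leq 2^\alpha \e^\alpha \mu^\alpha \leq 2\e^\alpha \mu^\alpha$. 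Combining,
\[
\theta_1^\alpha + \theta_2^\alpha \leq 2\theta_1^\alpha + \mu^\alpha \leq (1 + 4\e^\alpha)\mu^\alpha.
\]

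Integrating the pointwise inequality over $E_1 \cap E_2$ and adding the trivial equalities on $E_1 \setminus E_2$ and $E_2 \setminus E_1$ (absorbed into the factor $(1+4\e^\alpha)\geq 1$) yields \eqref{e:lemma_quasiadd}. There is no serious obstacle here; the only point requiring a moment of care is the sign analysis that fixes the two possible values of $\mu$ on the overlap, and checking that the bound $\e<1/4$ is exactly what makes $\theta_1/(1-\e) \leq 2\e$ usable to pass from $\theta_2$ to $\mu$ in the opposite-orientation case.
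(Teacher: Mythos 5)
Your proof is correct and follows essentially the same route as the paper: decompose over $E_1\setminus E_2$, $E_2\setminus E_1$ and $E_1\cap E_2$, and reduce the matter to the pointwise inequality $\theta_1^\alpha+\theta_2^\alpha\leq(1+4\e^\alpha)\mu^\alpha$ on the overlap. The only cosmetic difference is that you split into the two orientation cases $\mu=\theta_1+\theta_2$ and $\mu=\theta_2-\theta_1$, whereas the paper avoids the case split by noting that $\mu\geq\theta_2-\theta_1$ always, and then bounding both $\theta_1$ and $\theta_2$ by multiples of $\theta_2-\theta_1$ via the two inequalities $2\e(\theta_2-\theta_1)\geq\theta_1$ and $(1+2\e)(\theta_2-\theta_1)\geq\theta_2$.
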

\begin{proof} Firstly we observe that on $E_1\cap E_2$ we have
\begin{equation}\label{spezza}
2\e(\theta_2-\theta_1)\geq\theta_1;\quad (1+2\e)(\theta_2-\theta_1)\geq \theta_2.
\end{equation}
Now we compute
\begin{equation*}
\begin{split}
(1+&4 \e^{\alpha})\Mass^\alpha(T_1+T_2)=(1+4 \e^{\alpha})\Mass^\alpha(T_1\trace(E_1\setminus E_2))\\
&\quad +(1+4 \e^{\alpha})\Mass^\alpha(T_2\trace (E_2\setminus E_1)) +(1+4 \e^{\alpha})\Mass^\alpha((T_1+T_2)\trace(E_1\cap E_2))\\
&\geq \Mass^\alpha(T_1\trace(E_1\setminus E_2))+\Mass^\alpha(T_2\trace (E_2\setminus E_1))\\
&\quad  + \big((2\e)^\alpha + (1+2\e)^\alpha \big)\Mass^\alpha((T_1+T_2)\trace(E_1\cap E_2))
.
\end{split}
\end{equation*}
We estimate the last term thanks to \eqref{spezza} to get
\begin{equation*}
\begin{split}
 \big(&(2\e)^\alpha +(1+2\e)^\alpha \big)\Mass^\alpha((T_1+T_2)\trace(E_1\cap E_2))\\
&\qquad {\geq} \Mass^\alpha(T_1\trace(E_1\cap E_2)) + \Mass^\alpha(T_2\trace(E_1\cap E_2 ))
.
\end{split}
\end{equation*}Putting together the previous two inequalities, we get \eqref{e:lemma_quasiadd}.
\end{proof}


\subsection{Absolute continuity of the transportation cost}
The next proposition is the fundamental tool to get rid of the simplifying assumption \eqref{eqn:equal-mass} in the sketch of Section \ref{s:ideas}. It ensures that if there exists a traffic path of finite cost transporting a measure $\mu^-$ onto a measure $\mu^+$, then a transportation between two ``small'' sub-measures of $\mu^-$ and $\mu^+$ of equal mass is cheap.

\begin{proposition}\label{l:small-transport}
Let $\mu^-,\mu^+ \in \M(X)$, be non-trivial measures with $\mu^-(X)=\mu^+(X)<\infty$. Assume 
$$\supp(\mu^-)\cap \supp(\mu^+)=\emptyset,$$
with $\MM ( \mu^-,\mu^+)<\infty$.
Then for every $\e>0$ there exists $\delta>0$ such that for every pair of measures $\nu^-\leq \mu^-$ and $\nu^+\leq \mu^+$ verifying
$$\nu^-(X)=\nu^+(X)\leq \delta,$$
then $\MM( \nu^-,\nu^+)\leq \e$.
\end{proposition}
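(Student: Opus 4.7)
The plan is to build a cheap traffic path from $\nu^-$ to $\nu^+$ by weighting the good decomposition of an optimal transport between $\mu^-$ and $\mu^+$ with the Radon--Nikod\'ym densities of $\nu^\pm$. By Corollary~\ref{existence}, there exists $T\in\OTP(\mu^-,\mu^+)$; since $T$ is optimal with finite $\alpha$-mass, Theorem~\ref{ottimo_acicl} gives that $T$ is acyclic and Theorem~\ref{s-decompcurr} supplies a good decomposition $\pi\in\M(\Lip)$, with $\pi$-a.e.\ curve $\gamma$ satisfying $\gamma(0)\in\supp(\mu^-)$ and $\gamma(\infty)\in\supp(\mu^+)$ by Proposition~\ref{p:propr_good_dec}(1) and the disjointness hypothesis. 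Applying Proposition~\ref{p:propr_good_dec}(4) to $T$, I select $\delta_0>0$ such that any sub-measure $\pi'\leq\pi$ with $\pi'(\Lip)\leq 2\delta_0$ yields a current of $\alpha$-mass at most $\e/2$, and I set $\delta:=\delta_0$.

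Given $\nu^\pm\leq\mu^\pm$ of common mass $m\leq\delta$, let $f^\pm:=d\nu^\pm/d\mu^\pm\in[0,1]$. I define the sub-measures of $\pi$
$$d\pi^-(\gamma):=f^-(\gamma(0))\,d\pi(\gamma),\qquad d\pi^+(\gamma):=f^+(\gamma(\infty))\,d\pi(\gamma),$$
each of total mass $m$. The associated rectifiable currents $T^\pm:=\int R_\gamma\,d\pi^\pm$ satisfy $\MM(T^-+T^+)\leq \e/2$ by Proposition~\ref{p:propr_good_dec}(4), and a direct computation using Proposition~\ref{p:propr_good_dec}(1) gives
$$\partial(T^-+T^+) \;=\; (\nu^+-\nu^-) + (\tilde\nu^+-\tilde\nu^-),$$
where $\tilde\nu^+\leq\mu^+$ is the second marginal of $\pi^-$ and $\tilde\nu^-\leq\mu^-$ is the first marginal of $\pi^+$, both of mass $m$. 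Adding any current $U$ with $\partial U=\tilde\nu^--\tilde\nu^+$ to $T^-+T^+$ yields a traffic path from $\nu^-$ to $\nu^+$, so the argument is reduced to producing such a $U$ of $\alpha$-mass at most $\e/2$.

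The strategy for constructing $U$ is to iterate the above scheme: the pair $(\tilde\nu^+,\tilde\nu^-)$ is an instance of the same problem with the roles of $\mu^+$ and $\mu^-$ exchanged, so at the odd-numbered steps one works with the good decomposition of $-T$ (which exists since $-T$ is optimal for the problem with swapped boundary), and at the even steps with that of $T$; allocating geometrically decreasing budgets $\e_k:=\e/2^{k+1}$ at step $k$ and applying Proposition~\ref{p:propr_good_dec}(4) at each step bounds the total $\alpha$-mass of the partial corrections by $\e/2$. By compactness of normal currents in the flat topology and the lower semi-continuity of the $\alpha$-mass (Theorem~\ref{lsc}), a weak limit $U$ is extracted. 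The main obstacle is that the mass of the defect measures produced at each iteration remains constantly equal to $m$, so there is no automatic contraction; verifying that $\partial U=\tilde\nu^--\tilde\nu^+$ in the limit requires showing that the defect measures converge weakly to zero, which must be achieved by exploiting the acyclicity of $T$ (giving $\supp(T)$ its tree-like structure) and the integrability $\int\mathrm{length}(\gamma)\,d\pi = \Mass(T)<\infty$ to spread the defect mass over vanishingly small portions of $\supp(\mu^\pm)$ as the iteration progresses.
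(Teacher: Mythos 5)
Your first half matches the paper's: you take an optimal $T$ with good decomposition $\pi$, reweight by the Radon--Nikod\'ym densities $f^\pm$, and obtain sub-currents $T^\pm$ with small $\alpha$-mass via Proposition~\ref{p:propr_good_dec}(4). You also correctly compute $\partial(T^-+T^+)=(\nu^+-\nu^-)+(\tilde\nu^+-\tilde\nu^-)$, reducing the problem to producing a cheap $U$ with $\partial U=\tilde\nu^--\tilde\nu^+$. The divergence is in how you try to build $U$, and that is where the argument breaks down irreparably.

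The iterative scheme cannot work as described, and you yourself flag the obstruction without resolving it. At step $k$ the defect measures $\tilde\nu^\pm_k$ always have total mass exactly $m$ (each is a marginal of a reweighted sub-measure of $\pi$ with $\pi$-mass $m$, and they live on $\supp(\mu^+)$ and $\supp(\mu^-)$ respectively, which are disjoint, so there is no cancellation). This is fatal in two ways. First, to get the $\alpha$-mass at step $k$ below the budget $\e_k=\e/2^{k+1}$ via Proposition~\ref{p:propr_good_dec}(4) you would need the sub-measure $\pi'$ used at step $k$ to have $\pi'(\Lip)\leq\delta(T,\e_k)$, a threshold that shrinks to zero with $k$; but the available defect mass stays pinned at $m$, so the lemma simply does not apply with the required budgets. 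Second, even ignoring the energy issue, the partial sums $U_K=\sum_{k\leq K}(T^-_k+T^+_k)$ satisfy $\partial U_K=(\nu^+-\nu^-)+(\tilde\nu^+_K-\tilde\nu^-_K)$, and the error term has total variation $2m>0$ for all $K$. A weak limit along a subsequence would carry this persistent boundary defect; appealing to acyclicity of $T$ or to $\int\mathrm{length}(\gamma)\,d\pi<\infty$ gives no mechanism to send $\tilde\nu^\pm_K$ to zero. In short, the iteration has no contraction, and the closing paragraph of your proposal is a hope, not an argument.

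The paper avoids iteration entirely and instead performs a finite, explicit surgery. After building $T^\pm$ exactly as you do, it cuts the curves of $\pi^\pm$ at the boundary of a small neighbourhood of $\supp(\mu^\pm)$ (Proposition~\ref{p:propr_good_dec_2}), producing $T^{cut,\pm}$ whose ``free'' boundary parts live on finitely many spheres $\partial B_i^\pm$ obtained from Lemma~\ref{lemma:covering}. The mass sitting on each sphere is then transported to a single point using Lemma~\ref{irrigationspher} with cost $\leq C_{\alpha,d}\,\delta^\alpha r$; this is precisely where the hypothesis $\alpha>1-\frac{1}{d-1}$ enters, since the transport is confined to a $(d-1)$-sphere. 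Finally the two resulting finite atomic measures of total mass $\leq\delta$ are joined through a single hub, with $\alpha$-mass $O(\delta^\alpha)$. Adding up gives a traffic path in $\TP(\nu^-,\nu^+)$ of $\alpha$-mass $\leq C(d,\alpha,X,r)\delta^\alpha+\frac{\e}{2}\leq\e$ for $\delta$ small. This is a one-shot construction whose total cost is controlled by $\delta^\alpha$, whereas your scheme has no decaying quantity to control. To salvage your route you would need an independent mechanism forcing the defect marginals to vanish, and no such mechanism is available from Proposition~\ref{p:propr_good_dec} alone.
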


\begin{proof}
Without loss of generality, we may assume $\mu^-,\mu^+\in\mathscr{P}(X)$. By assumption, there exists $T\in \OTP(\mu^-,\mu^+)$, such that $\MM (T)<+\infty$.

Let $T = \int _{\Lip} R_\gamma \, d\pi (\gamma)$ be a good decomposition of $T$ and define the finite measure $\pi^\pm \in \M(\Lip)$, prescribing their Radon--Nikod\'{y}m densities w.r.t. $\pi$, as 
$$d\pi^-(\gamma):= \frac{d\nu^-}{d \mu^-} (\gamma(0))d\pi (\gamma), \qquad d\pi^+(\gamma):= \frac{d\nu^+}{d \mu^+} (\gamma(\infty))d\pi (\gamma).$$
We denote
\begin{equation}\label{gooddec}
T^\pm=T[E^\pm,\tau^\pm,\theta^\pm]:=  \int_{\Lip} R_\gamma d\pi^\pm(\gamma).
\end{equation}
Let us consider $\delta>0$ as fixed. For the moment we only require that $\delta<\delta_0:= \delta(\e/4)$ of Proposition~\ref{p:propr_good_dec}(4). Further restrictions will be given later.
Since $\pi^\pm(\Lip) = \nu^\pm(X)\leq \delta$, from Proposition~\ref{p:propr_good_dec} (3) and (4)  we deduce that the decompositions in \eqref{gooddec} are good and that
\begin{equation}\label{smallmass}
\MM (T^\pm) \leq \frac{\e}{4}.
\end{equation}
By \eqref{buona-dec-boundary} we can write the boundaries of $T^\pm$ in terms of the decomposition as
\begin{equation}
\label{eqn:bd-t-pm}
\partial_- T^\pm = \int_{\Lip}\delta_{\gamma(0)} d \pi^\pm (\gamma)
\qquad \mbox{and} \qquad
\partial_+ T^\pm = \int_{\Lip}\delta_{\gamma(\infty)} d \pi^\pm (\gamma).
\end{equation}
We apply Lemma~\ref{lemma:covering} twice to $K:=\supp (\mu ^\pm)$ and $r:=\frac 13$dist$(\supp (\mu ^-),\supp (\mu ^+))$ to find a finite covering of $\supp (\mu^\pm)$ made by at most $M(X,r)$ open balls
$$
B^\pm_i:=B(x_i^\pm,r_i^\pm) \qquad i=1,..., M^\pm.
$$
For every $i=1,..., M^\pm$ let us define 
$$C^\pm:=\bigcup_{i}B^\pm_i.$$
By the choice of $r$, the sets $C^+$ and $C^-$ are disjoint. Hence, since $\supp(\partial_\pm T) \subseteq C^\pm$ and since \eqref{eqn:bd-t-pm} is in force, then $\pi^\pm$-a.e. $\gamma \in \Lip$ verifies
\begin{equation}
\label{eqn:curve-fuori-dalle-palle}\gamma(0)\in C^- \qquad \mbox{and} \qquad \gamma(\infty)\in C^+.
\end{equation}
We define the rectifiable $1$-currents
\begin{equation}\label{gooddec1}
\begin{split}
 T^{cut,-}=T[E^{cut,-},\tau^{cut,-},\theta^{cut,-}]:=\int_{\Lip} R_{res(0,O_{C^-})(\gamma)} d\pi^-(\gamma),\\ T^{cut,+}=T[E^{cut,+},\tau^{cut,+},\theta^{cut,+}]:=\int_{\Lip} R_{res(E_{C^+},\infty)(\gamma)} d\pi^+(\gamma).
 \end{split}
 \end{equation}
By Proposition~\ref{p:propr_good_dec_2}, \eqref{gooddec1} are good decompositions. {Here we use a little abuse of notation, since the good decomposition of $T^{cut,-}$ would be the push-forward measure 
$$
\big(res(0,O_{C^-})(\cdot)\big)_\sharp \pi^-$$ and similarly for $T^{cut,+}$}. In particular, by point (1) of Proposition~\ref{p:propr_good_dec} it holds 
\begin{equation}
\label{eqn:boundary-t-meno-senza-mu}\partial_- T^{cut,-} =  \int_{\Lip} \delta_{\gamma(0)} d \pi^-(\gamma),
\qquad
\partial_+ T^{cut,-} =  \int_{\Lip} \delta_{\gamma(O_{C^-})} d \pi^-(\gamma)
\end{equation}
  Hence we deduce 
$$\supp(\partial_+ T^{cut,-})\subseteq \partial C^-  \qquad \mbox{and} \qquad \supp(\partial_- T^{cut,+})\subseteq \partial C^+.$$
By the good decomposition property of $T^{cut,-}$ and of $T^-$ and by Proposition~\ref{p:propr_good_dec_2} for $\Haus^1$-a.e. $x\in E^-\cap E^{cut,-}$ we have that
\begin{equation}
\begin{split}
\theta^{cut,-}(x) 
\leq \theta^{-}(x).
\end{split}
\end{equation}
Thanks to \eqref{smallmass}, we deduce that $T^{cut,\pm}$ have small energy
\begin{equation}
\label{eqn:small-energy-cut}
\MM (T^{cut,-}) = \int_{E^{cut,-}}(\theta^{cut,-})^{\alpha} d\Haus^1 \leq \int_{E^{-}}(\theta^-)^{\alpha} d\Haus^1 =  \MM (T^-)
\leq \frac{\varepsilon}{4}.
\end{equation}
With similar computations we can prove the same energy estimate for  $ T^{cut,+}$.

\bigskip

Let $\{y_1^-,..., y_{M^-}^-\}_{i=1,..., M^-} \subseteq \R^d $ and $\{y_1^+,..., y_{M^+}^+\}_{i=1,..., M^+} \subseteq \R^d $ be two sets of distinct points such that $y^\pm_i \in \partial B_i^\pm$ for every $i= 1,..., M^\pm$. For every $i=1,..., M^-$ we define the weight $w_i^\pm \in (0,\infty)$ as 
$$
w_i^- := (\partial_+ T^{cut,-}) \Big( \partial B_i^- \setminus \bigcup_{j=1}^{i-1}\partial B_j^- \Big)
$$
and
$$
w_i^+ := (\partial_- T^{cut,+}) \Big( \partial B_i^+ \setminus \bigcup_{j=1}^{i-1}\partial B_j^+ \Big).
$$
We consider the measures $\sigma^\pm:=\sum_{i=1}^{M^\pm} w^\pm_i \delta_{y^\pm_i} $, whose total mass is equal to $\nu^\pm(X)\leq \delta$. Indeed we proved in \eqref{eqn:boundary-t-meno-senza-mu}, that $\partial_- T^{cut,-}=\partial_- T_-$ and consequently
$$\sigma^-(X)=\partial_+T^{cut,-}(X)=\partial_- T^{cut,-}(X)=\partial_- T_-(X)=\nu^-(X)\leq\delta$$
and analogously 
$$\sigma^+(X)=\partial_-T^{cut,+}(X)=\partial_+ T^{cut,+}(X)=\partial_+ T(X)=\nu^+(X)\leq\delta.$$
We claim that there exists $T^{conn, - } \in \TP( \partial_+ T^{cut,-} , \sigma^- )$ with
$$\MM(T^{conn, - } ) \leq C(d, \alpha, X, r)\delta.$$
Similarly, we claim that  there exists $T^{conn,+ } \in \TP( \partial_- T^{cut,+} , \sigma^+ )$ with
$$\MM(T^{conn, + } ) \leq C(d, \alpha, X, r)\delta.$$
Indeed let us consider for every $i=1,..., M^-$ an optimal traffic path 
$$T^{conn, - }_{i} \in \OTP \big(  (\partial_+ T^{cut,-}) \trace\big( \partial B_i^- \setminus \cup_{j=1}^{i-1} \partial B_j^- \big), w_i \delta_{y_i^-}\big)$$
and observe that, by Lemma \ref{irrigationspher} 
$$\MM(T^{conn, - }_{i} ) \leq C(d, \alpha)\delta r.$$
If we consider now
$$T^{conn, - }:= \sum_{i=1}^{M^-} T^{conn, - }_{i},$$
we notice that $T^{conn, - }\in \TP(  \partial_+ T^{cut,-} ,\sigma^-) $ and by the sub-additivity of the $\alpha$-mass \eqref{eqn:mass-subadd} we obtain that 
$$\MM(T^{conn, - } ) \leq \sum_{i=1}^{M^-} \MM(T^{conn, - }_i ) \leq M^-(X,r) C(d, \alpha)\delta r \leq  C(d, \alpha, X,r)\delta$$
and this proves the claim.

\bigskip
Finally we observe that there exists $T^{graph} \in \TP \big(\sigma^- ,  \sigma^+\big) $ with 
$$
\MM(T^{graph} ) \leq \delta^\alpha C(d,X).
$$
The simplest way to find such traffic path is to connect all the points in the support of $\sigma^\pm$ to a fixed point in $X$. The estimate of its $\alpha$-mass is trivial.
Overall, we find that 
$$T^{new} := T^{cut,-} + T^{conn, - } +T^{graph} + T^{conn, +}+ T^{cut,+} \in \TP(\nu^-,\nu^+)$$
 and its energy is estimated using  the sub-additivity \eqref{eqn:mass-subadd} and the previous estimates (observing that $\delta\leq \delta^\alpha$ for $\delta\leq 1$)
$$\MM(T^{new}) \leq C(d, \alpha, X,r)\delta^\alpha + \frac{\e}2.$$
By choosing $\delta$ sufficiently small, we obtain that the last quantity is less than or equal to $\e$. This concludes the proof of the lemma.



\end{proof}

\begin{corollary}
Let $\mu^-,\mu^+ \in \mathscr P(X)$. Assume 
$$\supp(\mu^-)\cap \supp(\mu^+)=\emptyset,$$
with $\MM ( \mu^-,\mu^+)<\infty$. Then for every pair of sequences $(\mu^-_n)_{n\in \N}$ and $(\mu^+_n)_{n\in \N}$ with $\mu^-_n(\R^d) = \mu^+_n(\R^d)$, $\mu^-_n \leq \mu^-$, $\mu^+_n \leq \mu^+$ for every $n\in \N$ and with $$\lim_{n\to \infty} \mu^-(X)-\mu^-_n(X) = 0,$$ we have that
$$\lim_{n\to \infty} \Mass^\alpha(\mu^-_n, \mu^+_n) =\Mass^\alpha(\mu^-, \mu^+).$$
\end{corollary}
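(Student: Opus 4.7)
The plan is to sandwich $\MM(\mu_n^-, \mu_n^+)$ by $\MM(\mu^-, \mu^+)$ up to an error vanishing as $n\to\infty$, obtained by splicing near-optimal traffic paths for the original problem with a cheap transport of the residuals
\[
\nu_n^\pm := \mu^\pm - \mu_n^\pm.
\]
First I would check that the $\nu_n^\pm$ fall under the scope of Proposition~\ref{l:small-transport}: they are non-negative measures dominated by $\mu^\pm$, hence their supports are contained in $\supp(\mu^\pm)$, which are disjoint by hypothesis. Moreover, using $\mu^-(X)=\mu^+(X)=1$ and $\mu_n^-(X)=\mu_n^+(X)$, one has $\nu_n^-(X)=\nu_n^+(X)=1-\mu_n^-(X)$, and this common mass tends to $0$ by assumption. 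Proposition~\ref{l:small-transport} then yields $\MM(\nu_n^-,\nu_n^+)\to 0$.

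For the upper bound, I would fix $T^*\in\OTP(\mu^-,\mu^+)$ (which exists by Corollary~\ref{existence}, since $\MM(\mu^-,\mu^+)<\infty$) and, for each $n$, pick $T_n'\in\TP(\nu_n^-,\nu_n^+)$ with $\MM(T_n')\leq \MM(\nu_n^-,\nu_n^+)+\tfrac{1}{n}$. The decomposition $\mu^\pm=\mu_n^\pm+\nu_n^\pm$ gives
\[
\partial(T^*-T_n')=(\mu^+-\mu^-)-(\nu_n^+-\nu_n^-)=\mu_n^+-\mu_n^-,
\]
so $T^*-T_n'\in\TP(\mu_n^-,\mu_n^+)$; subadditivity~\eqref{eqn:mass-subadd} then yields
\[
\MM(\mu_n^-,\mu_n^+)\leq \MM(T^*)+\MM(T_n')\leq \MM(\mu^-,\mu^+)+\MM(\nu_n^-,\nu_n^+)+\tfrac{1}{n},
\]
whence $\limsup_n \MM(\mu_n^-,\mu_n^+)\leq \MM(\mu^-,\mu^+)$. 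For the lower bound I would argue symmetrically: choose $S_n\in\TP(\mu_n^-,\mu_n^+)$ with $\MM(S_n)\leq \MM(\mu_n^-,\mu_n^+)+\tfrac{1}{n}$ (the inequality we want is trivial if $\MM(\mu_n^-,\mu_n^+)=+\infty$, so we may assume finiteness) together with a cheap $\tilde T_n\in\TP(\nu_n^-,\nu_n^+)$. Then $S_n+\tilde T_n\in\TP(\mu^-,\mu^+)$ by the same boundary computation, and subadditivity gives
\[
\MM(\mu^-,\mu^+)\leq \MM(\mu_n^-,\mu_n^+)+\MM(\nu_n^-,\nu_n^+)+\tfrac{2}{n},
\]
so $\MM(\mu^-,\mu^+)\leq \liminf_n\MM(\mu_n^-,\mu_n^+)$. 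Combining the two inequalities yields the claim.

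I do not expect any serious obstacle in this argument: the substantive work is already packaged in Proposition~\ref{l:small-transport}, and what remains is a purely linear bookkeeping of boundaries via the splitting $\mu^\pm=\mu_n^\pm+\nu_n^\pm$ together with the elementary subadditivity~\eqref{eqn:mass-subadd}. The only minor points of care are the use of near-minimizers (via the infimum defining $\MM(\cdot,\cdot)$) rather than actual optimizers for the perturbed problems, and the trivial handling of the case $\MM(\mu_n^-,\mu_n^+)=+\infty$ in the lower bound.
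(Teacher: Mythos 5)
Your argument is correct, and it actually takes a cleaner route than the paper's for half of the claim. For the $\limsup$ inequality you and the paper do essentially the same thing: you both subtract from a near-optimal $T^*\in\TP(\mu^-,\mu^+)$ a cheap transport $T_n'$ of the residuals to get a competitor in $\TP(\mu_n^-,\mu_n^+)$, then invoke subadditivity. (A cosmetic difference: you cite Proposition~\ref{l:small-transport} for the smallness of $\MM(\nu_n^-,\nu_n^+)$, while the paper cites Proposition~\ref{p:propr_good_dec}(4), the key tool used inside the proof of Proposition~\ref{l:small-transport}. Your citation is in fact the more direct and accurate one for this step, since Proposition~\ref{p:propr_good_dec}(4) alone controls currents obtained by restricting a fixed good decomposition, whereas here one needs a uniform statement over arbitrary sub-measures of both marginals, which is exactly what Proposition~\ref{l:small-transport} packages.) For the $\liminf$ inequality the two proofs genuinely diverge. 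The paper invokes the lower semi-continuity result Theorem~\ref{lsc}(1) together with compactness of normal currents (Federer 4.2.17(1)) to pass to the limit along a subsequence of optimizers $T_{n_k}$, which requires first establishing finiteness of $\MM(\mu_n^-,\mu_n^+)$ and equi-bounds to invoke compactness. You instead construct a competitor for the limit problem directly, taking $S_n+\tilde T_n$ where $S_n$ is near-optimal for $(\mu_n^-,\mu_n^+)$ and $\tilde T_n$ is a cheap transport of the residuals, and apply subadditivity. This is more elementary — it avoids the machinery of Section~\ref{s:sci} and the compactness theorem entirely — and it handles the case $\MM(\mu_n^-,\mu_n^+)=+\infty$ trivially, as you note. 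Both approaches are sound; yours is shorter and more self-contained for this particular statement, at the cost of not illustrating the semi-continuity result that the paper wants to put to work.
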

\begin{proof}
By the lower semi-continuity of the $\alpha$-mass (Theorem \ref{lsc}(1)), we only need to show that
\begin{equation}
\label{eqn:upper-manca}
\limsup_{n\to \infty} \Mass^\alpha(\mu^-_n, \mu^+_n) \leq \Mass^\alpha(\mu^-, \mu^+).
\end{equation}
Indeed, if we assume \eqref{eqn:upper-manca}, by Corollary \ref{existence}, and by the compactness of normal currents (see \cite[4.2.17(1)]{FedererBOOK}) we can consider a sequence of optimizers $(T_{n_k})_{k\in\N}$, where $T_{n_k}\in\OTP(\mu^-_{n_k},\mu^+_{n_k})$ converge to a traffic path $T\in\TP(\mu^-,\mu^+)$ with finite cost and
$$\lim_{k\to \infty}\MM(T_{n_k})=\liminf_{n\to \infty}\MM(T_n).$$ 
Hence we compute
$$\Mass^\alpha(\mu^-, \mu^+)\leq\MM(T)\overset{\eqref{e:lsc_res}}{\leq}\liminf_{k\to \infty}\MM(T_{n_k})=\liminf_{n\to\infty}\MM(T_n).$$
In order to prove \eqref{eqn:upper-manca}, we let $T \in \OTP(\mu^-, \mu^+)$. Since by assumption the measures
$\mu^--\mu^-_n$ and $\mu^+- \mu^+_n$ are non-negative, are converging to $0$ and, for each fixed $n$, they have the same mass, we deduce by point (4) of Proposition~\ref{p:propr_good_dec} that, denoting by $T_n'$ any optimal path in $\OTP(\mu^--\mu^-_n, \mu^+-\mu^+_n)$, 
$$\lim_{n\to \infty}\Mass^\alpha(T_n') =0.$$ 
Let $T_n = T- T_n' \in \TP(\mu^-_n, \mu^+_n)$. By the sub-additivity of the $\alpha$-mass \eqref{eqn:mass-subadd}
$$\Mass^\alpha(T_n) \leq \Mass^\alpha(T) + \Mass^\alpha(T_n').$$ 
Letting $n\to \infty$ we obtain \eqref{eqn:upper-manca}.
\end{proof}
\begin{remark}
From this observation the stability follows as in the case $\alpha>1-1/d$ as soon as the approximating sequences are sub-measures of $\mu^-$ and $\mu^+$ respectively. In particular, if $\mu^-$ is a Dirac delta and $\mu^+$ is an atomic measure, then an optimal traffic path connecting $\mu^-$ to $\mu^+$ can be obtained as the limit of the optimal traffic paths connecting the correct ``rescaled'' measure of $\mu^-$ to the discrete measure obtained restricting $\mu^+$ to suitable sets of finitely many points.
\end{remark}


\section{Proof of Theorem~\ref{thm:main}}\label{s:proof}
Up to rescaling, we can assume that $\mu^-$ and $\mu^+$ are probability measures. Moreover, without loss of generality we can assume that $\mu^-_n$ and $\mu^+_n$ are also probability measures and they are mutually singular. Indeed, assuming the validity of Theorem \ref{thm:main} in this special case, it is easy to deduce its validity in general, using the following argument. Denoting $\nu^-_n$ and $\nu^+_n$ respectively the negative and the positive part of the measure $\mu^+_n-\mu^-_n$, since the supports of $\mu^-$ and $\mu^+$ are disjoint, we have
that $\nu^-_n\rightharpoonup \mu^-$ and $\nu^+_n\rightharpoonup \mu^+$. Moreover, since the ambient is a compact set, $\nu^\pm_n(X)\to\mu^\pm(X)=1$. Now, denoting $\eta_n:=\nu^-_n(X)=\nu^+_n(X)$, we are in the poisiton to apply Theorem \ref{thm:main} in the special case above for the approximating measures $\eta_n^{-1}\nu^\pm_n$, the limiting measures $\mu^\pm$, the optimal traffic paths $\eta_n^{-1}T_n$ and the limit traffic path $T$. Since $\eta_n\to 1$ and $T_n\rightharpoonup T$ is in force, then $\eta_n^{-1}T_n\rightharpoonup T$ is satisfied.

~
\newline
By contradiction, we assume $T$ is not optimal, i.e. 
\begin{equation}\label{e:dropmassa}
\MM(T)\geq \MM(T_{opt})+\Delta,
\end{equation}
for some $\Delta>0$ and for some $T_{opt}$ with $\partial_\pm T_{opt}=\partial_\pm T$.

\smallskip
{\it Step 1: construction of the coverings of $A^-$ and $A^+$}. Let $C_{\alpha,d}$ be the constant in Lemma~\ref{irrigationspher}.
We claim that there exists a (finite or countable) family of balls $\{B_i^\pm = B(x^\pm_i,r^\pm_i)\}_{i \in I^\pm}$ covering respectively $A^-\cap \supp(\mu^-)$ and $A^+\cap \supp(\mu^+)$, such that
\begin{equation}\label{eqn:cov-limit-1}
\overline{\Big( \bigcup_{i\in I^-} B^-_i \Big)} \cap \overline{\Big(\bigcup_{i\in I^+} B^+_i\Big)} = \emptyset,
\end{equation}
\begin{equation}\label{eqn:cov-limit-2}
\sum_{i\in I^\pm} r^\pm_i < \frac{\Delta}{128 C_{\alpha,d}},	
\end{equation}\begin{equation}\label{eqn:cov-limit-3}
\MM\Big(T\trace \bigcup_{i\in I^\pm} \overline{B_i^\pm}  \Big) \leq \frac{\Delta}{128}, \qquad \MM\Big(T_{opt} \trace \bigcup_{i\in I^\pm} \overline{B_i^\pm}  \Big) \leq \frac{\Delta}{128},
\end{equation}\begin{equation}\label{eqn:cov-limit-4}
\mu^\pm(\partial B^\pm_i) =0, \qquad \mu^\pm_n(\partial B^\pm_i) =0 \qquad \forall i\in I^\pm, \, n \in \N.
\end{equation}
For simplicity, we assume $I^\pm$ to be either $\N$ or a set of the form $\{1,..., M^\pm\}$.
Finally, up to removing certain balls, we can assume the two coverings to be not redundant, namely, we can assume that
\begin{equation}
\label{eqn:discard}
\mu^\pm \Big(B_{i}^\pm \setminus \bigcup_{1 \leq j < i} {B_j^\pm} \Big) \neq 0, \qquad \forall i\in I^\pm.
\end{equation}
Since we have removed only balls that do not carry measure, the new set of balls still covers $A^-\cap \supp(\mu^-)$ and $A^+\cap \supp(\mu^+)$ up to a set of $\mu^\pm$-measure $0$. 

We now prove the claim of this Step 1. Let $d_0$ be the distance between $\supp (\mu^-)$ and $\supp (\mu^+)$, which is positive since the supports $ \supp( \mu^-) $ and $ \supp( \mu^+) $ are compact and disjoint. Applying Lemma~\ref{lemma:covering-other} with $\e =\min\{ \Delta/(128 C_{\alpha,d}), \Delta/128, d_0/4\}$ and $T'=T_{opt}$, we can find two finite coverings satisfying 
 \eqref{eqn:cov-limit-1}, \eqref{eqn:cov-limit-2}, \eqref{eqn:cov-limit-3}, and \eqref{eqn:cov-limit-4}.

\smallskip
{\it Step 2: choice of $N^\pm$}. Let $\e_1 >0$ to be chosen later. We choose $N^\pm$ satisfying
$$\mu^\pm \Big( \bigcup_{j=1}^{N^\pm} {B_j^\pm}\Big) > 1-\frac {\e_1} 4.
$$
\smallskip
{\it Step 3: choice of $n$}. Let $\e_2>0$ to be chosen later. For every $i\in  I^\pm$ we define 
$$ C_{i}^\pm=B_i^\pm \setminus \Big(\cup_{j=1}^{i-1}B_j^\pm \Big).$$ 
By \eqref{eqn:discard} the coverings are not redundant, that is, for every $i\in I^\pm$, 
\begin{equation}\label{nonzero}
\mu^\pm(C_i^\pm)>0.
\end{equation}
We claim that we can fix $n$ large enough so that the following properties hold:
\begin{equation}\label{eqn:choice-n-2}
\Flat (T_n - T) \leq \e_2,
\end{equation}
\begin{equation}\label{eqn:choice-n-3}
 \mu^\pm_n(C_i^\pm)\leq (1+\e_2)\mu^\pm(C_i^\pm), \qquad \forall i=1,\cdots, N^\pm,
\end{equation}
\begin{equation}\label{eqn:choice-n-4}
\mu^\pm_n\Big(\R^d \setminus \bigcup_{i=1}^{N^\pm} C_i^\pm \Big)\leq \frac {\e_1}2.
\end{equation}
Indeed, since $T_n\in \OTP(\mu^-_n,\mu^+_n)$, by Theorem \ref{ottimo_acicl}, Theorem \ref{s-decompcurr}  and Proposition~\ref{p:propr_good_dec}(2), $T_n=T_n[E_n,\tau_n,\theta_n]$ admits a good decomposition $\pi_n\in \mathscr{P}(\Lip)$ and its multiplicity $\theta_n$ verifies $\theta_n \leq 1$.
Consequently we get
$$\Mass(T_n)= \int_{E_n} \theta_n(x) d\Haus^1(x)\leq \int_{E_n} \theta_n^\alpha(x) d\Haus^1(x) \leq \MM(T_n)\leq C.$$
Moreover
$$\Mass(\partial T_n)=\mu^-_n(\R^d)+\mu^+_n(\R^d)=2<+\infty.$$
By the discussion after the definition of flat norm \eqref{e:flat}, the uniform bounds on the mass of the currents $T_n$ and on the mass of their boundaries guarantees that the weak$^*$ convergence implies \eqref{eqn:choice-n-2}, for $n$ sufficiently large. By \eqref{eqn:cov-limit-4} and since $\mu^\pm_n=\partial_\pm T_n$ weakly converges to $\mu^\pm= \partial_\pm T$, we observe that
$$\mu^\pm( \partial C_i) = 0 , \qquad \forall i=1,\cdots, N^\pm$$
and therefore
$$\lim_{n\to \infty}  \mu^\pm_n(C_i^\pm) =  \mu^\pm(C_i^\pm), \qquad \forall i=1,\cdots, N^\pm.$$
Since the right-hand side in the previous equality is non-zero thanks to \eqref{nonzero}, we obtain \eqref{eqn:choice-n-3} for $n$ large enough.

We fix $n$ large enough to satisfy the conditions in this step. Up to the end of the proof, we will always refer to this choice of $n$.
\smallskip

{\it Step 4: good decomposition of $T_n$ and selection.}
Let us define
\begin{equation}
\label{def:sel}
\pi_n^{sel}:= \pi_n \trace\Big\{\gamma: \gamma(0) \in \bigcup_{i=1}^{N^-} C_i^- \mbox{ and } \gamma(\infty) \in \bigcup_{i=1}^{N^+} C_i^+
 \Big\}.
 \end{equation}
Let us consider $T_n^{sel}$ to be the 1-dimensional current obtained from $T_n$ selecting only those curves that begin  inside the first $N^-$ balls and end inside the first $N^+$ balls, i.e.
$$T_n^{sel} := \int _{\Lip} R_\gamma \, d\pi_n^{sel} (\gamma).$$
Notice that, by Proposition~\ref{p:propr_good_dec}(3), $\pi_n^{sel}$ is a good decomposition of $T_n^{sel}$; in particular by Proposition~\ref{p:propr_good_dec}(1)
$$\partial_- T^{sel}_n =  \int _{\Lip}\delta_{\gamma(0)} \, d\pi_n^{sel} (\gamma)$$
is supported on $\bigcup_{i=1}^{N^-} C_i^-$ and it satisfies $\partial_- T^{sel}_n\leq\partial_- T_n=\mu_n^-$. 
 
For the same reason, $\pi_n - \pi_n^{sel}$ is a good decomposition of $T_n-T_n^{sel}$ and, denoting by $\tilde \theta_n$ the multiplicity of $T_n-T_n^{sel}$, we have the bound  
\begin{equation}
\label{eqn:mult}
\tilde \theta_n \leq \min\{\theta_n,(\pi_n-\pi_n^{sel})(\Lip) \}.
\end{equation}
Next we estimate
\begin{equation}
\label{eqn:natale}
\begin{split}
(\pi_n-\pi_n^{sel}&)(\Lip)= \pi_n \Big(\Big\{\gamma: \gamma(0) \not\in \bigcup_{i=1}^{N^-} C_i^- \mbox{ or } \gamma(\infty) \not\in \bigcup_{i=1}^{N^+} C_i^+
\Big\}\Big)
\\&
\leq\pi_n \Big(\Big\{\gamma: \gamma(0) \not\in \bigcup_{i=1}^{N^-} C_i^- \Big\}\Big)+ \pi_n \Big(\Big\{\gamma:  \gamma(\infty) \not\in \bigcup_{i=1}^{N^+} C_i^+
\Big\}\Big).
\end{split}
\end{equation}
By the good decomposition of $T_n$ (and in particular by \eqref{buona-dec-boundary}) for every Borel set $A \subseteq \R^d$
$$\pi_n\big(\{\gamma: \gamma(0)\in A\}\big) = \partial_-T_n(A)=\mu_n^-(A);
$$
hence, by \eqref{eqn:choice-n-4}
 $$
 \pi_n \Big(\Big\{\gamma: \gamma(0) \not\in \bigcup_{i=1}^{N^-} C_i^- \Big\} \Big) = \mu^-_n\Big( \Big(\bigcup_{i=1}^{N^-} C_i^-\Big)^c \Big)\leq \e_1/2.
 $$
A similar inequality holds for the second term in the right-hand side of \eqref{eqn:natale}. Overall,
it follows 
\begin{equation}\label{eqn:poca-massa-fuori-sel}
(\pi_n-\pi_n^{sel})(\Lip)\leq\varepsilon_1.
\end{equation}

%

We also notice that $T_n$ and $T_n^{sel}$ are close in flat norm by \eqref{eqn:mult} and \eqref{eqn:poca-massa-fuori-sel}
\begin{equation}
\label{eqn:flat-sel}
\begin{split}
\Flat(T_n-T_n^{sel}) &\leq \Mass (T_n-T_n^{sel}) = \int_{E_n} \tilde \theta_n d\Haus^1
\\&\leq \e_1^{1-\alpha}  \int_{E_n} \tilde \theta_n^\alpha d\Haus^1
\leq \e_1^{1-\alpha}  \int_{E_n}  \theta_n^\alpha d\Haus^1
\leq C\e_1^{1-\alpha}.
\end{split}
\end{equation}

\smallskip
{\it Step 5: restriction of $T_n$ inside the covering.}
We decompose $\pi_n^{sel}$ into the sum of finitely many, pairwise singular measures $\pi_{n,i}^{sel,-}$, according to the starting points of the associated curves, i.e. for every $i= 1,..., N^-$ we denote
\begin{equation}
\label{pezzo1}
\pi_{n,i}^{sel,-}:= \pi^{sel}_n \trace\Big\{\gamma: \gamma(0) \in C_i^-\Big\},
\end{equation}
and we notice that, using \eqref{def:sel},
\begin{equation}
\label{eqn:pi-n-i}
\sum_{i=1}^{N^-} \pi_{n,i}^{sel,-} = \pi_n^{sel}.
\end{equation}
We ``cut'' the current $T_n^{sel}$ considering the curves in its decomposition only up to the first time when they leave the ball where they begin, i.e. we define
\begin{equation}
\label{pezzo12}
T_{n,i}^{sel,-}:= \int _{\Lip} R_{res(0, O_{B_i^-})(\gamma)} \, d\pi^{sel,-}_{n, i} (\gamma), \qquad T_n^{sel,-}:= \sum_{i=1}^{N^{-}} T_{n,i}^{sel,-}.
\end{equation}
The measure
$$\sum_{i=1}^{N^{-}}(res(0, O_{B_i^-})(\cdot))_\sharp \pi^{sel,-}_{n, i}$$
is a good decomposition of $T_n^{sel,-}$: this is a consequence of Remark~\ref{remark:A-B-disj} applied to $I(\gamma):=\gamma(0)$, 
$$F(\gamma):=\begin{cases} O_{B_i^-}(\gamma), & \mbox{if }  \gamma(0) \in C_i^-, \mbox { for some $i=1,\cdots, N^-$}\\ 0, & \mbox{otherwise}\end{cases},$$ 
$E^-:= (\cup_{i=1}^{N^-}B^-_i)\setminus (\cup_{i=1}^{N^-}\partial B^-_i)$ and $E^+:=\cup_{i=1}^{N^-}\partial B^-_i $. Notice that the assumption of the Remark are satisfied in view of \eqref{eqn:cov-limit-4}.

Using this fact, by \eqref{buona-dec-boundary}, \eqref{eqn:pi-n-i} and \eqref{pezzo12}, we get
\begin{equation}
\label{obs1}
\partial_-T_n^{sel,-}= \partial_-T_n^{sel}.
\end{equation}
 Analogously we define 
\begin{equation}
\label{pezzo2}
\pi_{n,j}^{sel,+}:= \pi^{sel}_n \trace\Big\{\gamma:\gamma(\infty) \in C_j^+\Big\} \qquad\mbox{for every $j= 1,..., N^+$},
\end{equation}
and we ``cut'' the current $T_n^{sel}$ considering the curves in its decomposition only from the last time when they enter in the ball where they end, i.e. we define
\begin{equation}
\label{pezzo13}
T_{n,j}^{sel,+}:= \int _{\Lip} R_{res(E_{B_j^+}, \infty)(\gamma)} \, d\pi^{sel,+}_{n, j} (\gamma), \qquad T_n^{sel,+}= \sum_{j=1}^{N^{+}} T_{n,j}^{sel,+}. 
\end{equation}
Arguing as for \eqref{obs1}, we get
\begin{equation}
\label{obs2}
\partial_+T_n^{sel,+}= \partial_+T_n^{sel},
\end{equation}
and combining \eqref{obs1} and \eqref{obs2}, we derive
\begin{equation}\label{bordi}
\partial T_n^{sel,-}+\partial T_n^{sel,+}=\partial T_n^{sel}+ \partial_+ T_{n}^{sel,-}- \partial_- T_{n}^{sel,+}.
\end{equation}

\smallskip
{\it Step 6: good decomposition of $T_{opt}$ and restriction outside the covering.}
Let $\pi_{opt}$ be a good decomposition of $T_{opt}$.
Let us decompose $\pi_{opt}$ into the sum of countably many, mutually singular measures $\pi_{opt, i, j }$, according to the starting and the ending points of the associated curves, i.e., for every $i \in I^-$ and $j\in I^+$ we denote
$$\pi_{opt, i, j }:= \pi \trace \Big\{\gamma:  \gamma(0) \in C_i^- \mbox{ and } \gamma(\infty) \in C_j^+
\Big\}. $$
We denote by $T_{opt,i,j}$ the traffic path associated to $\pi_{opt,i,j}$. Now we ``cut'' the current $T_{opt}$ considering the curves in its decomposition only from the first time when they leave the ball where they begin, up to the last time when they enter in the ball where they end, i.e. we define (see \S \ref{s:restriction})
$$T_{opt,i,j}^{restr}:=\int _{\Lip} R_{res(O_{B_i^-}, E_{B_j^+})(\gamma)} \, d\pi_{opt, i, j} (\gamma),\qquad T_{opt}^{restr} := \sum_{i \in I^-, j\in I^+}T_{opt,i,j}^{restr}.$$
Notice that, by Remark~\ref{remark:A-B-disj} and \eqref{eqn:cov-limit-1}, this formula gives a good decomposition of $T_{opt}^{restr}$. Here we use the same abuse of notation, as in \eqref{gooddec1}.
By Proposition~\ref{p:propr_good_dec_2}, we have that the multiplicity of $T_{opt}^{restr}$ is pointwise bounded by the multiplicity of $T_{opt}$, so that
\begin{equation}
\label{eqn:en-T-opt-restr-fuori}
\MM\Big( T_{opt}^{restr} 
\trace  \Big( \big( \cup_{i=1}^{N^-} \overline B^-_i \big) \cup \big(\cup_{i=1}^{N^+} \overline  B^+_i\big)\Big)^c \Big) \leq \MM (T_{opt}),
\end{equation}
and by \eqref{eqn:cov-limit-3}
\begin{equation}
\label{eqn:en-T-opt-restr-dentro}
\MM\Big( T_{opt}^{restr}  \trace  \big( \cup_{i=1}^{N^\pm} \overline B^\pm_i \big) \Big)
\leq \MM\Big( T_{opt}  \trace  \big( \cup_{i=1}^{N^\pm} \overline B^\pm_i \big)\Big)
 \leq \frac{\Delta}{128}.
\end{equation}
We observe that:
\begin{equation}
\label{marginal}
\sum_{j\in I^+}\partial_-T_{opt,i,j}^{restr}(\partial B_i^-)=\sum_{j\in I^+}\partial_-T_{opt,i,j}(C_i^-)=\partial_-T_{opt}(C_i^-)=\mu^-(C_i^-),
\end{equation}
where the first equality follows because the first (resp. second) term can be seen as the total mass of the positive (resp. negative) part of the boundary of
$$\sum_{j\in I^+}\int _{\Lip} R_{(0, res(O_{B_i^-}))(\gamma)} \, d\pi_{opt, i, j} (\gamma).$$ 
This is true because, by Remark~\ref{remark:A-B-disj} and \eqref{eqn:cov-limit-4}, this formula gives a good decomposition (with the usual abuse of notation).

\smallskip
{\it Step 7: connection along the spheres.}
By Proposition~\ref{p:propr_good_dec}(1) we have $\partial_\pm T_n^{sel}\leq\partial_\pm T_n=\mu_n^\pm$. We deduce that
\begin{equation}\label{e:bordomeno}
\begin{split}
\mu_n^-(C_i^-)&\geq \partial_- T^{sel}_{n}(C_i^-)\overset{\eqref{obs1}}{=}\partial_- T^{sel,-}_{n}(C_i^-)\\
&\overset{\eqref{pezzo1}}{=} \partial_- T^{sel,-}_{n,i} (\R^d)\overset{\eqref{e:massbord}}{=}\partial_+ T^{sel,-}_{n,i} (\R^d) = \partial_+ T^{sel,-}_{n,i} (\partial B^-_i)
\end{split}
\end{equation}
and similarly
\begin{equation}\label{e:bordopiu}
\mu_n^+(C_j^+)\geq \partial_- T^{sel,+}_{n,j} (\partial B^+_j).
\end{equation}
Combining this with \eqref{eqn:choice-n-3}, it follows that, for every $i\in I^-$,
$$\partial_+ T^{sel,-}_{n,i} (\partial B^-_i)\leq(1+\e_2)\mu^-(C^-_i)\overset{\eqref{marginal}}{=}(1+\e_2) \sum_{j\in I^+}\partial_-T_{opt,i,j}^{restr}(\partial B^-_i)$$
and analogously, for every $j\in I^+$,
$$\partial_- T^{sel,+}_{n,j} (\partial B^+_j)\leq(1+\e_2)\mu^+(C^+_j)=(1+\e_2) \sum_{i\in I^-}\partial_+T_{opt,i,j}^{restr}(\partial B^+_j).$$
Hence, for every $i\in I^-$, we denote
\begin{equation}\label{alphai}
\alpha_i^-:=\frac{\partial_+ T^{sel,-}_{n,i} (\partial B^-_i)}{(1+\e_2) \sum_{j\in I^+}\partial_-T_{opt,i,j}^{restr}(\partial B^-_i)} \in [0,1]
\end{equation}
and, for every $j\in I^+$,
\begin{equation}\label{alphaj}
\alpha_j^+:=\frac{\partial_- T^{sel,+}_{n,j} (\partial B^+_j)}{(1+\e_2) \sum_{i\in I^-}\partial_+T_{opt,i,j}^{restr}(\partial B^+_j)} \in [0,1].
\end{equation}
We define 
\begin{equation}
\label{marginale1}
T^{conn, -}_{n,i} \in \TP\Big(\partial_+ T^{sel,-}_{n,i},\alpha^-_i(1+\e_2) \sum_{j\in I^+}\partial_-T_{opt,i,j}^{restr}\Big)
\end{equation}
to be the traffic path given by Lemma~\ref{irrigationspher} (supported on $\partial B_i^-$). The lemma can be applied since the two marginals in \eqref{marginale1} are supported on $\partial B_i^-$ and they have same total mass, as a consequence of \eqref{alphai}. Its cost is estimated by
\begin{equation}
\label{eqn:t-conn-i-cost}
\MM \big(T^{conn, -}_{n,i} \big) \leq 
C_{\alpha,d} \big(\partial_+ T^{sel,-}_{n,i} (\partial B^-_i) \big)^\alpha r_i^-
\leq 
C_{\alpha,d} r_i^-.
\end{equation}
Analogously, we define a traffic path
\begin{equation}
\label{marginale2}
T^{conn, +}_{n,j}\in \TP\Big(\alpha^+_j(1+\e_2) \sum_{i\in I^-}\partial_+T_{opt,i,j}^{restr}, \partial_- T^{sel,+}_{n,j}\Big),
\end{equation}
supported on $\partial B_j^+$, whose cost is again estimated by
\begin{equation}
\label{eqn:t-conn-i-cost2}
\MM \big(T^{conn, +}_{n,j} \big) \leq 
C_{\alpha,d} \big(\partial_- T^{sel,+}_{n,j} (\partial B^+_j) \big)^\alpha r_j^+
\leq 
C_{\alpha,d} r_j^+.
\end{equation}
Finally, we define the traffic paths
$$ T_n^{conn,-}:= \sum_{i=1}^{N^{-}} T_{n,i}^{conn,-} \qquad \mbox{and}\qquad T_n^{conn,+}:= \sum_{j=1}^{N^{+}} T_{n,j}^{conn,+}.$$
We denote
\begin{equation}
\label{marginale33}
\sigma^+_n:= (1+\e_2)\sum_{i=1}^{N^{-}} \alpha^-_i \sum_{j\in I^+}\partial_-T_{opt,i,j}^{restr},
\end{equation}
\begin{equation}
\label{marginale44}
\sigma^-_n:= (1+\e_2) \sum_{j=1}^{N^{+}} \alpha^+_j \sum_{i\in I^-}\partial_+T_{opt,i,j}^{restr},
\end{equation}
from \eqref{marginale1}, \eqref{marginale2}, \eqref{pezzo12} and \eqref{pezzo13}, we infer
\begin{equation}
\label{marginale3}
T^{conn, -}_{n} \in \TP(\partial_+ T^{sel,-}_{n},\sigma^+_n),\qquad \mbox{and} \qquad T^{conn, +}_{n}\in \TP( \sigma^-_n, \partial_- T^{sel,+}_{n}).
\end{equation}
Using the fact that $\pi_n \in \mathscr{P}(\Lip)$, one gets
\begin{equation}
\label{remarkmarg}
\begin{split}
\sigma^+_n(\R^d)&\overset{\eqref{marginale3},\eqref{e:massborduno}}{=} \partial_+ T^{sel,-}_{n}(\R^d)\overset{\eqref{e:massbord}}{=}\partial_- T^{sel,-}_{n}(\R^d)\\
&\overset{\eqref{obs1}}{=}\partial_- T^{sel}_{n}(\R^d)\overset{\eqref{buona-dec-boundary}}{=}\pi^{sel}_{n}(\Lip)\overset{\eqref{eqn:poca-massa-fuori-sel}}{\geq}1-\e_1.
\end{split}
\end{equation}
Using the sub-additivity of the $\alpha$-mass, we get the energy estimate
\begin{equation}
\label{eqn:energy-comp}
\MM \big(T^{conn, \pm}_{n} \big) \overset{\eqref{eqn:t-conn-i-cost}, \eqref{eqn:t-conn-i-cost2}}{\leq}  \sum_{i=1}^{N^{\pm}}
C_{\alpha,d} r_i^\pm \overset{\eqref{eqn:cov-limit-2}}{\leq} \frac{\Delta}{128}.
\end{equation}

\smallskip
{\it Step 8: bringing back the mass in excess.} Denoting 
\begin{equation}
\label{bordi3}\nu^-:=(1+\e_2) \sum_{i\in I^-, j\in I^+}\partial_-T_{opt,i,j}^{restr},\quad \mbox{and} \quad \nu^+:=(1+\e_2) \sum_{i\in I^-, j\in I^+}\partial_+T_{opt,i,j}^{restr},
\end{equation}
we get that
\begin{equation}
\label{bordi10}
(1+\e_2)T_{opt}^{restr} \in \TP(\nu^-,\nu^+).
\end{equation}
We define the two non-negative measures
\begin{equation}
\label{eqn:pezzi}
\nu^-_n:=\nu^--\sigma^+_n, \qquad \nu^+_n:=\nu^+- \sigma^-_n.
\end{equation}
Since by \eqref{alphai}, \eqref{alphaj} $\alpha_i^-, \alpha_j^+ \in [0,1]$, comparing \eqref{marginale3} with  \eqref{bordi3}, we get
\begin{equation}
\label{sottomisure}
\sigma^+_n\leq \nu^-,\quad \sigma^-_n\leq \nu^+, \quad \nu_n^-\leq\nu^-,\quad\mbox{ and }\quad \nu_n^+\leq\nu^+.
\end{equation}
We claim that
\begin{equation}
\label{claim}
\nu_n^-(\R^d)=\nu_n^+(\R^d)\leq \e_1+\e_2.
\end{equation}
Indeed we can compute
\begin{equation*}
\begin{split}
\sigma^+_n(\R^d)&\overset{\eqref{marginale3},\eqref{e:massborduno}}{=}\partial_+ T^{sel,-}_{n}(\R^d)\overset{\eqref{e:massbord}}{=}\partial_- T^{sel,-}_n(\R^d)\overset{\eqref{obs1}}{=}\partial_- T^{sel}_n(\R^d)\\
&\overset{\eqref{e:massbord}}{=}\partial_+ T^{sel}_n(\R^d)\overset{\eqref{obs2}}{=}\partial_+ T^{sel,+}_n(\R^d)\overset{\eqref{e:massbord}}{=}\partial_- T^{sel,+}_{n}(\R^d)\overset{\eqref{marginale3},\eqref{e:massborduno}}{=}\sigma^-_n(\R^d),
\end{split}
\end{equation*}
which, together with \eqref{eqn:pezzi} and the fact that $\nu^-(\R^d)=\nu^+(\R^d)$, implies $  \nu_n^-(\R^d)=\nu_n^+(\R^d)$.
Since $\sigma^+_n\leq \nu^-$, we can estimate
\begin{equation*}
\begin{split}
\nu_n^-(\R^d)&=\nu^-(\R^d)-\sigma^+_n(\R^d)\overset{\eqref{bordi3},\eqref{marginal}}{\leq}(1+\e_2)\mu^-(\R^d)-\sigma^+_n(\R^d)\\
&\overset{\eqref{remarkmarg}}{\leq}(1+\e_2)-(1-\e_1)=\e_1+\e_2,
\end{split}
\end{equation*}
getting the claim \eqref{claim}.

Therefore, by \eqref{sottomisure}, \eqref{claim} and \eqref{bordi10}, we can apply Proposition~\ref{l:small-transport} to prove the existence of a path 
\begin{equation}
\label{bordi2}
T^{back}\in\TP(\nu_n^+,\nu_n^-)
\end{equation}
 with 
\begin{equation}
\label{eqn:energy-t-back}
\Mass^\alpha(T^{back})\leq \frac{\Delta}{128},
\end{equation}
provided $\e_1$ and $\e_2$ are chosen small enough.

From \eqref{marginale3}, \eqref{bordi10}, \eqref{bordi2}, and \eqref{eqn:pezzi} we compute
\begin{equation}\label{bordi6}
\begin{split}
&\partial T_n^{conn,-}+(1+\varepsilon_2)\partial T_{opt}^{restr}+\partial T^{back}+\partial T_n^{conn,+}\\
&=\sigma^+_n-\partial_+ T_n^{sel,-}+\nu^+-\nu^-+\nu_n^--\nu_n^++\partial_- T_n^{sel,+}-\sigma^-_n\\
&=\partial_- T_n^{sel,+}-\partial_+ T_n^{sel,-}.
\end{split}
\end{equation}

\smallskip

{\it Step 9: definition of a competitor for $T_n^{sel}$.}
Eventually we define 
$$\tilde{T}_n^{sel}:=T_n^{sel,-}+T_n^{conn,-}+(1+\varepsilon_2)T_{opt}^{restr}+T^{back}+T_n^{conn,+}+T_n^{sel,+}.$$
We show that it has the same boundary of $T_n^{sel}$
\begin{equation}
\label{eqn:boundary-tilde-t-n-sel}
\partial \tilde{T}_n^{sel} = \partial T_n^{sel}.
\end{equation}
Indeed, using \eqref{bordi} and \eqref{bordi6}, we get
\begin{equation}\label{competitore}
\begin{split}
\partial \tilde{T}_n^{sel}&=\partial T_n^{sel,-}+\partial T_n^{conn,-}+(1+\varepsilon_2)\partial T_{opt}^{restr}+\partial T^{back}+\partial T_n^{conn,+}+\partial T_n^{sel,+}\\
&\overset{\eqref{bordi}, \eqref{bordi6}}{=}\partial T_n^{sel}+\partial_+ T_{n}^{sel,-}- \partial_- T_{n}^{sel,+}+\partial_- T_n^{sel,+}-\partial_+ T_n^{sel,-}=\partial T_n^{sel}.
\end{split}
\end{equation}

\smallskip

{\it Step 10: estimates on the energy of the competitor.} 
In the following we denote by $U$ the union of our two closed coverings
$$U^\pm  := \cup_{i=1}^{N^\pm} \overline B^\pm_i\qquad U := U^+ \cup U^-.$$
We claim that the competitor $\tilde T_n^{sel}$ for $T_n^{sel}$ enjoys the following estimate
\begin{equation}
\label{eqn:maestoso-competitore1}
\MM\Big( \tilde T_n^{sel}\trace  U^c \Big) \leq \MM (T_{opt}) +\frac \Delta {4}
\end{equation}
and that
\begin{equation}
\label{eqn:maestoso-competitore2}
\MM\Big( \big(\tilde T_n^{sel} - T_n^{sel, \pm}\big) \trace  U^\pm\Big) \leq \frac{\Delta}{32}.
\end{equation}

We first focus on \eqref{eqn:maestoso-competitore1}. By their definition, the currents $T_n^{conn,\pm}$, $T_n^{sel,\pm}$ are supported on the sets $ U^\pm$; hence, they are supported on sets disjoint from $U^c$. Using \eqref{eqn:en-T-opt-restr-fuori} and $C\e_2 \leq \frac{\Delta}{8}$, we can compute
\begin{equation}
\begin{split}
\MM\Big( \tilde T_n^{sel}\trace  U^c \Big) &= \MM( \big((1+\varepsilon_2)T_{opt}^{restr}+T^{back} \big)\trace  U^c )\\
&
\leq (1+\varepsilon_2)^\alpha \MM (T_{opt}^{restr}\trace  U^c )+ \MM(T^{back}) \\
&\leq  \MM (T_{opt}) +C\e_2 +\frac \Delta {128}\leq  \MM (T_{opt})+\frac \Delta {4}.
\end{split}
\end{equation}
To prove \eqref{eqn:maestoso-competitore2} (we show it for the choice $\pm = -$), it is enough to show that
\begin{equation}
\label{eqn:maestoso-competitore1-}
\MM\big( \big(\tilde T_n^{sel} - T_n^{sel, -}\big)\trace U^- \big) \leq \frac \Delta {32}.
\end{equation}
Using again that the currents $T_n^{conn,+}$, $T_n^{sel,+}$ are supported on the set $ U^+$, we estimate, by the subadditivity of the $\alpha$-mass,
\begin{equation*}
\begin{split}
\MM\Big( \big(&\tilde T_n^{sel} - T_n^{sel, -}\big)\trace U^- \Big) \leq \MM \big(T_n^{conn,-}\trace U^-+(1+\varepsilon_2)T_{opt}^{restr}\trace U^-+T^{back}\trace U^- \big)\\
&\leq \MM \big(T_n^{conn,-}\big)+(1+\varepsilon_2)^\alpha \MM \big(T_{opt}^{restr}\trace U^-)+\MM \big(T^{back} \big)\leq \frac \Delta {32},
\end{split}
\end{equation*}
where in the last inequality we used $\e_2 \leq 1/4$, \eqref{eqn:en-T-opt-restr-dentro}, \eqref{eqn:energy-comp}, \eqref{eqn:energy-t-back}. This concludes the proof of \eqref{eqn:maestoso-competitore2}.

\smallskip

{\it Step 11: definition of a competitor for $T_n$.}
We define $\overline{T}_n:= \tilde T_n^{sel} + T_n -  T_n^{sel} $ as a competitor for the $\alpha$-mass optimizer $T_n$, with the aim to prove that the former has less $\alpha$-mass than the latter.
Indeed, by \eqref{competitore}, $\partial  \overline{T}_n =\partial  T_n$ and consequently
$$\tilde T_n^{sel} + T_n -  T_n^{sel} \in \TP(\mu^-_n,\mu^+_n).$$
 We split its energy as
\begin{equation}
\label{eqn:est-in-and-out}
\MM ( \overline{T}_n ) =
\MM \big(\overline{T}_n \trace U \big)
+
\MM \big(\overline{T}_n \trace U^c \big)
\end{equation}
For the first term, the additivity of the $\alpha$-mass on disjoint sets gives
\begin{equation}
\label{eqn:correction}
\MM \big(\overline{T}_n \trace U \big)
= \MM \big(\big( \tilde T_n^{sel} + T_n -  T_n^{sel}\big) \trace U^+ \big) + \MM \big(\big( \tilde T_n^{sel} + T_n -  T_n^{sel}\big) \trace U^- \big).
\end{equation}
We estimate each term by means of  \eqref{eqn:maestoso-competitore2}; since the proof is the same, we do it for the first term in the right-hand side
\begin{equation}
\label{eqn:est-in1}
\begin{split}
&\MM \big(\big( \tilde T_n^{sel} + T_n -  T_n^{sel}\big) \trace U^- \big)
\\ &\leq
 \MM\big( \big(\tilde T_n^{sel} - T_n^{sel,-}
 \big) \trace  U^- \big) + \MM\big( \big(T_n^{sel,-}+T_n-  T_n^{sel} \big) \trace U^-\big) 
 \\&
 \leq \frac{\Delta}{32}+ \MM\big( \big( T_n^{sel,-}+T_n-  T_n^{sel} \big) \trace U^-\big)
\end{split}
\end{equation}

The latter can be estimated by noticing that it is a ``part of an optimum'' with 
\begin{equation}
\label{eqn:inside1}
\MM\big( \big(T_n^{sel,-}+T_n-  T_n^{sel}\big) \trace U^-\big) \leq 
\MM\big(T_n \trace U^-\big).
\end{equation}
Indeed we apply Proposition~\ref{p:propr_good_dec_2} with $T= T_n^{sel}$ and $\tilde T=T_n^{sel,-}$, to obtain that
$$T_n^{sel}-T_n^{sel,-}=\beta T_n^{sel}, \qquad \mbox{where } \beta:\R^d \to [0,1],$$
and Proposition~\ref{p:propr_good_dec}(3) with $T=T_n$ and $T'=T_n^{sel}$, to obtain that $T_n^{sel} = \varphi T_n$, where $\varphi:\R^d \to [0,1]$, and therefore
$$T_n-(T_n^{sel}-T_n^{sel,-})=T_n-\beta T_n^{sel}=(1-\varphi\beta) T_n, \qquad \mbox{where } [1-\varphi \beta]:\R^d \to [0,1].$$
We can conclude that
$$\MM\big( \big(T_n^{sel,-}+T_n-  T_n^{sel}\big) \trace U^-\big) \leq 
\sup_{x \in \R^d}\{1-\beta(x)\varphi(x)\}^\alpha\MM\big(T_n \trace U^-\big)\leq \MM\big(T_n \trace U^-\big),$$
which is exactly \eqref{eqn:inside1}.

Putting together \eqref{eqn:correction}, \eqref{eqn:est-in1}, \eqref{eqn:inside1}, we get an estimate for the first term in the right-hand side of \eqref{eqn:est-in-and-out}
\begin{equation}
\label{eqn:first-term-est}
\MM \big(\overline{T}_n \trace U \big)
\leq \frac{\Delta}{16}+ \MM\big(T_n \trace U\big).
\end{equation}

The second term in \eqref{eqn:est-in-and-out} can be instead estimated through the sub-additivity of the $\alpha$-mass, the energy bound on the competitor $\tilde T_n^{sel}$ in \eqref{eqn:maestoso-competitore1}, and the energy gap in \eqref{e:dropmassa}
\begin{equation}
\label{eqn:outside1}
\begin{split}
\MM \big(\overline{T}_n \trace U^c \big)& \leq 
\MM \big( \tilde T_n^{sel} \trace U^c \big)
+
\MM \big(\big( T_n -  T_n^{sel}\big) \trace U^c \big)
\\&
\leq \MM (T_{opt}) +\frac \Delta {4}
+
\MM \big(\big( T_n -  T_n^{sel}\big) \trace U^c \big)
\\&
\leq \MM (T) -\frac {3\Delta} {4}
+
\MM \big(\big( T_n -  T_n^{sel}\big) \trace U^c \big).
\end{split}
\end{equation}
We fix $\delta$ obtained from Lemma~\ref{high_multiplicity} with the choices $A=U^c$ and $\e = \Delta/8$. 
The conclusion of the lemma holds for $T'=T_n^{sel}$, provided we add the following further constraints on $\e_1$ and $\e_2$:
\begin{equation}\label{scelte_piccolezza}
\e_2 \leq \frac \delta 2, \qquad C \e_1^{1-\alpha} \leq \frac\delta 2, \qquad \e_1 \leq \frac \delta {4}, \qquad 16\e_1^\alpha C \leq \delta^\alpha\Delta.
\end{equation}
By sub-additivity of flat norm, \eqref{eqn:choice-n-2} and \eqref{eqn:flat-sel}, we find that 
$$\Flat(T_n^{sel}-T) \leq \Flat(T_n-T) + \Flat(T_n^{sel}-T_n) \leq \e_2+ C \e_1^{1-\alpha} \leq \delta.
$$
Using the previous inequality and Lemma~\ref{high_multiplicity}, 
\begin{equation*}
\begin{split}
\MM (T) &= \MM (T\trace U^c)+ \MM (T\trace U) \\
&\leq \MM \big(T_n^{sel} \trace \big(U^c \cap \{ \theta^{sel}_n> \delta \}\big)\big)+ \MM (T\trace U) +\frac {\Delta} {8}\\
&\overset{\eqref{eqn:cov-limit-3}}{\leq} \MM \big(T_n^{sel} \trace \big(U^c \cap \{ \theta^{sel}_n> \delta \}\big)\big)+ \frac {\Delta} {4}.
\end{split}
\end{equation*}
Substituting the previous inequality in \eqref{eqn:outside1}, we find
\begin{equation}
\label{eqn:outside2}
\begin{split}
\MM \big(\overline{T}_n  \trace U^c \big)\leq \MM \big(T_n^{sel} \trace \big(U^c \cap \{ \theta^{sel}_n> \delta \}\big)\big) -\frac {\Delta} {2}
+
\MM \big(\big( T_n -  T_n^{sel}\big) \trace U^c \big).
\end{split}
\end{equation}
We claim that it is possible to apply Lemma~\ref{lemma:quasiadditive} with $T_1=\big( T_n -  T_n^{sel}\big) \trace U^c $, $T_2=T_n^{sel} \trace \big(U^c \cap \{ \theta^{sel}_n> \delta \}\big)$, and $\e=\e_1/ \delta$. Indeed, by \eqref{eqn:mult} $T_n-T_n^{sel}$ has multiplicity less than or equal to $\e_1$ and by \eqref{scelte_piccolezza} we have $\delta \geq 4\e_1$.
Consequently, by \eqref{eqn:outside2}
\begin{equation}
\label{eqn:outside3}
\begin{split}
&\MM \big(\overline{T}_n  \trace U^c \big)\\
&\leq  \Big(1+4\Big( \frac{\e_1}{\delta} \Big)^{\alpha} \Big) \MM \big(T_n^{sel} \trace \big(U^c \cap \{ \theta^{sel}_n> \delta \}\big)+ \big( T_n -  T_n^{sel}\big) \trace U^c\big) -\frac {\Delta} {2}\\
&=\Big(1+4\Big( \frac{\e_1}{\delta} \Big)^{\alpha}\Big) \MM \big(\beta T_n \trace U^c\big) -\frac {\Delta} {2},
\end{split}
\end{equation}
where $\beta: \R^d \to [0,1]$.
Since by hypothesis $\MM \big(T_n  \big) \leq C$, using \eqref{scelte_piccolezza}, we find that 
\begin{equation}
\label{eqn:outside4}
\begin{split}
\MM \big(\overline{T}_n \trace U^c \big) \leq \MM \big(T_n \trace U^c \big) +4\Big( \frac{\e_1}{\delta}\Big)^\alpha C -\frac {\Delta} {2}\overset{\eqref{scelte_piccolezza}}{\leq} \MM \big(T_n \trace U^c \big) -\frac {\Delta} {4}.
\end{split}
\end{equation}

Putting together \eqref{eqn:first-term-est} and \eqref{eqn:outside4}, we find that
$$\MM ( \overline{T}_n )
\leq   \MM \big(T_n \trace U \big) +\MM \big(T_n \trace U^c \big) -\frac {\Delta} {8} < \MM (T_n),$$
which is a contradiction to the optimality of $T_n$.

\subsection*{Acknowledgements}
M. C. acknowledges the support of Dr. Max R\"ossler, of the Walter Haefner Foundation and of the ETH Z\"urich Foundation. A. D.R. is supported by SNF 159403 {\it Regularity questions in geometric measure theory}. A. M. is supported by the ERC-grant 306247 {\it Regularity of area minimizing
currents}.

%
%

%
%

\vskip .5 cm

{\parindent = 0 pt\begin{footnotesize}

M.C, A.D.R. \and A.M.
\\
Institut f\"ur Mathematik,
Mathematisch-naturwissenschaftliche Fakult\"at,
Universit\"at Z\"urich\\
Winterthurerstrasse 190,
CH-8057 Z\"urich,
Switzerland
\\
e-mail M.C.: {\tt maria.colombo@math.uzh.ch}\\
e-mail A.D.R.: {\tt antonio.derosa@math.uzh.ch}\\
e-mail A.M.: {\tt andrea.marchese@math.uzh.ch}

\end{footnotesize}
}


\begin{thebibliography}{CDRMS}

\bibitem[AK00]{Ambrosio2000}
L.~Ambrosio and B.~Kirchheim.
\newblock Currents in metric spaces.
\newblock {\em Acta Math.}, 185\penalty0 (1):\penalty0 1--80, 2000.

\bibitem[AM16]{AlbMar}
G.~Alberti and A.~Marchese.
\newblock On the differentiability of {L}ipschitz functions with respect to
  measures in the {E}uclidean space.
\newblock {\em Geom. Funct. Anal.}, 26\penalty0 (1):\penalty0 1--66, 2016.

\bibitem[BCM05]{BCM1}
M.~Bernot, V.~Caselles, and J.-M. Morel.
\newblock {Traffic Plans}.
\newblock {\em Publicacions Matematiques}, 49\penalty0 (2):\penalty0 417--451,
  2005.

\bibitem[BCM08]{BeCaMo}
M.~Bernot, V.~Caselles, and J.-M. Morel.
\newblock The structure of branched transportation networks.
\newblock {\em Calc. Var. Partial Differential Equations}, 32\penalty0
  (3):\penalty0 279--317, 2008.

\bibitem[BCM09]{BCM}
M.~Bernot, V.~Caselles, and J.-M. Morel.
\newblock {\em {Optimal transportation networks. Models and theory.}}
\newblock {Lecture Notes in Mathematics, 1955}. Springer, Berlin, 2009.

\bibitem[BOO16]{BOO}
M.~Bonafini, G.~Orlandi, and E.~Oudet.
\newblock {Variational approximation of functionals defined on 1-dimensional connected sets: the planar case}.
\newblock 2016.
\newblock preprint https://arxiv.org/abs/1610.03839.

\bibitem[BLS15]{BLS}
M.~Bonnivard, A.~Lemenant, and F.~Santambrogio.
\newblock {Approximation of length minimization problems among compact connected sets}.
\newblock {\em SIAM J. Math. Anal.}, 47\penalty0 (2):\penalty0 1489--1529,
  2015.

\bibitem[BRW]{BranRS}
A.~Brancolini, C.~Rossmanithand, and B.~Wirth.
\newblock Optimal micropatterns in 2d transport networks and their relation to
  image inpainting.
\newblock Preprint, 2016.

\bibitem[BS14]{BraSol}
A.~Brancolini and S.~Solimini.
\newblock Fractal regularity results on optimal irrigation patterns.
\newblock {\em J. Math. Pures Appl. (9)}, 102\penalty0 (5):\penalty0 854--890,
  2014.

\bibitem[BW16]{BranK}
A.~Brancolini and B.~Wirth.
\newblock Equivalent formulations for the branched transport and urban planning
  problems.
\newblock {\em J. Math. Pures Appl.}, 106\penalty0
  (4):\penalty0 695--724, 2016.

\bibitem[BBS11]{brabutsan}
L.~Brasco, G.~Buttazzo, and F.~Santambrogio.
\newblock {A Benamou-Brenier approach to branched transport}.
\newblock {\em SIAM J. Math. Anal.}, 43\penalty0 (2):\penalty0 1023--1040,
  2011.
  
  \bibitem[BBS06]{MR2250166}
A.~Brancolini, G.~Buttazzo, and F.~Santambrogio.
\newblock Path functionals over {W}asserstein spaces.
\newblock {\em J. Eur. Math. Soc. (JEMS)}, 8\penalty0 (3):\penalty0 415--434,
  2006.

\bibitem[CDRM]{CDRM2}
M.~Colombo, A.~De~Rosa, and A.~Marchese.
\newblock On the stability of branched transportation: Lagrange formulation.
\newblock Paper in preparation.

\bibitem[CDRMS]{flat-relax}
M.~Colombo, A.~De~Rosa, A.~Marchese, and S.~Stuvard.
\newblock A note on the relaxation of functionals defined on flat chains.
\newblock Paper in preparation.

\bibitem[CMF16]{BCF}
A.~Chambolle, B.~Merlet, and L.~Ferrari.
\newblock {A simple phase-field approximation of the Steiner problem in
  dimension two}.
\newblock 2016.
\newblock Available on ArXiv: https://arxiv.org/abs/1609.00519.

\bibitem[DH03]{depauwhardt}
T.~{De Pauw} and R.~Hardt.
\newblock {Size minimization and approximating problems}.
\newblock {\em Calc. Var. Partial Differential Equations}, 17\penalty0
  (4):\penalty0 405--442, 2003.

\bibitem[DS07a]{DevSolElementary}
G.~Devillanova and S.~Solimini.
\newblock Elementary properties of optimal irrigation patterns.
\newblock {\em Calc. Var. Partial Differential Equations}, 28\penalty0
  (3):\penalty0 317--349, 2007.

\bibitem[DS07b]{DevSol}
G.~Devillanova and S.~Solimini.
\newblock {On the dimension of an irrigable measure}.
\newblock {\em Rendiconti del Seminario Matematico della Università di
  Padova}, 117:\penalty0 1--49, 2007.

\bibitem[Fed69]{FedererBOOK}
H.~Federer.
\newblock {\em {Geometric measure theory}}, volume 153 of {\em {Die Grundlehren
  der mathematischen Wissenschaften}}.
\newblock Springer-Verlag New York Inc., New York, 1969.
\newblock xiv+676 pp.

\bibitem[Gil67]{Gilbert}
E.~N. Gilbert.
\newblock {Minimum cost communication networks}.
\newblock {\em Bell System Tech. J.}, 46:\penalty0 2209--2227, 1967.

\bibitem[KP08]{KrantzParks}
S.~G. Krantz and H.~R. Parks.
\newblock {\em {Geometric integration theory}}, volume~80 of {\em
  {Cornerstones}}.
\newblock Birkh{\"a}user Boston, Inc., Boston, MA, 2008.
\newblock xvi+339 pp.

\bibitem[MM16a]{marmass}
A.~Marchese and A.~Massaccesi.
\newblock An optimal irrigation network with infinitely many branching points.
\newblock {\em ESAIM: COCV}, 22\penalty0 (2):\penalty0 543--561, 2016.

\bibitem[MM16b]{marmass1}
A.~Marchese and A.~Massaccesi.
\newblock The Steiner tree problem revisited through rectifiable $g$-currents.
\newblock {\em Advances in Calculus of Variations}, 9\penalty0 (1):\penalty0
  19--39, 2016.

\bibitem[MOV16]{massoubo}
A.~Massaccesi, E.~Oudet, and B.~Velichkov.
\newblock Numerical calibration of Steiner trees.
\newblock 2016.
\newblock Available online: http://cvgmt.sns.it/paper/2995/.

\bibitem[MS10]{morsant}
J.-M. Morel and F.~Santambrogio.
\newblock The regularity of optimal irrigation patterns.
\newblock {\em Arch. Ration. Mech. Anal.}, 195\penalty0 (2):\penalty0 499--531,
  2010.

\bibitem[MSM03]{MSM}
F.~Maddalena, S.~Solimini, and J.M. Morel.
\newblock {A variational model of irrigation patterns}.
\newblock {\em Interfaces and Free Boundaries}, 5:\penalty0 391--416, 2003.

\bibitem[OS11]{OuSan}
E.~Oudet and F.~Santambrogio.
\newblock {A Modica-Mortola approximation for branched transport and
  applications}.
\newblock {\em Arch. Ration. Mech. Anal.}, 201\penalty0 (1):\penalty0 115--142,
  2011.

\bibitem[Peg]{Pegon}
P.~Pegon.
\newblock On the lagrangian branched transport model and the equivalence with
  its eulerian formulation.
\newblock Preprint, 2016.

\bibitem[PS06]{PaoliniStepanov}
E.~Paolini and E.~Stepanov.
\newblock Optimal transportation networks as flat chains.
\newblock {\em Interfaces Free Bound.}, 8\penalty0 (4):\penalty0 393--436,
  2006.

\bibitem[PS12]{PaoliniStepanov1}
E.~Paolini and E.~Stepanov.
\newblock Decomposition of acyclic normal currents in a metric space.
\newblock {\em J. Funct. Anal.}, 263\penalty0 (11):\penalty0 3358--3390, 2012.

\bibitem[Sim83]{SimonLN}
L.~Simon.
\newblock {Lectures on geometric measure theory}, volume~3 of {\em
  {Proceedings of the Centre for Mathematical Analysis}}.
\newblock Australian National University, Centre for Mathematical Analysis,
  Canberra, 1983.
\newblock vii+272 pp.

\bibitem[Smi93]{Smirnov93}
S.~K. Smirnov.
\newblock {Decomposition of solenoidal vector charges into elementary
  solenoids, and the structure of normal one-dimensional flows}.
\newblock {\em Algebra i Analiz}, 5:\penalty0 206--238, 1993.

\bibitem[Whi99a]{White1999}
B.~White.
\newblock The deformation theorem for flat chains.
\newblock {\em Acta Math.}, 183\penalty0 (2):\penalty0 255--271, 1999.

\bibitem[Whi99b]{white_rect}
B.~White.
\newblock Rectifiability of flat chains.
\newblock {\em Ann. of Math. (2)}, 150\penalty0 (1):\penalty0 165--184, 1999.

\bibitem[Xia03]{Xia}
Q.~Xia.
\newblock {Optimal paths related to transport problems}.
\newblock {\em Commun. Contemp. Math.}, 5:\penalty0 251--279, 2003.

\bibitem[Xia04]{xia2}
Q.~Xia.
\newblock {Interior regularity of optimal transport paths}.
\newblock {\em Calc. Var.}, 20:\penalty0 283--299, 2004.

\bibitem[Xia11]{xiaBoundary}
Q.~Xia.
\newblock Boundary regularity of optimal transport paths.
\newblock {\em Adv. Calc. Var.}, 4\penalty0 (2):\penalty0 153--174, 2011.

\end{thebibliography}
\end{document}